\documentclass[12t,a4paper]{amsart}
\usepackage{amsmath,amssymb,amsfonts, float}
\usepackage[all]{xy}
\usepackage{caption}
\usepackage{enumerate}
\usepackage{mathpazo}
\usepackage{a4wide}
\usepackage[normalem]{ulem}

\usepackage{bm}% bold math
\usepackage{graphicx}
\usepackage[breaklinks=true]{hyperref}

\usepackage{xcolor}
\usepackage{multicol}
\usepackage{tabularx}
\usepackage{comment}

\usepackage{hyperref}
\usepackage[capitalize]{cleveref}

\usepackage[normalem]{ulem}

\newcommand{\tung}[1]{{\color{red}#1}}

\newtheorem{thm}{Theorem}[section] 
\newtheorem*{thm*}{Theorem} 
\newtheorem{prop}[thm]{Proposition}
\newtheorem{lem}[thm]{Lemma}
\newtheorem{cor}[thm]{Corollary}

\theoremstyle{definition}
\newtheorem{definition}[thm]{Definition}
\newtheorem{expl}[thm]{Example}

\newtheorem{rem}[thm]{Remark}

\DeclareMathOperator{\C}{\mathbb{C}}
\DeclareMathOperator{\Z}{\mathbb{Z}}
\DeclareMathOperator{\N}{\mathbb{N}}

\DeclareMathOperator{\Q}{\mathbb{Q}}

\DeclareMathOperator{\Gal}{\text{Gal}}

\DeclareMathOperator{\supp}{{\rm Supp}}
\DeclareMathOperator{\Div}{\text{Div}}

\DeclareMathOperator{\Supp}{\text{Supp}}

\newcommand{\Ann}{{\rm Ann}}
\newcommand{\odd}{{\rm odd}}

    \DeclareFontFamily{U}{wncy}{}
    \DeclareFontShape{U}{wncy}{m}{n}{<->wncyr10}{}
    \DeclareSymbolFont{mcy}{U}{wncy}{m}{n}
    \DeclareMathSymbol{\Sha}{\mathord}{mcy}{"58}

\numberwithin{equation}{section}

\DeclareSymbolFont{bbold}{U}{bbold}{m}{n}
\DeclareSymbolFontAlphabet{\mathbbold}{bbold}

\begin{document}
\title{Cyclotomic factors of rational necklace functions}
\author{Nguyen Cao Minh, Nguyen Vu Hoang Minh, Dung Nguyen \\ Tung T. Nguyen,  Nguyen Duy Tan, Duong Tran}

\address{Department of Mathematics and Informatics, Ho Chi Minh City University of Education, Ho Chi Minh City, Vietnam}
\email{nguyencaominhtanthoi@gmail.com}

\address{Faculty of Mathematics and Computer Science, Ho Chi Minh City University of Science, Ho Chi Minh City, Vietnam}
\email{nvhminh2004@gmail.com}

 \address{Department of Mathematics, Elmhurst University, Elmhurst, Illinois, USA}
 \email{dnguy9448@365.elmhurst.edu}

 \address{Department of Mathematics, Elmhurst University, Elmhurst, Illinois, USA}
 \email{tung.nguyen@elmhurst.edu}
 
  \address{Faculty of Mathematics and Informatics, Hanoi University of Science and Technology, 1 Dai Co Viet Road, Hanoi, Vietnam } 
\email{tan.nguyenduy@hust.edu.vn}

\address{Michigan State University, USA}
\email{tdkduonggg@gmail.com}

\thanks{Our group is supported by the Vietnam Institute for Advanced Study in Mathematics (VIASM) through the REU 2025 (Research Experience for Undergraduates) program. TTN is partially supported by an AMS-Simons Travel Grant. NDT is partially supported by the Vietnam National
Foundation for Science and Technology Development (NAFOSTED) under grant number 101.04-2023.21}
\keywords{Necklace functions, cyclotomic polynomials, Mahler algebras, Galois theory}
\subjclass[2020]{Primary 11C08, 11Y05, 05E16 }

\begin{abstract}
Necklace polynomials arise in various fields of mathematics, including combinatorics, Lie theory, and Galois theory over finite fields. Their arithmetic properties have been extensively studied in recent years. In this article, we introduce a new class of rational necklace functions that unifies two well-studied families of polynomials: necklace polynomials and Fekete polynomials. We describe several ways in which cyclotomic polynomials appear as factors of these rational necklace functions. Our results unify and generalize various earlier work  on necklace polynomials and on Fekete polynomials. In particular, we describe a surprising phenomenon in which certain Galois groups play a hidden role in the appearance of new cyclotomic factors that are not covered by these previous works.

\end{abstract}

\maketitle
\tableofcontents

\section{Introduction}
The $n$-th necklace polynomial $M_n$ is defined as 
\[ M_n(x)=\frac{1}{n} \sum_{d \mid n} \mu(d)x^{n/{d}},\]
where $\mu$ is the Möbius function. These polynomials arise in multiple areas of mathematics. In combinatorics, they count the number of distinct necklaces with $n$ beads, each colored from $x$ available colors, up to rotational symmetry (see \cite{moreau1872permutations}). In field theory, necklace polynomials give the number of monic irreducible polynomials of degree $n$ over a finite field with $x$ elements (see \cite{chebolu2011counting, ireland1990classical} for different approaches to this statement). In algebra, they describe the dimension of the degree-$n$ component of a free Jordan algebra (see \cite{metropolis1983witt}).

While extensive literature explores the combinatorial aspects of these polynomials, their arithmetic properties are less studied. Hyde initiated a systematic study of their factorization in \cite{hyde2022cyclotomic}. There, he analyzed cyclotomic factors of these polynomials using character theory and ring-theoretic properties of the so-called Mahler algebras. A similar approach is also used in \cite{Dynatomic2022} to study factors of generalized dynatomic polynomials.

Our interest in necklace polynomials stems from a seemingly  unrelated class of polynomials. More precisely, in \cite{chidambaram2023fekete}, a subset of the authors study the arithmetic of the $n$-th Fekete polynomials defined as
\begin{align*}
\label{def:Fekete}
    F_n(x)= \sum_{\substack{0 \leq a \leq n-1\\ \gcd(a,n)=1}} x^a.
\end{align*}
These polynomials are a special case of a wider class of polynomials associated with Dirichlet characters and they appear naturally in the theory of $L$-functions (see \cite{baker1990oscillations, conrey2000zeros, minavc2023arithmetic}). It is also interesting to remark that the values of $F_n$ at roots of unity have an elegant explicit description by the theory of Ramanujan sums (see \cite{fowler2014ramanujan, ramanujan1918certain}). Furthermore, these sums are directly related to the spectra of certain associated gcd-graphs (see \cite{klotz2007some, nguyen2026gcd}). While the Fekete polynomial $F_n(x)$ looks quite different from the necklace polynomial $M_n(x)$, it turns out that up to a minor modification,  they both satisfy an almost identical recursive formula. More precisely, we show in \cite[Proposition 2.17]{chidambaram2023fekete} that if $n$ is square-free, then
\[ \tilde{F}_n(x)=\sum_{d\mid n}\mu(d)\dfrac{x^d}{1-x^d} = \mu(n)\sum_{d\mid n}\mu(d)\dfrac{x^{{n}/{d}}}{1-x^{n/d}} , \]
where $\tilde{F}_n(x)= \dfrac{F_n(x)}{1-x^n}$. In \cite{chidambaram2023fekete}, using a combinatorial argument, we find various cyclotomic factors of $F_n$ (see \cite[Theorem 2.19]{chidambaram2023fekete}). Our argument, while does not use the Mahler algebra and appears to be different from Hyde's, is similar in spirit. Although \cite[Theorem 2.19]{chidambaram2023fekete} explains most of the cyclotomic factors of $F_n$, our numerical data shows that some cannot be explained by it.

In this article, we introduce a theory of rational necklace functions, which provides a general framework for both necklace and Fekete polynomials. More precisely, if $f \in \Q(x)$ is a rational function, we define the $n$-th necklace rational function of $f$ as 
\[ M_n(f) = \sum_{d|n} \mu(d) f(x^d) = \psi_n(f), \]
Here $\psi_n = \sum_{d \mid n} \mu(d)d$ is an element in the Mahler algebra $\Z[\Z^{\circ}]$ that we will formally introduce later. We remark that, for simplicity, we normalize $M_n(f)$; this will not affect our study of its factors. With this normalization, $\tilde{F}_n(x)$ is nothing but $M_n\left(\dfrac{x}{1-x} \right)$. Similarly, if $n$ is squarefree, then up to a factor of $\mu(n)n$, our definition recovers the definition of the classical necklace polynomial when $f=x.$

In general, $M_n(f)$ is a rational function, so we must be precise about what we mean by a factor. We say that $\Phi_d$ is a factor of a rational function $F \in \Q(x)$ if $ F=\frac{P}{Q}$ where  $P,Q\in\mathbb{Q}[x],\ \gcd(P,Q)=1$
and $\Phi_d\mid P$. Equivalently—since $\Phi_d$ is irreducible in $\Q(x)$—$\Phi_d$ is a factor of $F$ if and only if $F(\zeta_d)=0$.

Utilizing this general framework, we first explain a theorem that not only generalizes but also offers a conceptual explanation for the main results in \cite{chidambaram2023fekete, hyde2022cyclotomic} about cyclotomic factors of necklace and Fekete polynomials. More precisely, we show that various cyclotomic factors of a rational necklace functions could come from two sources: the \textit{congruence property} of the element $\psi_n$ described above, the   \textit{extra symmetries} of the original rational function $f$; and their interactions. We remark that while the second property does not hold for classical necklace polynomials, it does hold for the class of Fekete polynomials. More precisely, the rational function $f=\dfrac{x}{1-x}$ associated with Fekete polynomials has the property that 
\[ f(x)+f(1/x)=-1. \] 
This observation is exploited in \cite[Theorem 2.19]{chidambaram2023fekete}) and it provides a conceptual explanation for why, empirically, Fekete polynomials often possess more cyclotomic factors than the classical necklace polynomials. Second, using the Mahler algebras, we find a completely new source of cyclotomic factors for $M_n(f)$ which are  missing from both \cite{chidambaram2023fekete,hyde2022cyclotomic}. More precisely, we discover a \textit{lowering field of definition} phenomenon which arises from the Galois group of $\Q(\zeta_d)$ and its interaction with $\psi_n$ and the symmetries of $f.$ In this article, we describe precisely when this phenomenon happens.

Our approach is inspired by the strategy outlined in \cite{hyde2019polynomial, hyde2022cyclotomic} and \cite{Dynatomic2022}, which leverages ring-theoretic properties of certain Mahler algebras. In addition to using these algebras to study cyclotomic factors of rational necklace functions, we describe further properties of the algebras and their modules that may be of independent interest. We hope this approach will open new avenues for studying other necklace-like functions, such as the dynatomic polynomials of \cite{Dynatomic2022}.

{We state here two prototypical theorems concerning the relationships among the symmetries of $f$, the congruence properties of $\psi_n$, and the cyclotomic factors of $M_n(f)$. For notation and additional results, we refer the reader to the main text.

\begin{thm} (See \cref{prop:0mod-rational}) \label{intro1}
    Let $\alpha\in  \Z[\Z^\circ]$, $d>1$ and $f(x)$ a rational function. Suppose that $\phi_d(\alpha)=0$ and $f(x)$ is defined at $\zeta_d^m$, for every $m \in \supp(\alpha)$. Then $\Phi_d$ is a factor of $\alpha f$.
\end{thm}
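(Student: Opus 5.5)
The plan is to evaluate $\alpha f$ directly at $\zeta_d$ and show that this value is $0$. By the criterion recalled in the introduction, $\Phi_d$ is a factor of a rational function $F$ exactly when $F(\zeta_d)=0$, so this suffices. I take $\Z[\Z^\circ]$ to be the monoid algebra of the positive integers under multiplication, acting on $\Q(x)$ by $[m]f(x)=f(x^m)$. I take $\phi_d$ to be the ring map induced by reducing $m$ modulo $d$, landing in the monoid algebra of the multiplicative monoid $\Z/d\Z$. If the paper's $\phi_d$ is a coarser quotient, the same argument should go through after replacing residues by the corresponding classes.

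Write $\alpha=\sum_{m\in\supp(\alpha)} c_m [m]$ with $c_m\in\Z$, so that $\alpha f(x)=\sum_m c_m f(x^m)$. The hypothesis that $f$ is defined at $\zeta_d^m$ for every $m\in\supp(\alpha)$ makes each summand $f(x^m)$ regular at $x=\zeta_d$. Hence $\alpha f$ is regular at $\zeta_d$, and
\[ (\alpha f)(\zeta_d)=\sum_{m} c_m f(\zeta_d^m). \]
This point needs a little care. A rational function that is regular at $\zeta_d$ and vanishes there has $\Phi_d$ dividing the numerator of its reduced form, because $\zeta_d$ cannot be a pole. So the criterion really does apply, and no cancellation subtleties arise.

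Next, group the sum by residue classes. Since $\zeta_d^m$ depends only on $m \bmod d$, we get
\[ (\alpha f)(\zeta_d)=\sum_{r\in \Z/d\Z}\Big(\sum_{m\equiv r \ (d)} c_m\Big) f(\zeta_d^{r}). \]
Here $r$ only runs over residues that occur in $\supp(\alpha)$, so every value $f(\zeta_d^r)$ appearing is defined. The inner sums are exactly the coefficients of $\phi_d(\alpha)$ in the basis $\{[r]\}$. The hypothesis $\phi_d(\alpha)=0$ therefore forces every inner sum to vanish, which gives $(\alpha f)(\zeta_d)=0$.

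The only real obstacle is matching the paper's precise definition of $\phi_d$. If $\phi_d$ identifies residues further, for instance $r$ with $-r$, or lands in a quotient adapted to $\zeta_d$, then the grouping must be done along the fibres of that map. In that case I would check that $f(\zeta_d^m)$ is constant on each fibre for the $f$ at hand. For the plain reduction map described above this is automatic, and the argument is complete.
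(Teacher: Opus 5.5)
Your proof is correct and follows the paper's argument: evaluate $\alpha f$ at $\zeta_d$ termwise as $\sum_m c_m f(\zeta_d^m)$, regroup by residue classes modulo $d$, and note that each class coefficient is a coefficient of $\phi_d(\alpha)$, hence zero. One small correction: $\Z[\Z^\circ]$ is the monoid algebra of the nonzero integers, not the positive ones, and $\phi_d$ is exactly the reduction map you guessed. Your argument still applies verbatim to negative exponents, since $\zeta_d^m$ depends only on $m \bmod d$ for every integer $m$.
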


\begin{thm} (See \cref{thm:mahler-congruence_revised})
   % Let $I\subset R=\Z[\Z^{\circ}]$ be an ideal and  $f\in \Q(x)$ with $I\subset  \Ann_{R}(f)$. Let $\alpha \in \Z[\Z^{\circ}]$.
   Let $f\in \Q(x)$ and $\alpha\in R=\Z[\Z^{\circ}]$.
    Suppose there exists $\beta\in \Ann_R(f)$ such that $\phi_d(\alpha)=\phi_d(\beta)$ and $f$ is defined at $\zeta_d^a$ for each $a \in \supp(\alpha)\cup \supp(\beta)$. Then $\Phi_d$ is a factor of $\alpha f$. 
\end{thm}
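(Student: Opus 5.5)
Set $\gamma=\alpha-\beta$. Then $\phi_d(\gamma)=\phi_d(\alpha)-\phi_d(\beta)=0$, provided $\phi_d$ is additive, i.e.\ a ring (or at least group) homomorphism $R=\Z[\Z^{\circ}]\to \Z[(\Z/d\Z)]$ (or to the appropriate quotient). This is how I read the definition used in \cref{intro1}: $\phi_d$ sends the basis element $m$ to the class of $m$ modulo $d$. Also $\supp(\gamma)\subseteq \supp(\alpha)\cup\supp(\beta)$, so $f$ is defined at $\zeta_d^m$ for every $m\in\supp(\gamma)$.

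Step one is to apply \cref{intro1} (i.e.\ \cref{prop:0mod-rational}) to $\gamma$ and $f$. This shows that $\Phi_d$ is a factor of $\gamma f$, equivalently $(\gamma f)(\zeta_d)=0$. Concretely, writing $\gamma=\sum c_m m$, we have $(\gamma f)(\zeta_d)=\sum_m c_m f(\zeta_d^m)$. Every term is defined by hypothesis. The sum vanishes because, grouping the $m$ by residue modulo $d$, $f(\zeta_d^m)$ depends only on $m \bmod d$ and each residue class has total coefficient zero.

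Step two uses linearity of the action: $\gamma f=\alpha f-\beta f$, and $\beta f=0$ as a rational function since $\beta\in\Ann_R(f)$. Hence $\alpha f=\gamma f$ in $\Q(x)$, so $\alpha f$ vanishes at $\zeta_d$ and $\Phi_d$ is a factor of $\alpha f$.

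The main obstacle is the bookkeeping about evaluation versus identity of rational functions. We must ensure that $(\alpha f)(\zeta_d)$ can be computed termwise as $\sum a_m f(\zeta_d^m)$. Even if some individual $f(x^m)$ had a pole at $\zeta_d$, the equality $\alpha f=\gamma f$ holds in $\Q(x)$, so the value of the reduced fraction at $\zeta_d$ is determined. The termwise expression for $\gamma f$ is a sum of functions each regular at $\zeta_d$, so $\gamma f$ is regular there with value $0$. A reduced fraction $P/Q$ regular at $\zeta_d$ with value $0$ has $\Phi_d\mid P$. This is exactly the notion of factor fixed in the introduction, so no cancellation issue arises.
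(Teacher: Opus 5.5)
Your proposal is correct and is essentially the paper's own proof. Since $\beta f=0$, one has $\alpha f=(\alpha-\beta)f$ in $\Q(x)$; also $\supp(\alpha-\beta)\subseteq\supp(\alpha)\cup\supp(\beta)$, and $\alpha-\beta\in\ker\phi_d$ because $\phi_d\colon\Z[\Z^\circ]\to\Z[(\Z/d\Z)^\circ]$ is a ring homomorphism. Applying \cref{prop:0mod-rational} to $\alpha-\beta$ then gives the factor $\Phi_d$. Your extra discussion of possible poles of individual terms is harmless but unnecessary, since the hypothesis already guarantees that every $f(\zeta_d^a)$ with $a\in\supp(\alpha)\cup\supp(\beta)$ is defined.
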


Here is a brief outline of our article. 

\subsection{Outline.} 
In \cref{sec:mahler}, we introduce the Mahler algebra $\Z[\Z^{\circ}]$ and the generalized necklace rational functions $M_n(f)$. We describe a theorem explaining how the symmetry of $f$ and a congruence property of $\psi_n$ give rise to certain cyclotomic factors of $M_n(f)$ (see \cref{thm:mahler-congruence_revised}). For certain symmetries, we also give an explicit condition for such a congruence of $\psi_n$ to occur.  In \cref{sec:signed-mahler}, we introduce a signed Mahler algebra to give a conceptual explanation for cyclotomic factors of $M_n(f)$ of the form $\Phi_{2m}$. In the spirit of the main results in \cref{sec:mahler}, we explain the relationship between the symmetries of $f$, the congruence property of $\psi_n$ in the signed Mahler algebra, and certain cyclotomic factors of $M_n(f)$ (see \cref{thm:mahler-congruence_signed}). Finally, in \cref{sec:galois}, we explain the lowering field of definition phenomenon. We explain how this new phenomenon provides new insight to cyclotomic factors of rational necklace functions. Finally, in the appendix, we give some ring-theoretic properties of $\Z[\Z^{\circ}]$ which may be of independent interest. In particular, we study its zero-divisors and their relation to rational functions with symmetries.

\section{Mahler algebra, symmetries, and cyclotomic factors of rational necklace functions} \label{sec:mahler}
In this section we discuss the appearance of various cyclotomic factors of $M_n(f)$ via the Mahler algebra introduced in \cite{hyde2019polynomial} (with a slight modification to deal with rational functions). In particular, we show that the presence of additional symmetries in $f$ contributes to the existence of several factors of $M_n(f)$.

\subsection{Mahler algebra}
Let $\Psi$ be the integral monoid ring $\Z[\N^\circ]$, where $\N^\circ$ is the multiplicative monoid of positive integers introduced in \cite{ hyde2019polynomial}. Explicitly, as a set, $\Psi$ consists of all integral linear combinations of formal expressions $[m]$ with $m\in \N$. Addition in $\Psi$ is defined  componentwise, and the multiplication in $\Psi$ is defined by extending the relation $[m][n]=[mn]$. Since we are dealing with rational functions and their symmetries, it is natural to extend this ring to include negative exponents as well. More precisely, let $R$ be the integral monoid ring $\Z[\Z^{\circ}]$, where $\Z^{\circ}$ is the multiplicative monoid of non-zero integers. The inclusion of negative indices allows us to encode symmetries involving $x \mapsto x^{-1}$, which will play an essential role in the later part of the article. Naturally, $\Z[\N^{\circ}]$ is a subring of $\Z[\Z^{\circ}]$.

Let $M= \Q(x)$, the field of all rational functions with coefficients in $\Q.$ Then, $M$ is a module over $R$ where the action of  $\alpha = \sum_{m \in \Z^{\circ}} a_m[m]$ on $f(x)$ is given by 

\[ (\alpha f)(x) = \sum_{m \in \Z^{\circ}} a_m ([m] \cdot f(x))= \sum_{m \in \Z^{\circ}} a_m f(x^m). \]

We note that if $\alpha\in \Psi$ satisfies that $\alpha x=0$ then $\alpha=0$. In fact, write $\alpha =\sum_m c_m[m]$, then $0=\alpha x=\sum_m c_m x^m$. Hence $c_m=0$, for all $m$, i.e., $\alpha=0$.

Define $\varphi_n:=\sum_{d\mid n}\mu(d) [n/d] \in \Psi$. Then $M_n(x)= \varphi_n x$. We also define $$\psi_n:=\sum_{d\mid n}\mu(d) [d] \in \Psi.$$
Note that if $n$ is square free then 
\[
\psi_n=\mu(n) \varphi_n = {\prod_{p \mid n} (1-[p]).}
\]

\begin{rem}
Throughout this article we use the normalization \(M_n(f)=\psi_n f\). By the above explanation, this differs from the classical normalization only by a nonzero scalar.
\end{rem}

\begin{rem}
In \cite{hyde2022cyclotomic}, Hyde also studies the cyclotomic factors of the shifted cyclotomic polynomials $\Phi_n(x)-1.$ The formalism that we describe in this article applies to this case as well. More precisely, similar to the \textit{additive} necklace rational functions $M_n(f)$, we can also define the notion of a \textit{multiplicative} necklace rational function. To do so, we observe that $\Q(x)^{\times}$ can also be considered as a module over $\Z[\Z^{\circ}]$ where the action of $\alpha:=\sum_{m \in \Z^{\circ}} a_m [m]\in \Z[\Z^{\circ}]$ on $f$ is given by 
\[ \alpha*f  = \prod_{m \in \Z^{\circ}} (a_m[m])*f=  \prod_{m \in \Z^{\circ}} f(x^m)^{a_m}.\]
We then define $P_{n}(f) = \psi_n * f$. With this definition, $\Phi_n(x) = \left[P_n(x-1)\right]^{\mu(n)}$.  All theorems that we discussed in this article for the additive necklace rational function has a direct analog for the shifted multiplicative necklace rational function $P_n(f) -1$ using this module structure of $\Q(x)^{\times}$ over $\Z[\Z^{\circ}].$ Formally speaking, we can think about $P_n(f)-1$ as $\exp(\psi_n \log(f))-1$, even though rigorously speaking $\log(f)$ is not in $\Q(x)$.
\end{rem}

%By \cite[Proposition 4.2.1]{hyde2019polynomial}, the map $\alpha\mapsto \alpha x$ defines an isomorphism between $\Psi[m]$ and $\Z[x]/(x^m-1)$ as $\Psi$-modules.

Let $(\Z/d\Z)^\circ$ be the multiplicative monoid of the ring $\Z/d\Z$ (including the zero element). The map $[m]\mapsto [m \mod d]$ induces a ring homomorphism $\phi_d\colon \Z[\Z^\circ]\to \Z[(\Z/d\Z)^\circ]$. The image of an element $\alpha\in \Z[\Z^\circ]$ via this ring homomorphism is denoted by $\overline{\alpha}:=\phi_d(\alpha)$. Without introducing another notation, we also denote by $\phi_d$ to be the restriction of $\phi_d$ to $\Psi$; namely $\phi_d: \Z[\N^{\circ}] \to \Z[(\Z/d)^{\circ}].$ 

Let $\alpha,\beta\in \Z[\Z^\circ]$, we say that $\alpha$ is congruent to $\beta$ modulo $m$ inside bracket, and write $\alpha\equiv\beta \mod [m]$ if 
$\alpha-\beta$ can be written in the form 
\[\alpha-\beta= \sum_k n_k([a_k]-[b_k]), \]
where $n_k\in \Z$ and $a_k\equiv b_k \mod m$ for every $k$. One can check that this is an equivalence relation. % and $\Psi$-invariant.
The following statement is a slight generalization of \cite[Corollary 4.2.2, part 1]{hyde2019polynomial}

\begin{prop}
\label{prop:0modm}
    Let $\alpha\in \Z[\Z^\circ]$ and $d\geq 1$. The following conditions are equivalent.
    \begin{enumerate}
    \item $\overline{\alpha}=0$ in $\Z[(\Z/d\Z)^\circ]$.
        \item $\alpha\equiv 0\mod [d]$.
\end{enumerate}
Assume further that $\alpha\in \Z[\N^\circ]$. Then the above two conditions are also equivalent to 
\begin{enumerate}
        \item[(3)] $x^d-1$ divides $\alpha f(x)$ for all $f(x)\in \Z[x]$;
        \item[(4)] $x^d-1$ divides $\alpha x$.
    \end{enumerate}
\end{prop}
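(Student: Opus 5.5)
We prove (1)$\Leftrightarrow$(2) for general $\alpha\in\Z[\Z^\circ]$, and then (2)$\Rightarrow$(3)$\Rightarrow$(4)$\Rightarrow$(1) when $\alpha\in\Z[\N^\circ]$.

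\textbf{(2)$\Rightarrow$(1).} This is immediate. Each generator $[a_k]-[b_k]$ with $a_k\equiv b_k \bmod d$ is sent by $\phi_d$ to $[\bar a_k]-[\bar b_k]=0$. Since $\phi_d$ is additive, $\overline{\alpha}=0$.

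\textbf{(1)$\Rightarrow$(2).} Write $\alpha=\sum_m a_m[m]$ and group the support of $\alpha$ by residue classes modulo $d$. The condition $\overline{\alpha}=0$ says exactly that, for each class $r\in\Z/d\Z$,
\[
\sum_{m\equiv r} a_m=0 .
\]
Fix a representative $m_r$ in the support of each class. Then the part of $\alpha$ in that class equals
\[
\sum_{m\equiv r} a_m([m]-[m_r]) + \Bigl(\sum_{m\equiv r}a_m\Bigr)[m_r]=\sum_{m\equiv r} a_m([m]-[m_r]),
\]
which has the required form. Summing over $r$ gives $\alpha\equiv 0 \bmod [d]$.

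\textbf{(2)$\Rightarrow$(3).} Now take $\alpha\in\Z[\N^\circ]$, so the $a_k,b_k$ may be taken positive. It suffices to treat one term $[a]-[b]$ with $a\equiv b \bmod d$, where $a,b\ge 1$. For $f\in\Z[x]$, we must show $x^d-1$ divides $f(x^a)-f(x^b)$. Since $x^d\equiv 1$ in $\Z[x]/(x^d-1)$, we have $x^a\equiv x^b$ there. Polynomial evaluation respects congruences, so $f(x^a)\equiv f(x^b)$. Because $x^d-1$ is monic, divisibility in $\Z[x]$ follows directly.

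\textbf{(3)$\Rightarrow$(4).} Take $f=x$.

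\textbf{(4)$\Rightarrow$(1).} Since $\alpha\in\Z[\N^\circ]$, we have $\alpha x=\sum_{m\ge1} a_m x^m$. Reduce modulo $x^d-1$. The residues $1,x,\dots,x^{d-1}$ form a $\Z$-basis of $\Z[x]/(x^d-1)$, and $x^m\mapsto x^{m \bmod d}$. The image of $\alpha x$ is therefore
\[
\sum_{r=0}^{d-1}\Bigl(\sum_{m\equiv r} a_m\Bigr)x^r .
\]
Divisibility by $x^d-1$ forces every coefficient $\sum_{m\equiv r}a_m$ to vanish, which is precisely $\overline{\alpha}=0$.

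The only point needing care is this last step: the map $[m]\mapsto x^{m\bmod d}$ identifies $\Z[(\Z/d\Z)^\circ]$ additively with $\Z[x]/(x^d-1)$, with the class $0$ going to $x^0$. This works because $m\ge1$, so $x^m$ is a genuine monomial. For negative $m$, the expression $\alpha x$ would not be a polynomial, which is why (3) and (4) are stated only for $\Z[\N^\circ]$.
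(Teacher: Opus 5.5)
Your proof is correct and follows the paper's argument essentially step by step: the same regrouping by residue classes for (1)$\Rightarrow$(2), with representatives taken from the support instead of from $\{1,\dots,d\}$ (an immaterial change), and the same termwise divisibility $x^d-1\mid f(x^a)-f(x^b)$ for (2)$\Rightarrow$(3). The only cosmetic difference is in (4)$\Rightarrow$(1): you read off the residue-class sums via the $\Z$-basis $1,x,\dots,x^{d-1}$ of $\Z[x]/(x^d-1)$, whereas the paper writes $\alpha x=(x^d-1)\sum_k n_k x^{b_k}$ and uses injectivity of $\alpha\mapsto\alpha x$ to obtain $\alpha=\sum_k n_k([b_k+d]-[b_k])$ explicitly.
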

\begin{proof}
 $(1)\Rightarrow (2)$: Suppose that $\alpha =\sum\limits_k n_k[a_k]\in \Z[\Z^\circ]$ and $\overline{\alpha}=0$.
Then 
\[
\overline{\alpha}=\sum_{1\leq a\leq d} n_a [\overline{a}],
\]
 where $n_a=\sum\limits_{\substack{k:\, a_k\equiv a\mod d}} n_k$. Because $\overline{\alpha}=0$, $n_a=0$, for all $1\leq a\leq d$. We have
 \[
 \alpha= \sum\limits_k n_k[a_k] =\sum_{1\leq a\leq d }\sum\limits_{k:\,a_k\equiv a} n_k[a_k]=\sum_{1\leq a\leq d }\sum\limits_{k:\,a_k\equiv a} n_k([a_k]-[a]).
 \]
This implies that $\alpha\equiv 0\mod {[d]}$.

 $(2)\Rightarrow (1)$: Suppose that $\alpha\equiv 0\mod [d]$. Then $\alpha=\sum n_k([a_k]-[b_k])$, where $a_k\equiv b_k\mod d$ for every $k$. Clearly, $\bar{\alpha}=\phi_d(\alpha)=\sum n_k([\bar{a_k}]-[\bar{b_k}])=0$ in $\Z[(\Z/d\Z)^\circ]$.

Now we assume further that $\alpha\in \Z[\N^\circ]$.

\medskip
    $(2)\Rightarrow (3)$: Suppose that $\alpha\equiv 0\mod [d]$ and $f(x)$ is a polynomial in $\Z[x]$. We can write $\alpha =\sum_{k} n_k([a_k]-[b_k])$, where $n_k\in \Z$ and $a_k\equiv b_k\mod d$. We have
    \[
    \alpha f(x)=\sum_k n_k (f(x^{a_k})-f(x^{b_k})).
    \]
    Since $a_k\equiv b_k\mod d$, this implies that $f(x^{a_k})-f(x^{b_k})$ is divisible by $x^d-1$. Hence $x^d-1$ divides $\alpha f(x)$.

    \medskip
\noindent      $(3)\Rightarrow (4)$: Clearly, $(3)$ implies $(4)$.

\medskip
    \noindent $(4)\Rightarrow (1)$: Suppose that $x^d-1$ divides $\alpha x$. We can write
    \[
    \alpha x= (x^d-1)(\sum_{k=0}^n n_k x^{b_k}),
    \]
    for some $n_k\in \Z$, $b_k\in \N$. We have
    \[
     \alpha x =  \sum_{k=0}^n n_k (x^{b_k+d}-x^{b_k}) =\left( \sum_{k=0}^n n_k ([b_k+d]-[b_k])\right ) x.
    \]
    Hence $\alpha=\sum_{k=0}^n n_k ([b_k+d]-[b_k])$. Thus $\overline{\alpha}=0$.
\end{proof} 

\iffalse
\begin{rem} 
    In the proof of the implication (1) to (2), if we start with $\alpha =\sum\limits_k n_k[a_k]\in \Z[\N^\circ]$ such that $a_k\not\equiv 0\pmod d$ for $n_k\not=0$ and $\overline{\alpha}=0$, then 
     \[
 \alpha= \sum_{0< a<d }\sum\limits_{k:\,a_k\equiv a} n_k([a_k]-[a]).
 \]
\end{rem}
\fi

There is a rather straightforward generalization of \cref{prop:0modm} to any rational function $f \in \Q(x)$, with one caveat: we need to be careful about the poles of $f$ at roots of unity. Since this is a phenomenon that happens throughout the text, we introduce the following formal definition.

\begin{definition}
 Let $G$ be a multiplicative monoid.   For $\alpha  = \sum_{m} a_m [m] \in \Z[G]$, we define the support $\Supp(\alpha)$ of $\alpha$ to be the set $\{m \mid a_m \neq 0 \}.$ Note that every $\alpha \in \Z[G] $ can be written uniquely as 
\[ \alpha = \sum_{m \in \Supp(\alpha)} a_m [m]. \] 

\end{definition}

\begin{definition} \label{def defined}
   Let $d \geq 1$ and $F \in \Q(x).$
   \begin{enumerate}
       \item 
  We say that $F$ is defined at a complex number $\xi$ if $\xi$ is not a pole of $F.$ Equivalently, we can write $F=\frac{P}{Q}$ where $P, Q \in \Q[x]$ such that $Q(\xi) \neq 0.$ 
  In this case, we also define the value $F(\xi)$ as $
  \frac{P(\xi)}{Q(\xi)}$. It is easy to check that $F(\xi)$ is well-defined.
  
  \item We say that $\Phi_d$ is a factor of $F$ if  we can write $F=\frac{P}{Q}$ where $P, Q \in \Q[x]$ such that $\Phi_d\mid P(x)$ and $\Phi_d \nmid Q(x)$. Note that since $\Phi_d$ is irreducible, this is also equivalent to the fact that $F$ is defined at $\zeta_d$ and $F(\zeta_d)=0$. 
   \end{enumerate}
\end{definition}

\begin{lem} Let $f\in \Q(x)$ and $\xi\in \C$.
\begin{enumerate}
\item Suppose $f=ag+bh$, where $a,b\in \Z$ and $g,h\in \Q(x)$. If $g$ and $h$ are defined at $\xi$ then $f$ is also defined at $\xi$ and $f(\xi)=ag(\xi)+bh(\xi)$. 
\item Let $\alpha=\sum\limits_{a\in \supp(\alpha)} n_a[a]$. If $f$ is defined at $\xi^a$ for each $a\in\supp(\alpha)$ then $\alpha f$ is defined at $\xi$ and $\alpha f(\xi) =\sum\limits_{a\in \supp(\alpha)} n_a f(\xi^a)$.
\end{enumerate}

\end{lem}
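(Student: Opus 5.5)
For part (1) I would work directly from \cref{def defined}. Since $g$ and $h$ are defined at $\xi$, choose representations $g=P_1/Q_1$ and $h=P_2/Q_2$ with $P_i,Q_i\in\Q[x]$ and $Q_1(\xi)\neq 0\neq Q_2(\xi)$. Then
\[
f=\frac{aP_1Q_2+bP_2Q_1}{Q_1Q_2},
\]
and the denominator $Q_1Q_2$ does not vanish at $\xi$. So $f$ is defined at $\xi$. Evaluating this representation gives
\[
f(\xi)=\frac{aP_1(\xi)Q_2(\xi)+bP_2(\xi)Q_1(\xi)}{Q_1(\xi)Q_2(\xi)}=ag(\xi)+bh(\xi).
\]
The only point needing care is that $f(\xi)$ does not depend on the chosen representation. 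This is the well-definedness already noted in \cref{def defined}, so any representation with nonvanishing denominator may be used.

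For part (2) I would first handle a single term $[a]f=f(x^a)$ with $a\in\Z^\circ$, and then finish by induction on $|\supp(\alpha)|$ using part (1). Write $f=P/Q$ with $Q(\xi^a)\neq 0$.

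If $a>0$, then $f(x^a)=P(x^a)/Q(x^a)$. The denominator $Q(x^a)$ evaluated at $\xi$ equals $Q(\xi^a)\neq 0$, so $f(x^a)$ is defined at $\xi$ with value $f(\xi^a)$.

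If $a<0$, I would clear the negative powers. Let $N=\max(\deg P,\deg Q)$ and put $b=-a>0$. Then
\[
f(x^a)=\frac{x^{bN}P(x^{-b})}{x^{bN}Q(x^{-b})},
\]
and both numerator and denominator are polynomials. Note that $\xi\neq 0$ here, since $\xi^a$ is assumed meaningful. Hence the denominator at $\xi$ equals $\xi^{bN}Q(\xi^{a})\neq 0$, and the quotient evaluates to $P(\xi^a)/Q(\xi^a)=f(\xi^a)$.

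Combining the terms by part (1), extended by induction to finite integer combinations, gives that $\alpha f$ is defined at $\xi$ and $(\alpha f)(\xi)=\sum_{a\in\supp(\alpha)} n_a f(\xi^a)$.

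The main, if mild, obstacle is the negative-exponent case. One must make sure the cleared-denominator representation is genuinely polynomial and that its value at $\xi$ agrees with $f(\xi^a)$. The implicit hypothesis $\xi\neq 0$ when some $a<0$ should be stated explicitly. In the application $\xi=\zeta_d$, so it holds automatically.
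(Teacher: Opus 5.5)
Your proof is correct. It is the routine verification the paper leaves implicit (the paper only says the proof is straightforward): common denominators for part (1), and for part (2) a term-by-term check that clears negative powers by multiplying by $x^{bN}$, followed by induction via part (1). Your remark that $\xi\neq 0$ is implicitly needed whenever $\supp(\alpha)$ contains a negative integer is a useful clarification.
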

\begin{proof} The proof is straightforward.
\end{proof}

With these above terminologies, we can now state our proposition. 

\begin{thm} \label{prop:0mod-rational}
    Let $\alpha\in  \Z[\Z^\circ]$, $d>1$ and $f(x)$ a rational function. Suppose that $\phi_d(\alpha)=0$ and $f(x)$ is defined at $\zeta_d^m$, for every $m \in \supp(\alpha)$. Then $\Phi_d$ is a factor of $\alpha f$.
\end{thm}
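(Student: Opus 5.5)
By the preceding lemma, part (2), applied with $\xi=\zeta_d$, the rational function $\alpha f$ is defined at $\zeta_d$. Moreover its value there is
\[ (\alpha f)(\zeta_d)=\sum_{m\in\supp(\alpha)} n_m f(\zeta_d^m), \]
where $\alpha=\sum_{m\in\supp(\alpha)} n_m[m]$. By \cref{def defined}(2), it is then enough to show that this sum vanishes. Note that negative $m$ cause no trouble: $\zeta_d^m$ is simply another root of unity, and $f$ is assumed to be defined there.

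To show the sum vanishes, I would group the terms of $\supp(\alpha)$ by residue class modulo $d$. For each class $\bar a\in \Z/d\Z$, the value $f(\zeta_d^m)$ is the same for every $m$ in that class, because $\zeta_d^m$ depends only on $m \bmod d$. Write $c_{\bar a}$ for this common value. Then
\[ (\alpha f)(\zeta_d)=\sum_{\bar a\in \Z/d\Z} c_{\bar a}\sum_{\substack{m\in\supp(\alpha)\\ m\equiv a \bmod d}} n_m . \]
The inner sum is exactly the coefficient of $[\bar a]$ in $\phi_d(\alpha)$. Since $\phi_d(\alpha)=0$ by hypothesis, every inner sum is zero, and hence $(\alpha f)(\zeta_d)=0$.

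Equivalently, one could use \cref{prop:0modm} ((1)$\Rightarrow$(2)) to write $\alpha=\sum_k n_k([a_k]-[b_k])$ with $a_k\equiv b_k \bmod d$, and then observe that each difference $f(\zeta_d^{a_k})-f(\zeta_d^{b_k})$ vanishes. The only subtlety in that route is that the $a_k,b_k$ produced there must lie in $\supp(\alpha)$, or in residues represented by it, so that $f$ is defined at the corresponding points. For this reason I would instead use the direct grouping argument above, which only evaluates $f$ at $\zeta_d^m$ for $m\in\supp(\alpha)$.

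There is no real obstacle here. The one point requiring care is the definedness bookkeeping, namely ensuring that we never evaluate $f$ at a pole; the hypothesis on $\supp(\alpha)$ is exactly what handles this.
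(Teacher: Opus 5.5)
Your proof is correct and follows essentially the same argument as the paper: group the terms of $\sum_{m} n_m f(\zeta_d^m)$ by residue class modulo $d$ and note that each class sum of coefficients is a coefficient of $\phi_d(\alpha)=0$. Your explicit appeal to the lemma to get that $\alpha f$ is defined at $\zeta_d$ spells out a step the paper leaves implicit.
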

\begin{proof} Write $\alpha=\sum_{m\in \supp(\alpha) } a_m[m]$. Since     $\phi_d(\alpha)=0$, for each residue class $r\mod d$, we have 
\[\sum_{\substack{m\equiv r\mod d\\ m\in \supp(\alpha)} } a_m=0.\] 
Therefore
\[
\alpha f(\zeta_d) = \sum_{m\in \supp(\alpha) } a_m f(\zeta_d^m) =\sum_{r\mod d} \left(\sum_{\substack{m\equiv r\mod d\\ m\in \supp(\alpha)} } a_m\right) f(\zeta_d^r)=0.
\qedhere
\]
\end{proof}
We can translate the algebraic condition  $\phi_d(\psi_n)=0$ into explicit combinatorics of divisors as follows.
\begin{prop} \label{prop:paired-1}
   
Let \(n>1\) be a positive squarefree integer and let \(d>1\). The following are equivalent.

\begin{enumerate}
     \item The divisors of \(n\) can be paired so that
      \(\mu(a)+\mu(b)=0\) and \(a\equiv b\pmod d\) for every pair $\{a,b\}$.
    \item The divisors of some $N\mid n$, $N>1$, can be paired as in (1).
            \item \(\phi_d(\psi_n)=0\) in \(\mathbb Z[(\mathbb Z/d\mathbb Z)^\circ]\).
\end{enumerate}
\end{prop}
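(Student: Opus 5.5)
The plan is to prove the cycle $(1)\Rightarrow(2)\Rightarrow(3)\Rightarrow(1)$. The key identity is
\[
\psi_n=\sum_{a\mid n}\mu(a)[a],
\]
and condition (3) says exactly that $\phi_d$ kills this element. I will check (3) one residue class at a time.

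$(1)\Rightarrow(2)$ is immediate: take $N=n$, which is allowed since $n>1$.

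For $(2)\Rightarrow(3)$, let $N\mid n$ with $N>1$, and suppose the divisors of $N$ are paired as in (1). For each pair $\{a,b\}$ we have $\mu(b)=-\mu(a)$. Hence
\[
\psi_N=\sum_{\{a,b\}}\mu(a)\bigl([a]-[b]\bigr)\equiv 0 \pmod{[d]},
\]
and \cref{prop:0modm} (the implication $(2)\Rightarrow(1)$ there) gives $\phi_d(\psi_N)=0$.

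Now write $n=Nm$. Since $n$ is squarefree, $\gcd(N,m)=1$, and every divisor of $n$ factors uniquely as $ac$ with $a\mid N$ and $c\mid m$. Multiplicativity of $\mu$ then gives $\psi_n=\psi_N\psi_m$ in $\Z[\N^\circ]$. Since $\phi_d$ is a ring homomorphism,
\[
\phi_d(\psi_n)=\phi_d(\psi_N)\,\phi_d(\psi_m)=0 .
\]
If one prefers an explicit pairing, take the pairs $\{ac,bc\}$, where $\{a,b\}$ runs over the pairs for $N$ and $c$ runs over the divisors of $m$. These pairs partition the divisors of $n$, they satisfy $\mu(bc)=\mu(b)\mu(c)=-\mu(ac)$, and $ac\equiv bc\pmod d$.

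For $(3)\Rightarrow(1)$, expand
\[
\phi_d(\psi_n)=\sum_{r\in\Z/d\Z}\Bigl(\sum_{a\mid n,\ a\equiv r}\mu(a)\Bigr)[r].
\]
Vanishing of this element means each inner sum is zero. Because $n$ is squarefree, every $\mu(a)$ is $\pm1$. So within each residue class $r$, the number of divisors with $\mu=1$ equals the number with $\mu=-1$. Pairing these two groups off arbitrarily inside each class produces a pairing of all divisors of $n$ with $\mu(a)+\mu(b)=0$ and $a\equiv b\pmod d$.

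The only point needing care is $(2)\Rightarrow(3)$: it relies on the factorization $\psi_n=\psi_N\psi_m$, which in turn uses coprimality of $N$ and $n/N$, i.e.\ squarefreeness of $n$. Everything else is bookkeeping.
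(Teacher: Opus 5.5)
Your proof is correct and follows the paper's argument essentially step for step: the same cycle $(1)\Rightarrow(2)\Rightarrow(3)\Rightarrow(1)$, the factorization $\psi_n=\psi_N\psi_{n/N}$ from squarefreeness for $(2)\Rightarrow(3)$, and the per-residue-class count of divisors with $\mu=1$ versus $\mu=-1$ for $(3)\Rightarrow(1)$. Your optional explicit pairing $\{ac,bc\}$ is a small addition beyond the paper, since it gives $(2)\Rightarrow(1)$ directly without passing through $\phi_d$.
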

\begin{proof}
$(1)\Rightarrow (2)$: This is trivial.

$(2)\Rightarrow (3)$: Suppose that the set of divisors of $N\mid n$ can be paired as $\{a_1,b_1\},\ldots, \{a_l,b_l\}$ such that $\mu(a_i)+\mu(b_i)=0$ and  \(a\equiv b\pmod d\) for every $i$. We have
    \begin{align*}
        \psi_N
&= \sum_{m \mid N} \mu(m) [m] =  \sum_{i=1}^{l}
\mu(a_i)[a_i]+ \mu(b_i)[b_i]
\\
&=  \sum_{i=1}^{l}
\mu(a_i) ([a_i]-[b_i]).
    \end{align*}
    Since $d \mid a_i-b_i,$ for all $i \in \{1,2,\ldots,l\},$ we have $\phi_d(\psi_N)= 0.$ Because $n$ is squarefree, $\psi_n = \psi_N \psi_{n/N}$ and hence $\phi_d(\psi_n)=\phi_d(\psi_N)\phi_d(\psi_{n/N})=0$.

$(3)\Rightarrow (1)$:     For each residue class $r$, let
\[
S_r=\{a\mid n : a\equiv r\pmod d\},
\]
and decompose it according to the sign of $\mu$:
\[
S_r^{+}=\{a\in S_r : \mu(a)=1\}, 
\qquad
S_r^{-}=\{a\in S_r : \mu(a)=-1\}.
\]
Set $p_r = |S_r^{+}|$ and $q_r = |S_r^{-}|$.  

Since \( \phi_d(\psi_n)=0, \) we have \(  \sum_{a\mid n}\mu(a)[\overline{a}]=0 \) in \(\mathbb Z[(\mathbb Z/d\mathbb Z)^\circ]\). Equating the coefficient of each residue class \(r\) gives \(  p_r-q_r=0, \) that is, \( p_r=q_r \) for every residue class \(r\). 
Hence, for each residue class \(r\), the sets \(S_r^+\) and \(S_r^-\) have the same cardinality. We may therefore pair each element of \(S_r^+\) with a unique element of \(S_r^-\). 
Thus the set of divisors of \(n\) can be partitioned into pairs \[ \{a_1,b_1\},\ldots,\{a_\ell,b_\ell\} \] such that \( a_i\equiv b_i \pmod d \) and \( \mu(a_i)+\mu(b_i)=0 \) for every \(i\). 
\end{proof}

We illustrate \cref{prop:0mod-rational} with some examples.

\begin{expl}
We first provide a simple and straightforward example. Let $f=\dfrac{x}{x^2+2}.$ In this case $M_{15}(f)$ can be written in lowest terms as $\dfrac{P}{Q}$ with $P,Q\in\Q(x)$ and $\gcd(P,Q)=1.$ Moreover
\[
P=\Phi_1\Phi_2\Phi_4\,R(x),
\]
where $R(x)$ is an irreducible polynomial of degree $42.$ One checks that $\psi_{15}\equiv 0\mod [d]$ for $d\in\{1,2,4\}$. For example, for $d=4$ we have 
\[ \psi_{15} \equiv [1]-[3]-[5]+[15] \equiv [1]-[3]-[1]+[3] \equiv 0 \mod{[4]}.\]
{Equivalently, for $d=4$, the divisors of $n=15$ can be paired as $\{1, 5\}, \{3, 15\}$ as described in \cref{prop:paired-1}. }
Since $x^2+2$ has no roots of unity, $M_{15}(f):=\psi_{15}f$ is defined at every root of unity. Hence \cref{prop:0mod-rational} accounts for all cyclotomic factors of $M_{15}(f)$ in this case.
\end{expl}

\begin{expl} \label{exp:additional-root}
Consider a similar function $f=\dfrac{x}{1+x^2}$. Here the reduced form of $M_{15}(f)$ is $\dfrac{P}{Q}$ with
\[
P=\Phi_1\Phi_2\Phi_3\Phi_6\,R(x),
\]
where $R(x)$ is irreducible. Note that $\Phi_4$ is no longer a factor of $M_{15}(f)$. The reason \cref{prop:0mod-rational} does not apply is that $f$ is not defined at $\zeta_4$, so $M_{15}(f)$ is also undefined at $\zeta_4$. On the other hand $\Phi_6$ appears as a factor even though $\psi_{15}\not\equiv 0\mod{[6]}$. Indeed, modulo $[6]$ we have
\[
\psi_{15}\equiv[1]-[3]-[5]+[15]\equiv[1]-[3]-[5]+[3]\equiv[1]-[5]\equiv[1]-[-1] \not \equiv 0\mod{[6]}.
\]
This example shows we need additional tools to detect cyclotomic factors of $M_n(f)$. In the next section we develop a systematic method to partially achieve this.
\end{expl}

\subsection{Symmetries of $f$}
As explained in \cref{exp:additional-root}, while \cref{prop:0mod-rational} provides a rather general condition for the existence of certain cyclotomic factors of $M_n(f)$, there are some factors that cannot be explained by this proposition. Here is another example that we have known for a long time. Indeed, when $f_0=\dfrac{x}{1-x}$, we show in \cite{chidambaram2023fekete} that $\Phi_8$ is a factor of $M_{15}(f_0)$ even though $\psi_{15}$ is not congruent to $0$ mod $[8].$ It turns out that, in this case, $f_0$ has an extra symmetry, namely 
\[ f_0(x)+f_0 \left( \frac{1}{x} \right)=-1.\]
In this section, we provide a general theorem that explains this phenomenon. 

\iffalse
To do so, we need to enlarge the Mahler algebra to make sense of elements such as $\dfrac{1}{x}$. 

\tung{This definition is already mentioned in the previous section. We can remove it.}
\begin{definition}
We define the Mahler algebra $R:=\Z[\Z^{\circ}]$ to be the algebra generated by the monoid $\Z^{\circ}:=\Z \setminus \{0\}.$
\end{definition}

\fi

%Let $M= \Q(x)$. Then $M$ is module over $\Z[\Z^{\circ}]$ under the natural action; namely if $\alpha = \sum_{k} n_k [\alpha_k]$ then 
%\[ \alpha(f) = \sum_{k} n_k f(x^{\alpha_k}). \]

For an element $f \in M:= \Q(x)$, the symmetries of $f$ can be described by an annihilator ideal--which we now recall. 
\begin{definition}
Let $f \in M$ be an element in $M$. We define the annihilator ideal of $f$ as 
\[ \Ann_{R}(f)=\{r \in R \mid rf = 0\}.\]
We refer to elements of \(\Ann_R(f)\) as symmetries of \(f\).
\end{definition}

\begin{expl}
    If $f(x)=\dfrac{x}{x^2+1}$ as in \cref{exp:additional-root}, then we can see that $[1]-[-1] \in \Ann_{R}(f)$. More precisely 
    \[ ([1]-[-1])f(x)= \dfrac{x}{1+x^2}-\dfrac{1/x}{1+1/x^2} = \dfrac{x}{1+x^2}-\dfrac{x}{1+x^2}=0.\]

\end{expl}
We consider  another example which is somewhat less trivial. 
\begin{expl}\label{ex:modifiedFekete}
    
    While it is not true that $([-1]+[1])f_0 =0$ where $f_0 = \dfrac{x}{1-x}$, we can check that  
    \[ ([-1]+[1])f_0 = \frac{x}{1-x}+\frac{1/x}{1-1/x} = -1.\]
Therefore, if we let $f_1= f_0 +\dfrac{1}{2}$ then $([-1]+[1])f_1=0$, and therefore $[-1]+[1] \in \Ann_{R}(f_1).$ However, observe that for $n>1$ 
\[\psi_n(f_1)=\psi_n(f_0)+\psi_n(1/2)=\psi_n(f_0).\]
As a result, we can use $f_1$ to study the cyclotomic factors of $M_n(f_0).$ More precisely, for \(n>1\), the cyclotomic factors of \(M_n(f_0)\) may be studied using the symmetry given by: \(([1]+[-1])f_1=0\).

\end{expl}

We now state a general criterion relating symmetries of \(f\) to cyclotomic factors of \(\alpha f\). % We remark that when $I=0$;  this result covers \cref{prop:0mod-rational}.

\begin{thm} \label{thm:mahler-congruence_revised}
   % Let $I\subset R=\Z[\Z^{\circ}]$ be an ideal and  $f\in \Q(x)$ with $I\subset  \Ann_{R}(f)$. Let $\alpha \in \Z[\Z^{\circ}]$.
   Let $f\in \Q(x)$ and $\alpha\in R=\Z[\Z^{\circ}]$.
    Suppose there exists $\beta\in \Ann_R(f)$ such that $\phi_d(\alpha)=\phi_d(\beta)$ and $f$ is defined at $\zeta_d^a$ for each $a \in \supp(\alpha)\cup \supp(\beta)$. Then $\Phi_d$ is a factor of $\alpha f$. 
\end{thm}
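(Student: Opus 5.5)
The plan is to reduce to \cref{prop:0mod-rational} applied to the difference $\gamma:=\alpha-\beta$. Since $\beta\in\Ann_R(f)$, we have $\beta f=0$ as an element of $\Q(x)$. Therefore $\alpha f=\alpha f-\beta f=(\alpha-\beta)f=\gamma f$ as rational functions, using only that the action of $R$ on $M=\Q(x)$ is additive. Moreover $\phi_d$ is a ring homomorphism, so $\phi_d(\gamma)=\phi_d(\alpha)-\phi_d(\beta)=0$.

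Next I will check the definedness hypothesis of \cref{prop:0mod-rational} for $\gamma$. Cancellation can only shrink supports, so $\supp(\gamma)\subseteq\supp(\alpha)\cup\supp(\beta)$. By assumption, $f$ is defined at $\zeta_d^m$ for every $m$ in this union, hence for every $m\in\supp(\gamma)$.

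\cref{prop:0mod-rational} (with $d>1$ as there; the case $d=1$ is handled by the same computation) then gives that $\gamma f$ is defined at $\zeta_d$ and $\gamma f(\zeta_d)=0$. Since $\gamma f=\alpha f$ in $\Q(x)$, the function $\alpha f$ is defined at $\zeta_d$ and vanishes there. By \cref{def defined}, this means $\Phi_d$ is a factor of $\alpha f$.

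The only delicate point, and the reason the hypothesis covers $\supp(\beta)$ as well, is that definedness and the value at $\zeta_d$ must be taken for the reduced rational function $\alpha f$, not computed term by term. Here this causes no trouble, because the identity $\alpha f=\gamma f$ holds in the field $\Q(x)$ itself. Alternatively, one can evaluate directly: by the lemma on linear combinations, $\alpha f(\zeta_d)=\sum_a n_a f(\zeta_d^a)$ and $0=\beta f(\zeta_d)=\sum_b m_b f(\zeta_d^b)$. Grouping by residues mod $d$, the equality $\phi_d(\alpha)=\phi_d(\beta)$ shows these two sums agree, so $\alpha f(\zeta_d)=0$.
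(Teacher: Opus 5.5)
Your proposal is correct and matches the paper's proof: use $\beta f=0$ to rewrite $\alpha f=(\alpha-\beta)f$, observe that $\phi_d(\alpha-\beta)=0$ and $\supp(\alpha-\beta)\subseteq\supp(\alpha)\cup\supp(\beta)$, and apply \cref{prop:0mod-rational}. Your handling of the case $d=1$ and the alternative evaluation by grouping residues are sound extras that the paper leaves implicit.
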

\begin{proof}
    Since $\beta\in \Ann_R(f)$, $\beta f=0$ and hence $\alpha f=(\alpha-\beta )f$ in $\Q(x)$. 
    Note that $\supp(\alpha-\beta)\subset \supp(\alpha)\cup \supp(\beta)$. Hence $f$ is defined at $\zeta_d^a$ for each $a\in \supp(\alpha-\beta)$. By Proposition~\ref{prop:0mod-rational}, $\Phi_d$ is a factor of $(\alpha-\beta )f=\alpha f$.
\end{proof}

{We now describe some equivalent combinatorial conditions for some special cases of the congruences described in \cref{thm:mahler-congruence_revised}. We first deal with the case where the symmetry is governed by $[1]+[-1]$ as in the case of Fekete polynomials.}

\begin{lem} \label{lem:[-1]+[1]-first}
  %  Let $I = \langle [1] + [-1] \rangle$ be the ideal generated by $[1]+[-1]$ in the ring $\Z[\Z^{\circ}]$. 
    Let $\alpha=\sum\limits_{a\in \supp(\alpha)} n_a[a]\in \Z[\Z^{\circ}]$. For each residue class $r\in (\Z/d)^\circ$, define $c_r=\sum\limits_{\substack{a\equiv r\mod d\\ a\in \supp(\alpha)}}n_a$. 
    Then $\phi_d(\alpha)\in \phi_d(\langle [1] + [-1] \rangle)$ if and only if $c_r=c_{-r}$ for $r\not\equiv -r\pmod d$ and $c_r$ is even for $r\equiv -r\pmod d$.
   \end{lem}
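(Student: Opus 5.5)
Since $\phi_d$ is a ring homomorphism, $\phi_d(\langle [1]+[-1]\rangle)$ is the ideal of $\Z[(\Z/d)^\circ]$ generated by $[\bar 1]+[\overline{-1}]$. (The image of an ideal under a surjective ring map is an ideal, and $\phi_d$ is surjective because every residue $r$ is the image of $[r]$, where we take $r=d$ for the zero class.) Because $(\Z/d)^\circ$ is a monoid, this ideal is spanned over $\Z$ by the elements $[s]\big([\bar 1]+[\overline{-1}]\big)=[s]+[-s]$ with $s\in(\Z/d)^\circ$. So the membership condition can be restated as follows: $\phi_d(\alpha)=\sum_{r} c_r[r]$ lies in the image if and only if it is a $\Z$-linear combination of the elements $e_s:=[s]+[-s]$. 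Here we use that the coefficient of $[r]$ in $\phi_d(\alpha)$ is exactly $c_r$, by the definition of $\phi_d$.

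The proof is then linear algebra over $\Z$ in the free abelian group with basis $\{[r]: r\in\Z/d\}$. We partition $\Z/d$ into orbits of the involution $r\mapsto -r$. For an orbit $\{r,-r\}$ with $r\ne -r$ we have $e_r=e_{-r}=[r]+[-r]$. For a fixed point $r=-r$ (namely $r=0$, and $r=d/2$ when $d$ is even) we have $e_r=2[r]$. Different orbits involve disjoint sets of basis elements. Hence the span of the $e_s$ is the direct sum over orbits of $\Z([r]+[-r])$ for two-element orbits and $2\Z[r]$ for fixed points.

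Both directions follow from this description. For necessity, suppose $\sum_r c_r[r]=\sum_s m_s e_s$. Comparing coefficients orbit by orbit gives $c_r=c_{-r}=m_r+m_{-r}$ on two-element orbits and $c_r=2m_r$ at fixed points. For sufficiency, choose one representative $r$ from each two-element orbit and take coefficient $c_r$ on $e_r$; at each fixed point take coefficient $c_r/2$ on $e_r$. This linear combination equals $\phi_d(\alpha)$.

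The only point needing care is the first step, identifying the image ideal with the $\Z$-span of the $e_s$. One must check that elements $[s]$ coming from nonzero integers $m$ cover every residue class, including $0$ (take $m=d$). With that in place, the remainder is bookkeeping.
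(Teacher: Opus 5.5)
Your proposal is correct and follows essentially the paper's argument. Both identify $\phi_d(\langle[1]+[-1]\rangle)$ with the ideal generated by $[1]+[-1]$ in $\Z[(\Z/d)^\circ]$, write its elements as $\sum_r m_r([r]+[-r])$, and compare coefficients with $\phi_d(\alpha)=\sum_r c_r[r]$. You make explicit two points the paper leaves implicit: the surjectivity of $\phi_d$ (including the zero class via $m=d$), and the orbit-by-orbit bookkeeping.
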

   \begin{proof}
       Clearly, we have $\phi_d(\alpha)=\sum c_r[r]$. On the other hand, $\phi_d(\langle [1] + [-1] \rangle)= \langle [1] + [-1] \rangle \subset \Z[(\Z/d)^\circ]$ and an arbitrary element in $\phi_d(\langle [1] + [-1] \rangle)$ is of the form
       \[
       \left(\sum_{r} m_r [r]\right) ([1] + [-1]) = \sum_{r} m_r ([r] + [-r]).
       \]
       From this we see that $\phi_d(\alpha)\in \phi_d(\langle [1] + [-1] \rangle)$ if and only if $c_r=c_{-r}$ for $r\not\equiv -r\pmod d$, and $c_r$ is even for $r\equiv -r\pmod d$. Note that $r\equiv -r\pmod d$ if and only if $2r\equiv 0\pmod d$ if and only if $r=0$ or $r=d/2$ if $d$ is even.
   \end{proof}

\begin{lem} \label{lem:[-1]+[1]-second}
   % Let $I = \langle [1] + [-1] \rangle$ be the ideal generated by $[1]+[-1]$ in the ring $\Z[\Z^{\circ}]$. 
    Let $\alpha\in \Z[\Z^{\circ}]$. Suppose that $\phi_d(\alpha)\in \phi_d(\langle [1] + [-1] \rangle)$. Then there exists $\beta\in \langle [1] + [-1] \rangle$ such that $\phi_d(\alpha)=\phi_d(\beta)$ and $\supp(\beta)\subseteq \supp(\alpha)\cup(-\supp(\alpha))$. Here, $-\supp(\alpha)=\{-a\mid  a\in \supp(\alpha)\}.$
     \end{lem}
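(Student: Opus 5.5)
We construct $\beta$ explicitly from $\alpha$, using the combinatorial description of \cref{lem:[-1]+[1]-first}. Write $\alpha=\sum_{a\in\supp(\alpha)} n_a[a]$ and let $c_r$ be as in that lemma. By hypothesis, $c_r=c_{-r}$ whenever $r\not\equiv -r\pmod d$, and $c_r$ is even whenever $r\equiv -r\pmod d$. We will choose integers $m_a$ for $a\in\supp(\alpha)$ and set
\[
\beta=\sum_{a\in\supp(\alpha)} m_a([a]+[-a]) = \Big(\sum_{a\in\supp(\alpha)} m_a[a]\Big)([1]+[-1]).
\]
This element lies in $\langle [1]+[-1]\rangle$ and has support in $\supp(\alpha)\cup(-\supp(\alpha))$. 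It remains only to arrange $\phi_d(\beta)=\phi_d(\alpha)$.

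Group $\supp(\alpha)$ by residue classes mod $d$, and treat the classes in orbits $\{r,-r\}$. The image of $\beta$ is
\[
\phi_d(\beta)=\sum_r M_r([r]+[-r]), \qquad M_r:=\sum_{a\equiv r,\ a\in\supp(\alpha)} m_a .
\]
So the coefficient of $[r]$ in $\phi_d(\beta)$ is $M_r+M_{-r}$ if $r\not\equiv -r$, and $2M_r$ if $r\equiv -r$. We need this to equal $c_r$.

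Consider first an orbit with $r\not\equiv -r$. If the class $r$ meets $\supp(\alpha)$, pick one representative $a_0$ in it, set $m_{a_0}=c_r$, and set $m_a=0$ for every other $a$ in classes $r$ and $-r$. Then $M_r=c_r$ and $M_{-r}=0$, so both coefficients equal $c_r=c_{-r}$. If only the class $-r$ meets $\supp(\alpha)$, do the same with a representative of $-r$. If neither class meets $\supp(\alpha)$, then $c_r=c_{-r}=0$, and there is nothing to do.

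Now consider a class with $r\equiv -r$. If it meets $\supp(\alpha)$, pick a representative $a_0$ and set $m_{a_0}=c_r/2$, which is an integer because $c_r$ is even. If it does not meet $\supp(\alpha)$, then $c_r=0$.

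With these choices every coefficient matches, so $\phi_d(\beta)=\sum_r c_r[r]=\phi_d(\alpha)$.

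The only real subtlety is making sure that a needed representative always exists inside $\supp(\alpha)$ or its negative. This is exactly the bookkeeping handled in the cases above: a nonzero $c_r$ forces the class $r$ to meet $\supp(\alpha)$, and when that class is empty while $c_{-r}=c_r\neq 0$, the symmetry $c_r=c_{-r}$ lets us use a representative of $-r$ instead. The classes $r=0$ and $r=d/2$ fall under the self-paired case.
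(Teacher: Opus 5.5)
Your proof is correct and is essentially the paper's argument. You pick one representative $a\in\supp(\alpha)$ for each orbit $\{r,-r\}$ with $c_r\neq 0$, and put the coefficient $c_r$ (or $c_r/2$ on a self-paired class) on $[a]+[-a]=[a]([1]+[-1])$.

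One minor point: the situation in your last paragraph, where the class $r$ misses $\supp(\alpha)$ but $c_{-r}=c_r\neq 0$, cannot occur, since an empty class has $c_r=0$. So whenever $c_r\neq 0$ both classes meet the support, and that case needs no separate handling.
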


     \begin{proof} Write $\alpha=\sum\limits_{a\in \supp(\alpha)} n_a[a]$.  For each residue class $r\in (\Z/d)^\circ$, define $c_r=\sum\limits_{\substack{a\equiv r\mod d\\ a\in \supp(\alpha)}}n_a$. By the previous lemma, $c_r=c_{-r}$ for $r\not\equiv -r\pmod d$ and $c_r$ is even for $r\equiv -r\pmod d$. 
     Now for each pair of non-self-inverse classes $\{r,-r\}$ such that $c_r\not=0$, we choose a representative $a_r\in \supp(\alpha)$ and set $\beta_{\{r,-r\}}= c_r([a_r]+[-a_r])$. For class $r$ with $r\equiv-r\pmod d$, we have $c_r=2 c^\prime_ r$, and if $c_r\not=0$ then we choose a representative $a_r\in \supp(\alpha)$ and set $\beta^\prime_{r}= c^\prime_r ([a_r]+[-a_r])$. Let $\beta$ be the sum of those $\beta_{\{r,-r\}}$ and $\beta^\prime_r$. Then clearly $\beta$ is in $\langle [1] + [-1] \rangle$, and $\supp(\beta)\subseteq \supp(\alpha)\cup(-\supp(\alpha))$ and $\phi_d(\beta)=\phi_d(\alpha)$. 
     \end{proof}
     \begin{expl}
         Let $I = \langle [1] + [-1] \rangle \subset \Z[\Z^{\circ}]$. Let $d=6$  and $\alpha=3[7]+[-5] + 2[5]+2[11] +4[3]$. In $\Z[(\Z/d)^\circ]$, we have
         \[\phi_d(\alpha)= 4[1]+4[-1] +2[3]+2[-3]= (4[1]+2[3]) ([1]+[-1])\in \phi_d(I).
         \]
         Here we can choose $\beta=4([7]+[-7])+2([3]+[-3])\in I$. Clearly $\beta\in I$, $\supp(\beta)\subseteq \supp(\alpha)\cup(-\supp(\alpha))$ and $\phi_d(\beta)=\phi_d(\alpha)$.
     \end{expl}

Combining \cref{lem:[-1]+[1]-first} and \cref{lem:[-1]+[1]-second}, we have the following theorem.  
\begin{thm} \label{thm:mahler-congruence}
   % Let $I$ be the ideal in $\Z[\Z^{\circ}]$ generated by $[1] + [-1]$. %Let $\bar{I}$ be the image of $I$ under the canonical map $\phi_d\colon \Z[\Z^{\circ}] \to \Z[(\Z/d)^{\circ}].$ 
    Suppose $\alpha \in \Z[\Z^{\circ}]$ such that $\phi_d(\alpha) \in \phi_d(\langle [1] + [-1] \rangle).$ Let $f \in \Q(x)$ such that $[1] + [-1] \in \Ann_{\Z[\Z^{\circ}]}(f)$. If $f$ is defined at $\zeta_d^a$ for each $a \in \supp(\alpha)$, then $\Phi_d$ is a factor of $\alpha f$. 
\end{thm}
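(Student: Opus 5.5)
The plan is to reduce the statement to \cref{thm:mahler-congruence_revised}, using \cref{lem:[-1]+[1]-second} to produce the required $\beta$. The only point needing care is that $\beta$ may involve indices in $-\supp(\alpha)$, where definedness of $f$ is not assumed directly.

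\textbf{Step 1 (constructing $\beta$).} Since $\phi_d(\alpha)\in \phi_d(\langle [1]+[-1]\rangle)$, \cref{lem:[-1]+[1]-second} gives $\beta\in \langle [1]+[-1]\rangle$ with $\phi_d(\beta)=\phi_d(\alpha)$ and $\supp(\beta)\subseteq \supp(\alpha)\cup(-\supp(\alpha))$.

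\textbf{Step 2 ($\beta$ annihilates $f$).} The annihilator $\Ann_R(f)$ is an ideal of $R$, because $M=\Q(x)$ is an $R$-module. By hypothesis it contains $[1]+[-1]$, so it contains the whole ideal $\langle [1]+[-1]\rangle$. In particular $\beta\in \Ann_R(f)$.

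\textbf{Step 3 (definedness, the only real obstacle).} We must check that $f$ is defined at $\zeta_d^{a}$ for every $a\in \supp(\alpha)\cup\supp(\beta)$. By Step 1 it suffices to show that $f$ is defined at $\zeta_d^{-a}$ whenever it is defined at $\zeta_d^{a}$. I would argue via complex conjugation. Write $f=P/Q$ in lowest terms with $P,Q\in\Q[x]$. Then $Q$ has rational coefficients, so $Q(\zeta_d^{a})\neq 0$ implies $Q(\overline{\zeta_d^{a}})=\overline{Q(\zeta_d^{a})}\neq 0$. Since $\overline{\zeta_d^{a}}=\zeta_d^{-a}$, $f$ is defined at $\zeta_d^{-a}$.

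Alternatively, the symmetry itself does this: from $f(x)=-f(1/x)$ in $\Q(x)$, the function $f(1/x)$ is defined at $\xi=\zeta_d^{a}$, which means $f$ is defined at $\xi^{-1}$. Either route works; I would present the conjugation argument since it is cleaner.

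\textbf{Step 4 (conclusion).} All hypotheses of \cref{thm:mahler-congruence_revised} now hold: $\beta\in\Ann_R(f)$, $\phi_d(\alpha)=\phi_d(\beta)$, and $f$ is defined at $\zeta_d^{a}$ for all $a\in\supp(\alpha)\cup\supp(\beta)$. Applying that theorem shows that $\Phi_d$ is a factor of $\alpha f$.
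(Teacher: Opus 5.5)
Your proposal is correct and follows the paper's proof. You obtain $\beta$ from \cref{lem:[-1]+[1]-second}, note $\beta\in\langle[1]+[-1]\rangle\subseteq\Ann_R(f)$, check that $f$ is defined at $\zeta_d^{a}$ for $a\in\supp(\alpha)\cup(-\supp(\alpha))$, and apply \cref{thm:mahler-congruence_revised}. The only difference is that you get definedness at $\zeta_d^{-a}$ by complex conjugation, which holds for any $f\in\Q(x)$ (as in \cref{lem-defined} with $t=-1$), whereas the paper deduces it from $f(x)+f(1/x)=0$; both work.
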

\begin{proof}
    Since $I:=\langle [1] + [-1] \rangle \subset \Ann_{\Z[\Z^{\circ}]}(f)$, one has $([1]+[-1])f=0$, i.e., $f(x)+f(1/x)=0$. From this we see that if $f$ is defined at $\zeta_d^a$  then $f$ is also defined at $\zeta_d^{-a}$. Hence $f$ is defined at $\zeta_d^a$ for each $a\in \supp(\alpha)\cup(-\supp(\alpha))$.

    By the previous lemma, there exists $\beta\in I$ such that $\phi_d(\beta)=\phi_d(\alpha)$ and
    \(
    \supp(\beta)\subseteq \supp(\alpha)\cup(-\supp(\alpha)).
    \)
    Clearly, \( \supp(\alpha)\cup\supp(\beta) \subseteq \supp(\alpha)\cup(-\supp(\alpha)).\) Hence $f$ is defined at $\zeta_d^a$ for each $a\in \supp(\alpha)\cup \supp(\beta)$. 
    By Theorem~\ref{thm:mahler-congruence_revised}, $\Phi_d$ is a factor of $\alpha f$. 
\end{proof}

\begin{rem}
Theorem~\ref{thm:mahler-congruence} generalizes the main theorem of~\cite[Theorem~2.19]{chidambaram2023fekete}
concerning cyclotomic factors of Fekete polynomials. Indeed, let
\[
f(x)=\frac{x}{1-x}+\dfrac{1}{2}.
\]
As observed earlier in Example \ref{ex:modifiedFekete},
\(
([1]+[-1])f=0
\)
and for \(n>1\),
\(
\psi_n(f)=\tilde{F}_n(x).
\)
Therefore, applying Theorem~\ref{thm:mahler-congruence}
to \(f\) and $\alpha=\psi_n$ recovers the cyclotomic factors of the modified Fekete
polynomials obtained in~\cite[Theorem~2.19]{chidambaram2023fekete}.
    
\end{rem}
   By a similar argument as above, we obtain the following result.
   
     \begin{thm} \label{thm:mahler-congruence_02}
   % Let $I$ be an ideal in $\Z[\Z^{\circ}]$ generated by $[1] - [-1]$. %Let $\bar{I}$ be the image of $I$ under the canonical map $\phi_d\colon \Z[\Z^{\circ}] \to \Z[(\Z/d)^{\circ}].$ 
    Suppose $\alpha \in \Z[\Z^{\circ}]$ such that $\phi_d(\alpha) \in \phi_d(\langle [1] - [-1]\rangle).$ Let $f \in \Q(x)$ such that $[1] - [-1]\in \Ann_{\Z[\Z^{\circ}]}(f)$. If $f$ is defined at $\zeta_d^a$ for each $a \in \supp(\alpha)$, then $\Phi_d$ is a factor of $\alpha f$. 
\end{thm}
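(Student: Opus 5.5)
The argument runs parallel to the proof of \cref{thm:mahler-congruence}, with $[1]+[-1]$ replaced by $[1]-[-1]$. Three steps are needed: a combinatorial description of $\phi_d(\langle [1]-[-1]\rangle)$, a lift to some $\beta$ whose support is controlled, and an appeal to \cref{thm:mahler-congruence_revised}.

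\textbf{Step 1: the analogue of \cref{lem:[-1]+[1]-first}.} Write $\alpha=\sum n_a[a]$ and set $c_r=\sum_{a\equiv r} n_a$, so that $\phi_d(\alpha)=\sum_r c_r[r]$. A general element of $\phi_d(\langle [1]-[-1]\rangle)=\langle [1]-[-1]\rangle\subset\Z[(\Z/d)^\circ]$ has the form $\sum_r m_r([r]-[-r])$.
\begin{itemize}
\item For a class with $r\not\equiv -r\pmod d$, the pair $\{r,-r\}$ contributes coefficients $m_r-m_{-r}$ at $r$ and $m_{-r}-m_r$ at $-r$. This forces $c_r=-c_{-r}$.
\item For a class with $r\equiv -r$ (that is, $r=0$ or $r=d/2$), the term $[r]-[-r]$ vanishes. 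This forces $c_r=0$.
\end{itemize}
Conversely, suppose these conditions hold. For each pair $\{r,-r\}$, choose one class $r$ and take $m_r=c_r$; then $\phi_d(\alpha)$ lies in the image.

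\textbf{Step 2: the analogue of \cref{lem:[-1]+[1]-second}.} For each pair $\{r,-r\}$ with $r\not\equiv-r$ and $c_r\neq 0$, pick a representative $a_r\in\supp(\alpha)$ with $a_r\equiv r$. Such an $a_r$ exists because $c_r\ne0$. Set
\[
\beta=\sum c_r([a_r]-[-a_r]).
\]
The self-inverse classes contribute nothing, since there $c_r=0$. Then $\beta\in\langle[1]-[-1]\rangle$, and $\phi_d(\beta)=\sum c_r([r]-[-r])=\phi_d(\alpha)$ by Step 1. Moreover $\supp(\beta)\subseteq\supp(\alpha)\cup(-\supp(\alpha))$.

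\textbf{Step 3: apply \cref{thm:mahler-congruence_revised}.} The hypothesis $[1]-[-1]\in\Ann_R(f)$ means $f(x)=f(1/x)$. Hence if $f$ is defined at $\zeta_d^a$, it is also defined at $\zeta_d^{-a}$. Formally, write $f=P/Q$ in lowest terms; the identity $f(1/x)=f(x)$ shows that the pole set is stable under inversion, and $\zeta^{-1}=\bar\zeta$ for a root of unity. So $f$ is defined at $\zeta_d^a$ for all $a\in\supp(\alpha)\cup\supp(\beta)$. Since $\beta\in\langle[1]-[-1]\rangle\subseteq\Ann_R(f)$ and $\phi_d(\alpha)=\phi_d(\beta)$, \cref{thm:mahler-congruence_revised} shows that $\Phi_d$ is a factor of $\alpha f$.

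The only genuine point of care is Step 1, in getting the self-inverse classes right. Unlike the $[1]+[-1]$ case, where $c_r$ had to be even, here these coefficients must vanish.
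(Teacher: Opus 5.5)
Your proposal is correct and is essentially the paper's argument. The paper proves this theorem only by saying it follows ``by a similar argument'' to \cref{thm:mahler-congruence}, meaning the $[1]-[-1]$ analogues of the two lemmas (describing $\phi_d(\langle [1]+[-1]\rangle)$ and lifting to a $\beta$ with $\supp(\beta)\subseteq\supp(\alpha)\cup(-\supp(\alpha))$) followed by \cref{thm:mahler-congruence_revised}; that is exactly your Steps 1--3, including the correct adjustment that self-inverse classes must have $c_r=0$ rather than $c_r$ even.
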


\begin{expl} We consider the function $f(x)=\dfrac{x}{1+x^2}$ as in Example~\ref{exp:additional-root}. 
    We now explain why the factor $\Phi_6$ of $M_{15}(f)$ can be explained via  \cref{thm:mahler-congruence_02}.  We have \[ f(1/x) = \frac{1/x}{1+1/x^2} = \frac{x}{1+x^2} = f(x). \] 
    Hence \( ([1]-[-1])f=0, \) and therefore \( \langle [1]-[-1]\rangle \subseteq \operatorname{Ann}_{\mathbb Z[\mathbb Z^\circ]}(f). \)
    
    On the other hand, \[ \psi_{15}=[1]-[3]-[5]+[15]. \] Reducing modulo $6$, we obtain \[ \phi_6(\psi_{15}) = [\overline{1}]-[\overline{3}]-[\overline{5}]+[\overline{15}] =   [\overline{1}]-[\overline{-1}]  \in \phi_6\bigl(\langle [1]-[-1]\rangle\bigr). \] 
    Moreover,  $f$ is defined at $\zeta_6^a$ for every $a\in \operatorname{Supp}(\psi_{15})$. Therefore, by Theorem \ref{thm:mahler-congruence_02}, $\Phi_6$ is a factor of \(  M_{15}(f)=\psi_{15}f. \)
\end{expl}
The next proposition translates the algebraic condition  $\phi_d(\psi_n)\in \phi_d(\langle [1] + [-1] \rangle)$ into explicit combinatorics of divisors.
\begin{prop} \label{pro:equivalence}
   Let $n>1$ be a positive squarefree integer and  $d>1$ be a positive integer. %and $d \neq n$. 
   The following conditions are equivalent.
   \begin{enumerate}
       \item The divisors of \(n\) can be paired so that
      \(\mu(a)a+\mu(b)b\equiv 0 \pmod d\)
      for every pair \(\{a,b\}\).
          \item The divisors of some $N\mid n$, $N>1$, can be paired as in (1).
            \item $\phi_d(\psi_n)\in \phi_d(\langle [1] + [-1] \rangle)$.% where $I$ is the principal ideal generated by $[1]+[-1].$
   \end{enumerate}
   \end{prop}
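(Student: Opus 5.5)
The plan is to follow the structure of the proof of \cref{prop:paired-1}, replacing the vanishing criterion for $\phi_d(\psi_n)$ by the membership criterion of \cref{lem:[-1]+[1]-first}. First I unpack condition (1). For a pair $\{a,b\}$ of divisors with $\mu(a)a+\mu(b)b\equiv 0 \pmod d$ there are two cases:
\begin{itemize}
\item if $\mu(a)=-\mu(b)$, the condition says $a\equiv b\pmod d$;
\item if $\mu(a)=\mu(b)$, it says $a\equiv -b\pmod d$.
\end{itemize}
In the first case the pair contributes $\mu(a)([a]-[b])$ to $\psi$, and $\phi_d$ sends this to $0$. In the second case it contributes $\mu(a)([a]+[b])$, and $\phi_d$ sends this to $\mu(a)([\bar a]+[-\bar a])=\mu(a)[\bar a]([1]+[-1])$. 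Either way, the image under $\phi_d$ of each pair's contribution lies in $\phi_d(\langle [1]+[-1]\rangle)=\langle [1]+[-1]\rangle$.

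The implication $(1)\Rightarrow(2)$ is trivial. For $(2)\Rightarrow(3)$, I write $\psi_N$ as the sum of its pair contributions, so $\phi_d(\psi_N)\in\langle[1]+[-1]\rangle$ by the computation above. Since $n$ is squarefree, $\psi_n=\psi_N\psi_{n/N}$. Because $\phi_d$ is a ring homomorphism and the target is an ideal, $\phi_d(\psi_n)=\phi_d(\psi_N)\phi_d(\psi_{n/N})$ also lies in $\phi_d(\langle[1]+[-1]\rangle)$.

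For $(3)\Rightarrow(1)$ I apply \cref{lem:[-1]+[1]-first} to $\alpha=\psi_n$. Here $c_r=|S_r^+|-|S_r^-|$, with $S_r^{\pm}$ defined as in the proof of \cref{prop:paired-1}. The lemma gives $c_r=c_{-r}$ when $r\not\equiv -r \pmod d$, and $c_r$ even when $r\equiv -r \pmod d$, i.e.\ $r\in\{0,d/2\}$. I then build the pairing in two stages.
\begin{enumerate}
\item Within each residue class $r$, pair elements of $S_r^+$ with elements of $S_r^-$ as far as possible. These pairs have $a\equiv b$ and opposite $\mu$. What remains of class $r$ is exactly $|c_r|$ divisors, all with $\mu$ equal to the sign of $c_r$.
\item For a pair of classes $\{r,-r\}$ with $r\not\equiv -r$, the leftovers of $r$ and of $-r$ have the same size $|c_r|=|c_{-r}|$ and the same sign, since $c_r=c_{-r}$. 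Match them bijectively: each such pair has $\mu(a)=\mu(b)$ and $a\equiv -b$, so $\mu(a)a+\mu(b)b=\mu(a)(a+b)\equiv 0$.
\item For a self-inverse class $r$, the leftover has even size $|c_r|$ and a common sign. Pair its elements among themselves: then $a+b\equiv 2r\equiv 0 \pmod d$, so again $\mu(a)(a+b)\equiv 0$.
\end{enumerate}
This exhausts all divisors of $n$ and gives the pairing required in (1).

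The main point requiring care is the bookkeeping in step (2) of the last part. There I must check that every leftover divisor is used exactly once, that the class $0$ and, for even $d$, the class $d/2$ are correctly treated as self-inverse, and that the sign agreement of the leftovers in $r$ and $-r$ really follows from $c_r=c_{-r}$. The rest is a direct translation of \cref{lem:[-1]+[1]-first}.
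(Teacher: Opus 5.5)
Your proposal is correct and follows essentially the same route as the paper. For $(2)\Rightarrow(3)$ you use the same pair-by-pair computation of $\phi_d(\psi_N)$ together with $\psi_n=\psi_N\psi_{n/N}$, and for $(3)\Rightarrow(1)$ the same residue-class counts $c_r=p_r-q_r$ (the criterion of \cref{lem:[-1]+[1]-first}). The only difference is the order of pairing: you pair opposite signs within each class first and then match the same-sign leftovers of $r$ and $-r$. The paper instead matches across $\{r,-r\}$ first, pairs the remainder within class $r$, and simply pairs self-inverse classes arbitrarily.
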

\begin{proof}
    $(1)\Rightarrow (2):$ Take $N := n.$ The conclusion follows immediately.
    
    $(2)\Rightarrow (3):$ Suppose that the set of divisors of $N$ can be partitioned  into pairs 
        \[
\{a_1,b_1\},\{a_2,b_2\},\ldots,\{a_\ell,b_\ell\}, \] 
such that $d\mid \mu(a_i)a_i+\mu(b_i)b_i,$ for every $i$. 
    Let
\[
\alpha_1=\sum_{\mu(a_i)=\mu(b_i)}(\mu(a_i)[a_i]+\mu(b_i)[b_i]),\quad\text{ and } \alpha_2=\sum\limits_{\mu(a_i)=-\mu(b_i)}(\mu(a_i)[a_i]+\mu(b_i)[b_i]).
\]
    We have $\psi_N=\sum_{a\mid N}\mu(a)[a]=\alpha_1+\alpha_2$.  
    
For each pair $(a_i,b_i),$ we have \(d\mid \mu(a_i)a_i+\mu(b_i)b_i.\) 
If $\mu(a_i)=\mu(b_i),$
then $d\mid(a_i+b_i)$ and $b_i\equiv -a_i\pmod d$. Hence 
\[
\phi_d(\alpha_1)
 =\sum_i\mu(a_i)\bigl([\overline{a_i}]+[\overline{-a_i}]\bigr)
 \in\phi_d(I).
\]
If $\mu(a_i)=-\mu(b_i),$
then $d\mid(a_i-b_i)$ and
$
\phi_d(\alpha_2)=\sum\limits_{\mu(a_i)=-\mu(b_i)}(\mu(a_i)[\overline{a_i}]+\mu(b_i)[\overline{b_i}]) =0.
$
Thus
\[
\phi_d(\psi_N)=\phi_d(\alpha_1+\alpha_2)=\phi_d(\alpha_1)\in\phi_d(I).
\]
Since $\phi_d$ is a surjective ring homomorphism, $\phi_d(I)$ is an ideal of $\Z[(\Z/d)^\circ]$. 
Since $n$ is squarefree  and $N\mid n$, $\gcd(N,n/N)=1$ and  $\psi_n = \psi_N \psi_{n/N}.$ Hence
\[
\phi_d(\psi_n)=\phi_d(\psi_N)\phi_d(\psi_{n/N})\in \phi_d(I).
\]

$(3)\Rightarrow (1):$
For each residue class $r$, let
\[
S_r=\{a\mid n : a\equiv r\pmod d\},
\]
and decompose it according to the sign of $\mu$:
\[
S_r^{+}=\{a\in S_r : \mu(a)=1\}, 
\qquad
S_r^{-}=\{a\in S_r : \mu(a)=-1\}.
\]
Set $p_r = |S_r^{+}|$ and $q_r = |S_r^{-}|$.  

The condition for $\phi_d(\psi_n)\in \phi_d(I)$ is equivalent to 
\[
p_r - q_r = p_{-r} - q_{-r},
\]
 for all non-self-inverse residue classes $r\not=-r$, and
\[
p_r - q_r\equiv 0\pmod 2,
\]
for self-inverse residue classes $r=-r$.

Consider a non-self-inverse class $r$ ($r\not=-r$).
Without loss of generality, we may assume that $p_r \geq p_{-r}$. 
Then $s:=q_r-q_{-r}=p_{r}-p_{-r}\geq0$. We pair $p_{-r}$ elements in $S_r^+$ with $p_{-r}$ elements in $S_{-r}^+$.
We pair $q_{-r}$ elements in $S_r^-$ with $q_{-r}$ elements in $S_{-r}^-$. Clearly, for such a pair $\{a,b\}$ we have $a\equiv -b\pmod d$ and $\mu(a)=\mu(b)$. 
The remaining $s$ elements in $S_r^{+}$ are paired with $s$ elements in $S_{r}^{-}$. For such a latter pair $\{a,b\}$ we have $a\equiv b\equiv r\pmod d$ and $\mu(a)=-\mu(b)$. In any case, for each  obtained pair $\{a_i,b_i\}$, 
\[
d \mid \mu(a_i)a_i+\mu(b_i)b_i.
\]

Now consider a self-inverse class $r$ ($r=-r$). We have $|S_r|=p_r+q_r\equiv p_r-q_r\equiv 0\pmod 2$. Hence $S_r$ has an even number of elements. We pair the elements of $S_r$ arbitrarily.  For such a pair $\{a,b\}$ we have $a\equiv b\equiv r\equiv -b\pmod d$. Hence $d\mid \mu(a)a+\mu(b)b$.
\end{proof}
\begin{cor}
Suppose that $\phi_d(\psi_n)\in \phi_d(\langle[1]+[-1]\rangle)$. Then  $d \mid \varphi(n)= \prod_{p \mid n}(p-1).$
\end{cor}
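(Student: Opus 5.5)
The idea is to find an additive map out of $\Z[(\Z/d)^\circ]$ that kills the ideal $\phi_d(\langle[1]+[-1]\rangle)$ and sends $\phi_d(\psi_n)$ to $\pm\prod_{p\mid n}(p-1)$ modulo $d$.

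First, define the $\Z$-linear map
\[
L\colon \Z[(\Z/d)^\circ]\to \Z/d\Z,\qquad [r]\mapsto r,
\]
extended additively. It is well defined, since $\Z[(\Z/d)^\circ]$ is free abelian on the symbols $[r]$ with $r\in(\Z/d)^\circ$. It is not a ring homomorphism, but we only need additivity.

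Next, check that $L$ vanishes on $\phi_d(\langle[1]+[-1]\rangle)$. As computed in the proof of \cref{lem:[-1]+[1]-first}, every element of this image has the form $\sum_r m_r([r]+[-r])$. Applying $L$ gives $\sum_r m_r(r+(-r))=0$ in $\Z/d\Z$. So the hypothesis $\phi_d(\psi_n)\in \phi_d(\langle[1]+[-1]\rangle)$ forces $L(\phi_d(\psi_n))=0$.

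Then compute $L(\phi_d(\psi_n))$ directly. Since $\phi_d(\psi_n)=\sum_{a\mid n}\mu(a)[\bar a]$, we get
\[
L(\phi_d(\psi_n))\equiv\sum_{a\mid n}\mu(a)\,a=\prod_{p\mid n}(1-p)\pmod d.
\]
The last equality is the multiplicativity of $a\mapsto\mu(a)a$, exactly as in $\psi_n=\prod_{p\mid n}(1-[p])$ for squarefree $n$; the Möbius factor kills non-squarefree divisors in general. Hence $d$ divides $\prod_{p\mid n}(1-p)=\pm\prod_{p\mid n}(p-1)$. For squarefree $n$, which is the standing assumption of \cref{pro:equivalence}, this product is $\varphi(n)$.

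Equivalently, one can use the pairing from \cref{pro:equivalence}(1): summing $\mu(a)a+\mu(b)b\equiv 0\pmod d$ over all pairs gives the same congruence.

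There is no real obstacle. The only point needing care is noticing that $[r]\mapsto r$ respects addition, not multiplication, and that this suffices because the ideal's elements are explicitly $\Z$-combinations of $[r]+[-r]$.
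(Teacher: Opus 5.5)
Your proof is correct, and it reaches the congruence by a different route from the paper. The paper first uses the implication (3)$\Rightarrow$(1) of \cref{pro:equivalence} to get a pairing $\{a_i,b_i\}$ of the divisors of $n$ with $d\mid \mu(a_i)a_i+\mu(b_i)b_i$. It then sums over the pairs to get $d\mid\sum_{e\mid n}\mu(e)e=\mu(n)\varphi(n)$.

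You skip the pairing entirely. Your additive functional $L\colon[r]\mapsto r$ kills every element $\sum_r m_r([r]+[-r])$ of $\phi_d(\langle[1]+[-1]\rangle)$, so you can apply it directly to the ideal membership. The paper's sum over pairs is exactly $L$ evaluated on the pairing decomposition, so both arguments compute the same number.

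What your version buys:
\begin{enumerate}
\item It avoids the combinatorial construction of the pairs, i.e.\ the case analysis over self-inverse and non-self-inverse residue classes.
\item It never uses that $n$ is squarefree. For arbitrary $n$ you get $d\mid\prod_{p\mid n}(p-1)$, which still divides $\varphi(n)$.
\item The same trick with $[r]\mapsto r^2$, which kills $[r]-[-r]$, gives the $J_2(n)$ corollary without \cref{pro:equivalence_02}.
\end{enumerate}

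The paper's route instead reuses an equivalence it has already proved and keeps the divisor-pairing picture that drives the rest of that section.
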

\begin{proof}
We recall the following identity \cite[Pg.~81, (4.5)]{hyde2019polynomial}. For any positive integer $n$, it is well known that
\[
\varphi(n) = \sum_{e \mid n} \mu(e)\,\frac{n}{e}.
\]
In particular, when $n$ is squarefree, this identity reduces to
\[
\varphi(n) = \mu(n) \sum_{e \mid n} \mu(e)\, e.
\]

    By Proposition~\ref{pro:equivalence}, there exists a partition of  the set of all divisors of $n$ into pairs 
        \[
\{a_1,b_1\},\{a_2,b_2\},\ldots,\{a_\ell,b_\ell\}, \] 
such that $d\mid \mu(a_i)a_i+\mu(b_i)b_i$ for every $i$. Summing over all pairs gives
\[d\mid \sum_{i=1}^\ell  \mu(a_i)a_i+\mu(b_i)b_i = \sum_{e\mid n} \mu(e)e=\mu(n)\varphi(n).\]
 Hence $d\mid\varphi(n)$.
\end{proof}
{When $d$ is a prime number, we have the following simple corollary. 
\begin{cor} \label{cor:d-prime}
    Let $d$ be a prime number. Then $\phi_d(\psi_n)\in \phi_d(\langle [1] + [-1] \rangle)$ if and only if there exists a prime divisor $p$ of $n$ such that $d \mid p-1.$
\end{cor}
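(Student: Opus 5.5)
The plan is to get both directions from the two results just proved: \cref{pro:equivalence} for the ``if'' direction and the preceding corollary ($d \mid \varphi(n)$) for the ``only if'' direction. Throughout, $n>1$ is squarefree, as in \cref{pro:equivalence}; this is needed for that proposition and for the identity $\varphi(n)=\mu(n)\sum_{e\mid n}\mu(e)e$.

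For the ``if'' direction, suppose $p \mid n$ is a prime with $d \mid p-1$. I apply condition (2) of \cref{pro:equivalence} with $N=p$. The divisors of $p$ are $1$ and $p$, and they form the single pair $\{1,p\}$. For this pair,
\[
\mu(1)\cdot 1+\mu(p)\cdot p = 1-p \equiv 0 \pmod d .
\]
So the divisors of $N=p>1$ are paired as required, and the implication $(2)\Rightarrow(3)$ of \cref{pro:equivalence} gives $\phi_d(\psi_n)\in \phi_d(\langle [1]+[-1]\rangle)$. This direction does not use that $d$ is prime.

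For the ``only if'' direction, suppose $\phi_d(\psi_n)\in \phi_d(\langle [1]+[-1]\rangle)$. The preceding corollary gives
\[
d \mid \varphi(n)=\prod_{p\mid n}(p-1).
\]
Since $d$ is prime, Euclid's lemma says $d$ divides one of the factors, so $d \mid p-1$ for some prime $p \mid n$.

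There is no real obstacle here. The only point needing care is that primality of $d$ is used exactly once, to pass from divisibility of the product $\prod_{p\mid n}(p-1)$ to divisibility of a single factor. As a sanity check, the edge case $d=2$, $n=2$ is consistent: there $\varphi(2)=1$, and indeed no prime $p\mid 2$ has $2\mid p-1$, so both sides are false.
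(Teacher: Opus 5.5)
Your proof is correct and follows the paper's argument exactly. The ``only if'' direction comes from the preceding corollary $d \mid \varphi(n)=\prod_{p\mid n}(p-1)$ together with primality of $d$, and the ``if'' direction applies criterion (2) of \cref{pro:equivalence} with $N=p$ and the single pair $\{1,p\}$.
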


\begin{proof}
    Suppose that $\phi_d(\psi_n)\in \phi_d(\langle [1] + [-1] \rangle)$. Then $d \mid \varphi(n)=\prod_{p \mid n}(p-1).$ Since $d$ is a prime number, there exists $p$ such that $d \mid p-1.$ Conversely, suppose that $d \mid p-1$ for some prime divisor $p$ of $n.$ Then, we can apply the second criterion in \cref{pro:equivalence} for $N=p$. 
\end{proof}

}

We demonstrate \cref{pro:equivalence} by some examples. 
\begin{expl}
We first consider the case $n=15$ and $d=8.$ In this case, a partition of $\Div(15)$ that works is $\{1, 15\}, \{3, 5\}.$ In fact, we can check directly that $\phi_{8}(\psi_{15}) \in \phi_8(\langle [1]+[-1] \rangle )$. 
    \[ \psi_{15}=[1]-[3]-[5]+[15]=[1]-[3]-[-3]+[-1]=([1]+[-1])([1]-[3]) \mod{[8]}.\]
Similarly, we can check that for $n=5 \times 7$ and $d=12$, a partition that works is $\{1, 35\}, \{5, 7\}.$
\end{expl}

\begin{expl}
\label{ex:3x5x7}
We  consider $n$ with more prime factors, for example $n=3 \times 5 \times 7=105.$ Let $d=16$. The following partition works for $d=16$.
\[ \{1, 15 \}, \{3, 35 \}, \{5, 21\}, \{7, 105\}. \]
We can also check that for each $N \mid 105$ and $N\not=105$, $\phi_{12}(\psi_N)$ does not belong to $\phi_{12}(\langle [1]+[-1] \rangle ).$

Let $f=\dfrac{1-x}{x+1}$. Then, we can see that $([1]+[-1])f=0.$ \cref{thm:mahler-congruence} and \cref{pro:equivalence} would imply that $\Phi_{16}$ is a factor of $M_{105}(f).$ In fact, using Sagemath, we can check that if we write $M_{105}(f)=\dfrac{P}{Q}$ in the reduced form, then 
\[ P=\Phi_1 \Phi_3 \Phi_4 \Phi_8 \Phi_{16} \Phi_{24} R(x), \]
where $R(x) \in \Q[x].$ We can verify that, except for $d=24$, all cyclotomic factors of $M_{105}(f)$ can be explained by \cref{thm:mahler-congruence} and \cref{pro:equivalence}. The case $d=24$ will be explained in \cref{sec:galois} using explicit calculations. 
\end{expl}

We now study a similar question as \cref{pro:equivalence} for the ideal generated by $[1]-[-1].$

\begin{prop} \label{pro:equivalence_02}
   Let $n>1$ be a positive squarefree integer and  $d>1$ be a positive integer. %and $d \neq n$. 
   The following conditions are equivalent.
   \begin{enumerate}
       \item The divisors of \(n\) can be paired so that
      \(\mu(a)+\mu(b)=0\) and either \(a\equiv b \pmod d\) or $a\equiv -b\pmod d$
      for every pair \(\{a,b\}\). 
          \item  The divisors of some $N\mid n$, $N>1$, can be paired as in (1).
           \item $\phi_d(\psi_n)\in \phi_d(\langle [1] - [-1] \rangle)$.% where $I$ is the principal ideal generated by $[1]-[-1].$
   \end{enumerate}
   \end{prop}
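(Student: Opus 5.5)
We follow the proof of \cref{pro:equivalence}, with $[1]-[-1]$ in place of $[1]+[-1]$. The first step is an analogue of \cref{lem:[-1]+[1]-first}. Write $\phi_d(\alpha)=\sum_r c_r[r]$. A general element of $\phi_d(\langle [1]-[-1]\rangle)$ has the form $\sum_r m_r([r]-[-r])$. For each self-inverse class $r\equiv -r \pmod d$ (namely $r=0$, and $r=d/2$ when $d$ is even), the term $[r]-[-r]$ vanishes. Hence $\phi_d(\alpha)$ lies in the image ideal if and only if $c_r+c_{-r}=0$ for every non-self-inverse class $r$, and $c_r=0$ for every self-inverse class $r$. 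For $\alpha=\psi_n$, with $p_r,q_r$ defined as in the proof of \cref{pro:equivalence}, this condition reads
\[
p_r-q_r+p_{-r}-q_{-r}=0 \ (r\neq -r),\qquad p_r=q_r \ (r=-r).
\]

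$(1)\Rightarrow(2)$ is trivial with $N=n$. For $(2)\Rightarrow(3)$, take a pairing of the divisors of $N$. Each pair contributes $\mu(a)([a]-[b])$ to $\psi_N$. If $a\equiv b\pmod d$, its image under $\phi_d$ is $0$. If $a\equiv -b\pmod d$, its image is $\mu(a)([\bar a]-[-\bar a])=\mu(a)[\bar a]([1]-[-1])$. Hence $\phi_d(\psi_N)\in\phi_d(\langle[1]-[-1]\rangle)$. Since $n$ is squarefree, $\psi_n=\psi_N\psi_{n/N}$, and because $\phi_d(\langle[1]-[-1]\rangle)$ is an ideal, the conclusion follows exactly as before.

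The main step is $(3)\Rightarrow(1)$, which requires a combinatorial pairing. For a self-inverse class $r$ we have $p_r=q_r$, so we pair $S_r^+$ bijectively with $S_r^-$; each pair has opposite $\mu$-values and $a\equiv b$. For a non-self-inverse pair of classes $\{r,-r\}$, work inside $S_r\cup S_{-r}$. First pair elements of $S_r^+$ with elements of $S_r^-$ as far as possible, and do the same in $S_{-r}$. Suppose $p_r\ge q_r$, leaving $p_r-q_r$ unpaired positive elements in $S_r$. The equation $p_r-q_r=q_{-r}-p_{-r}$ then shows that exactly $p_r-q_r$ negative elements remain unpaired in $S_{-r}$. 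Pair these across, so each such pair has $a\equiv -b$ and opposite signs. The case $p_r<q_r$ is symmetric.

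I expect the only subtle point to be the bookkeeping in the characterization of the image ideal. One must check that self-inverse classes force $c_r=0$ rather than a parity condition. One must also check that the coefficient $m_r$ attached to $[r]$ and the one attached to $[-r]$ may be merged, giving the condition $c_r=-c_{-r}$. With the characterization correct, every divisor is used exactly once, since all the counts match.
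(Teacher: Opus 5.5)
Your proof is correct and follows the paper's argument essentially step for step. It uses the same characterization of $\phi_d(\langle[1]-[-1]\rangle)$ (namely $p_r-q_r=-(p_{-r}-q_{-r})$ for $r\neq -r$ and $p_r=q_r$ for $r=-r$), the same pairwise computation together with $\psi_n=\psi_N\psi_{n/N}$ for $(2)\Rightarrow(3)$, and a class-by-class pairing for $(3)\Rightarrow(1)$. The difference is only in the bookkeeping. The paper first cross-pairs $S_r^{+}$ with $S_{-r}^{-}$ and $S_{-r}^{+}$ with $S_r^{-}$, then pairs the leftover elements within $S_r$, whereas you pair within each class first and cross-pair the remainder; both orderings work, and you additionally justify the image-ideal characterization that the paper only asserts.
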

\begin{proof}
    $(1)\Rightarrow (2):$ Take $N := n.$ The conclusion follows immediately.
    
    $(2)\Rightarrow (3):$ Suppose that the set of divisors of $N$ can be partitioned  into pairs 
        \[
\{a_1,b_1\},\{a_2,b_2\},\ldots,\{a_\ell,b_\ell\}, \] 
such that $\mu(a_i)+\mu(b_i)=0$ and either $d\mid a_i-b_i$ or $d\mid a_i+b_i$, for every $i$.  Let $I=\langle[1]-[-1]\rangle$.
    Let
\[
\alpha_1=\sum_{d\mid a_i+b_i}(\mu(a_i)[a_i]+\mu(b_i)[b_i]),\quad\text{ and  } \quad \alpha_2=\sum\limits_{d\mid a_i-b_i}(\mu(a_i)[a_i]+\mu(b_i)[b_i]).
\]
   Clearly, $\psi_N=\sum_{a\mid N}\mu(a)[a]=\alpha_1+\alpha_2$.  
    
We have 
\[
\phi_d(\alpha_1)
 =\sum_{d\mid (a_i+b_i)}\mu(a_i)\bigl([\overline{a_i}]-[\overline{-a_i}]\bigr)
 \in\phi_d(I),
 \]
and 
$
\phi_d(\alpha_2)=\sum\limits_{d\mid a_i-b_i}(\mu(a_i)[\overline{a_i}]-\mu(b_i)[\overline{b_i}]) =0.
$
Thus
\[
\phi_d(\psi_N)=\phi_d(\alpha_1+\alpha_2)=\phi_d(\alpha_1)\in\phi_d(I).
\]
Since $\phi_d$ is a surjective ring homomorphism, $\phi_d(I)$ is an ideal of $\Z[(\Z/d)^\circ]$. 
Since $n$ is squarefree  and $N\mid n$, $\gcd(N,n/N)=1$ and  $\psi_n = \psi_N \psi_{n/N}.$ Hence
\[
\phi_d(\psi_n)=\phi_d(\psi_N)\phi_d(\psi_{n/N})\in \phi_d(I).
\]

$(3)\Rightarrow (1):$
For each residue class $r$, let
\[
S_r=\{a\mid n : a\equiv r\pmod d\},
\]
and decompose it according to the sign of $\mu$:
\[
S_r^{+}=\{a\in S_r : \mu(a)=1\}, 
\qquad
S_r^{-}=\{a\in S_r : \mu(a)=-1\}.
\]
Set $p_r = |S_r^{+}|$ and $q_r = |S_r^{-}|$.  

The condition for $\phi_d(\psi_n)\in \phi_d(I)$ is equivalent to 
\[
p_r - q_r =-(p_{-r} - q_{-r}),
\]
all non-self-inverse residue classes $r\not=-r$, and
\[
p_r - q_r= 0,
\]
for self-inverse residue classes $r=-r$.

Consider a non-self-inverse class $r$ ($r\not=-r$).
Without loss of generality, we may assume that $p_r \geq q_{-r}$. 
Then $s:=p_r-q_{-r}=q_{r}-p_{-r}\geq0$. We pair $q_{-r}$ elements in $S_r^+$ with $q_{-r}$ elements in $S_{-r}^-$.
We pair $p_{-r}$ elements in $S_{-r}^+$ with $p_{-r}$ elements in $S_{r}^-$. Clearly, for such a pair $\{a,b\}$ we have $a\equiv -b\pmod d$ and $\mu(a)=-\mu(b)$. 
The remaining $s$ elements in $S_r^{+}$ are paired with the remaining $s$ elements in $S_{r}^{-}$. For such a latter pair $\{a,b\}$ we have $a\equiv b\equiv r\pmod d$ and $\mu(a)=-\mu(b)$. 

Now consider a self-inverse class $r$ ($r=-r$). In this case $|S_r^+|=|S_r^-|$. We pair each element in $S_r^+$ with an element in $S_r^-$.  For such a pair $\{a,b\}$ we have $\mu(a)=-\mu(b)$ and $a\equiv b\equiv r\pmod d$. 
\end{proof}

\begin{cor}
If \(\phi_d(\psi_n)\in \phi_d(\langle [1]-[-1]\rangle)\), then
\[
d\mid J_2(n)=\prod_{p\mid n}(p^2-1).
\]
\end{cor}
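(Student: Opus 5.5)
The plan is to imitate the proof of the preceding corollary for $\varphi(n)$, replacing the linear weight $\mu(e)e$ by the quadratic weight $\mu(e)e^2$. The square is what makes the argument insensitive to the sign ambiguity $a\equiv \pm b \pmod d$ allowed by \cref{pro:equivalence_02}.

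First, since $n$ is squarefree and $\phi_d(\psi_n)\in \phi_d(\langle [1]-[-1]\rangle)$, the implication $(3)\Rightarrow(1)$ of \cref{pro:equivalence_02} gives a partition of the divisors of $n$ into pairs $\{a_1,b_1\},\ldots,\{a_\ell,b_\ell\}$. Each pair satisfies $\mu(a_i)=-\mu(b_i)$, and also either $a_i\equiv b_i$ or $a_i\equiv -b_i \pmod d$. In both cases $a_i^2\equiv b_i^2 \pmod d$. Hence
\[
\mu(a_i)a_i^2+\mu(b_i)b_i^2=\mu(a_i)\bigl(a_i^2-b_i^2\bigr)\equiv 0 \pmod d
\]
for every $i$.

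Next, summing over all pairs, which together exhaust the divisors of $n$, yields
\[
d \;\Big|\; \sum_{i=1}^{\ell}\bigl(\mu(a_i)a_i^2+\mu(b_i)b_i^2\bigr)=\sum_{e\mid n}\mu(e)e^2 .
\]
Because $n$ is squarefree and $e\mapsto \mu(e)e^2$ is multiplicative, this sum factors as
\[
\sum_{e\mid n}\mu(e)e^2=\prod_{p\mid n}(1-p^2)=\pm\prod_{p\mid n}(p^2-1)=\pm J_2(n).
\]
Therefore $d\mid J_2(n)$.

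I do not expect a genuine obstacle. The only point needing care is to use the squared weight, since $\mu(a)a+\mu(b)b$ need not vanish mod $d$ when $a\equiv -b \pmod d$. The identity $\sum_{e\mid n}\mu(e)e^2=\prod_{p\mid n}(1-p^2)$ for squarefree $n$ is the standard Euler-product computation.
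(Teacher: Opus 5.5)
Your proposal is correct and matches the paper's proof essentially step for step. Both use the pairing from \cref{pro:equivalence_02}, observe that $a\equiv \pm b \pmod d$ forces $\mu(a)a^2+\mu(b)b^2=\mu(a)(a^2-b^2)\equiv 0 \pmod d$, and sum over all pairs to get $d\mid \sum_{e\mid n}\mu(e)e^2=\prod_{p\mid n}(1-p^2)=\pm J_2(n)$.
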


\begin{proof}
By \cref{pro:equivalence_02}, the divisors of \(n\) can be paired so that
\[
\mu(a)+\mu(b)=0
\qquad\text{and}\qquad
a\equiv \pm b \pmod d.
\]
Hence
\[
\mu(a)a^2+\mu(b)b^2
=\mu(a)(a^2-b^2)
\equiv 0 \pmod d
\]
for every pair. Summing over all pairs yields
\[
\sum_{e\mid n}\mu(e)e^2\equiv 0 \pmod d.
\]
Since \(n\) is squarefree,
\[
\sum_{e\mid n}\mu(e)e^2
=
\prod_{p\mid n}(1-p^2)
=
(-1)^{\omega(n)}J_2(n).
\]
Therefore \(d\mid J_2(n)\).
\end{proof}

{
We have a similar corollary as \cref{cor:d-prime} with an identical proof. 

\begin{cor}
    Suppose that $d$ is a prime number. Then $\phi_d(\psi_n)\in \phi_d(\langle [1] - [-1] \rangle)$ if and only if there exists a prime divisor $p$ of $n$ such that either $d \mid p-1$ or $d \mid p+1.$
\end{cor}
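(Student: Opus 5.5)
Since $d$ is prime, I will run the argument of \cref{cor:d-prime} with \cref{pro:equivalence_02} and the preceding corollary (the $J_2$ divisibility) in place of \cref{pro:equivalence} and the $\varphi$ divisibility.

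\emph{Forward direction.} Assume $\phi_d(\psi_n)\in \phi_d(\langle [1]-[-1]\rangle)$. The previous corollary gives
\[
d \mid J_2(n)=\prod_{p\mid n}(p^2-1)=\prod_{p\mid n}(p-1)(p+1).
\]
Because $d$ is prime, it divides one of these factors. So there is a prime $p\mid n$ with $d\mid p-1$ or $d\mid p+1$.

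\emph{Converse.} Let $p\mid n$ be a prime with $d\mid p-1$ or $d\mid p+1$. I will apply criterion (2) of \cref{pro:equivalence_02} with $N=p>1$. The divisors of $p$ form the single pair $\{1,p\}$. This pair satisfies $\mu(1)+\mu(p)=1-1=0$. It also satisfies $p\equiv 1\pmod d$ in the first case and $p\equiv -1\pmod d$ in the second. Condition (2) therefore holds, and \cref{pro:equivalence_02} gives (3), namely $\phi_d(\psi_n)\in\phi_d(\langle [1]-[-1]\rangle)$. The proposition requires $n$ squarefree and $n>1$; these are the standing hypotheses, and $n>1$ is automatic once $n$ has a prime divisor.

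I expect no real obstacle. The only point needing care is that the forward direction uses primality of $d$ to pass from $d\mid\prod_{p\mid n}(p-1)(p+1)$ to $d$ dividing a single factor $p\pm 1$. The converse is immediate from (2)$\Rightarrow$(3).
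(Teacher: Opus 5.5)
Your proposal is correct and is exactly the paper's argument: the paper proves this corollary with the same proof as \cref{cor:d-prime}, namely the forward direction via $d \mid J_2(n)=\prod_{p\mid n}(p-1)(p+1)$ plus primality of $d$, and the converse via criterion (2) of \cref{pro:equivalence_02} with $N=p$.
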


}

\section{Signed Mahler algebra, symmetries, and cyclotomic factors of rational necklace functions} \label{sec:signed-mahler}

In this section, we introduce a variant, the signed Mahler algebra, of the Mahler algebra described in \cref{sec:mahler}. This variant is based on the observation that certain symmetries of $f$ can only be described by the signed Mahler algebra. For example, let us consider $f_2 = \dfrac{x}{1+x^2}$. Then 
\[ f_2(x)+f_2 \left(-\frac{1}{x} \right)=0.\]
The presence of the negative sign shows that we cannot use $\Z[\Z^{\circ}]$ to describe this symmetry. We can overcome this by introducing a sign component to the Mahler algebra $\Z[\Z^{\circ}]$ with one small modification: we can only take odd exponents (a similar signed Mahler algebra in \cite{hyde2019polynomial} has to deal with the same exact issue). More precisely, we have the following definition.

\begin{definition}
Let $\Z^{\odd}$ be the monoid of all odd integers under multiplication. Let $\{\pm{1} \}$ be the group of order $2$ generated by $-1.$  We define the signed Mahler algebra to be the algebra $R^{\pm}:=\Z[\{\pm 1\} \times \Z^{\odd}].$ 
\end{definition}
The signed Mahler algebra acts on $\Q(x)$ by the following rule: for a basis element $\alpha = (\epsilon, m)$
where $\epsilon \in \{\pm{1}\}$ 
\[ \alpha(f)=f(\epsilon x^m) = f((\epsilon x)^m). \]
The second equality follows from the fact that $m$ is odd (this is the reason we restrict ourselves to $\Z^{\odd}$). We  note that there is a natural embedding of $\Z[\Z^{\odd}]$ into $\Z[\{\pm 1\} \times \Z^{\odd}]$ defined by sending $[e] \mapsto [(1,e)].$ In particular, if $n$ is odd, then we can consider $\psi_n$ as an element of  $\Z[\{\pm 1\} \times \Z^{\odd}]$. 

\begin{expl}
    Let $\alpha=[(1, 1)]+[(-1, 1)]$. Then $\alpha f (x)=f(x)+f(-x)$. Therefore, $\alpha f = 0$ if and only if $f$ is an odd rational function. This is precisely the case studied in \cite[Section 4.2.1]{hyde2019polynomial}. 
\end{expl}

Here is another example where the symmetry of $f$ is somewhat more interesting. 
\begin{expl}
    Let $\alpha = [(-1, 1)]$. Then, $\alpha x^n = -x^n$ ($n$ is odd). On the other hand, if $\alpha = [(-1, -1)]$ then $\alpha x^n = -x^{-n}= -\dfrac{1}{x^n}.$ We see that this signed Mahler algebra can describe functional equations such that $f(x)+f(-1/x)=0.$
\end{expl}

The following proposition generalizes the natural projection map $\phi_d\colon \Z[\Z^{\circ}] \to \Z[(\Z/d)^{\circ}].$ To do so, we make the assumption that $d$ is a multiple of $4$ and $d_0$ is an even integer such that $d=2d_0.$ We will make this assumption throughout this section. 
\begin{prop}
\label{prop:0mod-rational_signed}
Let $\theta_d\colon \{\pm 1 \} \times \Z^{\odd} \to (\Z/d)^{\circ} $ be the map defined by
 \[ \theta_d((-1, m))= \overline{d_0+m}, \quad \theta_d((1,m))= \overline{m}, \forall m \in \Z^{\odd}. \]
 Then $\theta_d$ is a monoid homomorphism. 
\end{prop}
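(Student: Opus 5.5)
We must show that $\theta_d$ respects the monoid multiplication on $\{\pm1\}\times\Z^{\odd}$, namely $(\epsilon,m)(\epsilon',m')=(\epsilon\epsilon',mm')$, and sends the identity $(1,1)$ to $\overline{1}$. The identity condition holds by definition. For multiplicativity, we first put $\theta_d$ in a uniform form. Write $\epsilon=(-1)^e$ with $e\in\{0,1\}$. Then
\[
\theta_d((\epsilon,m))=\overline{m+e\,d_0}.
\]
The key observation is that, modulo $d=2d_0$, adding $d_0$ to an odd number equals multiplying it by $1+d_0$. Indeed, for odd $m$,
\[
m(1+d_0)-(m+d_0)=(m-1)d_0,
\]
and $m-1$ is even, so $(m-1)d_0\equiv 0\pmod{2d_0}$. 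Hence
\[
\theta_d((\epsilon,m))=\overline{(1+d_0)^{e}\,m}.
\]

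Next we check that $u:=\overline{1+d_0}$ satisfies $u^2=\overline{1}$ in $\Z/d$. We have $(1+d_0)^2=1+2d_0+d_0^2$. Here $2d_0=d$, and $d_0^2=d\cdot (d_0/2)$ because $d_0$ is even (this is exactly where $4\mid d$ is used). Therefore the map $\{\pm1\}\to(\Z/d)^\circ$ given by $(-1)^e\mapsto u^e$ is a well-defined monoid (indeed group) homomorphism.

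Finally, $\theta_d$ is the product of two homomorphisms into the commutative monoid $(\Z/d)^\circ$: the sign map $\epsilon\mapsto u^{e}$ and the reduction map $m\mapsto\overline m$ on $\Z^{\odd}$. Such a product is again a monoid homomorphism, since
\[
\theta_d((\epsilon\epsilon',mm'))=u^{e+e'}\,\overline{mm'}=\bigl(u^e\overline m\bigr)\bigl(u^{e'}\overline{m'}\bigr).
\]
The exponent $e+e'$ may be read modulo $2$ because $u^2=\overline{1}$.

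The only real obstacle is the reformulation of $\theta_d$ as $m\mapsto u^e m$. Once we see that $d_0+m\equiv(1+d_0)m \pmod d$ for odd $m$, which needs $d_0$ even for the step $u^2=\overline{1}$ and $m$ odd for the first reformulation, everything else is formal. As a fallback, one could verify the four sign cases directly. The hardest of these is $(-1,m)(-1,m')$, which requires $(d_0+m)(d_0+m')\equiv mm' \pmod d$. This follows from $d_0^2\equiv 0$ and $d_0(m+m')\equiv 0\pmod{2d_0}$, the latter since $m+m'$ is even.
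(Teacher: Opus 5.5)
Your argument is correct, but you organize it differently from the paper. The paper checks the two nontrivial sign cases by direct expansion. First, $(d_0+m)(d_0+n)\equiv mn \pmod d$ because $d_0^2\equiv 0$ and $d_0(m+n)\equiv 0$. Second, $(d_0+m)n\equiv d_0+mn \pmod d$ because $d_0n\equiv d_0$ for odd $n$. The case of two plus signs is trivial, and the other mixed case follows by commutativity.

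You instead rewrite $\theta_d((\epsilon,m))=u^{e}\,\overline{m}$ with $u=\overline{1+d_0}$. This rests on the same congruence $d_0(m-1)\equiv 0 \pmod d$ that drives the paper's second case. You then check $u^2=\overline{1}$, which packages the paper's first computation. The upshot is that $\theta_d$ becomes the product of the character $\{\pm 1\}\to(\Z/d)^\times$, $-1\mapsto\overline{1+d_0}$, with reduction modulo $d$ on $\Z^{\odd}$. Multiplicativity is then formal, all sign cases are handled at once, and so is the identity axiom, which the paper leaves implicit. It also cleanly separates the roles of ``$m$ odd'' and ``$d_0$ even''.

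The paper's route is shorter and purely computational; yours is more structural. Your form also shows that $f(-\zeta_d^m)=\sigma\bigl(f(\zeta_d^m)\bigr)$ for odd $m$, where $\sigma$ is the Galois automorphism $\zeta_d\mapsto\zeta_d^{d/2+1}=-\zeta_d$. This is the same automorphism the paper uses later in part (b) of \cref{thm:Galois}.
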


\begin{proof}
    Let $m,n \in \Z^{\odd}.$ We will show that 
    \[ \theta_d((-1,m)) \theta_d((-1, n))= \theta_d((1, mn)),\]
    and 
    \[ \theta_d((-1,m)(1, n))= \theta_d((-1, mn)) .\] 
For the first equality, we have 
\[ \theta_d((-1,m)) \theta_d((-1,n))=\overline{(d_0+m)(d_0+n)} = \overline{d_0^2 +d_0(m+n) + mn}.\]
Since $d_0$ is even, $d_0^2 \equiv 0 \pmod{d}.$ Additionally, since $m,n$ are both odd, $d_0(m+n) \equiv 0 \pmod{d}.$ We conclude that
\[ \theta_d((-1,m)) \theta_d((-1,n)) = \overline{mn} = \theta_d((1,mn)). \] 
%The second equality follows from the same argument. 
For the second equality, we have 
\[ \theta_d((-1,m)) \theta_d((1,n))= \overline{(d_0+m)n} = \overline{d_0n +mn}.\]
Since $n$ is odd, $d_0n\equiv d_0\pmod d$. Hence $d_0n+mn\equiv d_0+mn\pmod d$ and we conclude that
\[ \theta_d((-1,m)) \theta_d((1,n)) = \overline{d_0+mn} = \theta_d((-1,mn)).
\qedhere\] 
\end{proof}
Let $\theta_d\colon \Z[\{\pm 1 \} \times \Z^{\odd}] \to \Z[(\Z/d)^{\circ}]$ be the induced map on the ring level. 

The definition of the support of an element in $\Z[\{\pm 1 \} \times \Z^{\odd}]$ is defined naturally. For convenience, for $(\epsilon,m)\in \{\pm 1 \} \times \Z^{\odd}$ and $\xi\in \C$, we define $\xi^{(\epsilon,m)}=\epsilon \xi^m$.  
\begin{prop} \label{prop:0mod-rational_signed}
    Let $\alpha\in  \Z[\{\pm 1 \} \times \Z^{\odd}]$, $d=2d_0$ with $d_0$ even and $f(x)\in \Q(x)$ a rational function. Suppose that $\theta_d(\alpha)=0$ and $f(x)$ is defined at $\zeta_d^{(\epsilon,m)}$, for every $(\epsilon,m) \in \supp(\alpha)$. Then $\Phi_d$ is a factor of $\alpha f$.
\end{prop}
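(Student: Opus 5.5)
The plan is to mimic the proof of \cref{prop:0mod-rational}. The one new ingredient is a compatibility between the map $\theta_d$ and evaluation at $\zeta_d$. Namely, for every $(\epsilon,m)\in\{\pm1\}\times\Z^{\odd}$ we have
\[
\zeta_d^{(\epsilon,m)}=\zeta_d^{\,t}, \qquad \text{where } t \text{ is any representative of } \theta_d((\epsilon,m)).
\]
For $\epsilon=1$ this is immediate. For $\epsilon=-1$ we use $d=2d_0$, which gives $\zeta_d^{d_0}=-1$, and hence
\[
\zeta_d^{(-1,m)}=-\zeta_d^m=\zeta_d^{d_0}\zeta_d^m=\zeta_d^{d_0+m}.
\]
Since $\zeta_d^{t}$ depends only on $t \bmod d$, the value $\zeta_d^{(\epsilon,m)}$ depends only on the class $\theta_d((\epsilon,m))\in(\Z/d)^\circ$. (The hypothesis that $d_0$ is even is what makes $\theta_d$ a homomorphism; it is not needed for this pointwise identity.)

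With this in hand, write $\alpha=\sum_{g\in\supp(\alpha)} a_g[g]$. By the lemma on values of $\alpha f$ (extended verbatim to the signed action, since $[g]f = f(\epsilon x^m)$), $\alpha f$ is defined at $\zeta_d$, and
\[
\alpha f(\zeta_d)=\sum_{g\in\supp(\alpha)} a_g\, f(\zeta_d^{g}).
\]
This uses the hypothesis that $f$ is defined at each $\zeta_d^{g}$. Next we group the terms according to the residue class $r=\theta_d(g)\in(\Z/d)^\circ$. By the compatibility identity, every term in class $r$ equals $a_g f(\zeta_d^{r})$. The hypothesis $\theta_d(\alpha)=0$ says exactly that $\sum_{\theta_d(g)=r} a_g=0$ for each $r$. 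Hence each group sums to $0$, so $\alpha f(\zeta_d)=0$. By \cref{def defined}, this means $\Phi_d$ is a factor of $\alpha f$.

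The only step requiring care is checking that the signed evaluation $\xi^{(\epsilon,m)}=\epsilon\xi^m$ matches $\theta_d$ at $\xi=\zeta_d$; this is the $\zeta_d^{d_0}=-1$ computation above. A secondary point is to confirm that the definedness lemma applies to the signed action. Its proof is the same, because $[(\epsilon,m)]f$ is the rational function $f(\epsilon x^m)$, which is defined at $\xi$ whenever $f$ is defined at $\epsilon\xi^m$.
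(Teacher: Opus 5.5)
Your proof is correct and is essentially the paper's argument. The paper uses $\zeta_d^{d/2}=-1$ to build an auxiliary element $\gamma\in\Z[\Z^\circ]$ with $\alpha f(\zeta_d)=\gamma f(\zeta_d)$ and $\phi_d(\gamma)=\theta_d(\alpha)=0$, then cites \cref{prop:0mod-rational}; you instead inline that proposition's grouping-by-residue-class argument directly, and your remark that evenness of $d_0$ is not needed for the pointwise identity is accurate.
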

\begin{proof} Let $\zeta=\zeta_d$. We may write
\[
\alpha
= \sum_{(1,m)\in S_1} c_m[(1, m)]  +\sum_{(-1,n)\in S_2} d_n [(-1, n)],
\]
where $c_m, d_n \in \mathbb{Z},\; m, n \in \mathbb{Z}^{\mathrm{odd}}$, and $\supp(\alpha)=S_1\sqcup S_2$. 
  
Define
\[
\gamma
:= \sum_{(1,m)\in S_1} c_m [m]
+ \sum_{(-1,n)\in S_2} d_n \left[n + \frac{d}{2}\right]
\in \mathbb{Z}[\mathbb{Z}^{\circ}].
\]
Since \(d\) is even, we have \(\zeta^{\,d/2} = -1\). 
Note also that $f$ is defined at $\zeta^a$, for each $a\in \supp(\gamma)$.  If $a=m\in \supp(\gamma)$, then definedness follows from the hypothesis on $(1,m)\in S_1$. 
If $a=n+d/2\in \supp(\gamma)$, then $\zeta^a =-\zeta^n$, and the definedness follows from the hypothesis on $(-1,n)\in S_2$.

We have
\[
\begin{aligned}
\alpha f(\zeta)
&= \left( \sum_{(1,m)\in S_1} c_m[(1, m)]  +\sum_{(-1,n)\in S_2} d_n [(-1, n)] \right) f(\zeta) \\
&= \sum_{(1,m)\in S_1} c_m f(\zeta^m) +\sum_{(-1,n)\in S_2} d_n f(-\zeta^n)\\
&= \sum_{(1,m)\in S_1} c_m f(\zeta^m)  +\sum_{(-1,n)\in S_2} d_n f(\zeta^{n+d/2}) \\
&= \left( \sum_{(1,m)\in S_1} c_m [m]
+ \sum_{(-1,n)\in S_2} d_n \left[n + \frac{d}{2}\right] \right) f(\zeta) \\
&= \gamma f(\zeta).
\end{aligned}
\]
 
Moreover, in $\Z[(\Z/d)^\circ]$ we have  
\[
0=\theta_d (\alpha) =\sum_{(1,m)\in S_1} c_m [m] +  \sum_{(-1,n)\in S_2} d_n [n+d/2]=\phi_d(\gamma),
\]
where \(\phi_d\) is the natural homomorphism $\phi_d\colon \Z[\Z^\circ]\to \Z[(\Z/d)^\circ]$. 
By Proposition~\ref{prop:0mod-rational}, it follows that \(\gamma f(\zeta) = 0\).
\end{proof}

We have the following theorem which is a direct analog of \cref{thm:mahler-congruence_revised}.

\begin{thm} \label{thm:mahler-congruence_signed}
%    Let $I\subset R^{\pm}=\Z[\{\pm 1\}\times\Z^{\odd}]$ be an ideal and  $f\in \Q(x)$ with $I\subset  \Ann_{\Z[\{\pm 1\}\times\Z^{\odd}]}(f)$. 
Let $f\in \Q(x)$ and $\alpha \in \Z[\{\pm 1\}\times\Z^{\odd}]$. Suppose there exists $\beta\in \Ann_{\Z[\{\pm 1\}\times\Z^{\odd}]}(f)$ such that $\theta_d(\alpha)=\theta_d(\beta)$ and $f$ is defined at $\zeta_d^{(\epsilon,m)}$ for each $(\epsilon,m) \in \supp(\alpha)\cup \supp(\beta)$. Then $\Phi_d$ is a factor of $\alpha f$. 
\end{thm}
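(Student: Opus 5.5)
The argument mirrors the proof of \cref{thm:mahler-congruence_revised}, with \cref{prop:0mod-rational_signed} playing the role that \cref{prop:0mod-rational} played there. Throughout, $d=2d_0$ with $d_0$ even, as fixed at the start of this section.

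First, since $\beta\in \Ann_{\Z[\{\pm 1\}\times\Z^{\odd}]}(f)$, we have $\beta f=0$ in $\Q(x)$. The action of $R^{\pm}$ on $\Q(x)$ is additive, so $\alpha f=(\alpha-\beta)f$ as rational functions. Consequently, $\Phi_d$ is a factor of $\alpha f$ if and only if it is a factor of $(\alpha-\beta)f$: both statements concern the same element of $\Q(x)$, and the notion of factor in \cref{def defined} depends only on that element.

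Next, I would check the two hypotheses of \cref{prop:0mod-rational_signed} for $\gamma:=\alpha-\beta$.
\begin{enumerate}
\item Because $\theta_d$ is a ring homomorphism, $\theta_d(\gamma)=\theta_d(\alpha)-\theta_d(\beta)=0$.
\item Every basis element appearing in $\gamma$ with nonzero coefficient appears in $\alpha$ or in $\beta$, so $\supp(\gamma)\subseteq \supp(\alpha)\cup\supp(\beta)$. By hypothesis, $f$ is therefore defined at $\zeta_d^{(\epsilon,m)}=\epsilon\zeta_d^m$ for every $(\epsilon,m)\in\supp(\gamma)$.
\end{enumerate}
\cref{prop:0mod-rational_signed} then gives that $\Phi_d$ is a factor of $\gamma f=\alpha f$.

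The only point needing any care is the well-definedness issue in the first step. The equality $\alpha f=(\alpha-\beta)f$ holds in $\Q(x)$, and $\alpha f$ may have cancellations making it defined at $\zeta_d$ even when individual terms are not. This is harmless: once \cref{prop:0mod-rational_signed} shows that $(\alpha-\beta)f$ is defined at $\zeta_d$ and vanishes there, the same holds for $\alpha f$, since they are the same rational function. So the proof is essentially formal, and I expect no real obstacle.
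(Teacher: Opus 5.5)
Your proof is correct and matches the paper's own argument. You use $\beta f=0$ to rewrite $\alpha f$ as $(\alpha-\beta)f$, note that $\theta_d(\alpha-\beta)=0$ and $\supp(\alpha-\beta)\subseteq\supp(\alpha)\cup\supp(\beta)$, and then apply the signed vanishing criterion for elements $\gamma$ with $\theta_d(\gamma)=0$ (you also spell out the additivity of $\theta_d$, which the paper leaves implicit).
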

\begin{proof}
    Since $\beta\in \Ann_{\Z[\{\pm 1\}\times\Z^{\odd}]}(f)$, $\beta f=0$ and hence $\alpha f=(\alpha-\beta )f$ in $\Q(x)$. 
    Note that $\supp(\alpha-\beta)\subset \supp(\alpha)\cup \supp(\beta)$. Hence $f$ is defined at $\zeta_d^{(\epsilon,m)}$ for each $(\epsilon,m) \in \supp(\alpha-\beta)$. By Proposition~\ref{prop:0mod-rational_signed}, $\Phi_d$ is a factor of $(\alpha-\beta )f=\alpha f$.
\end{proof}

{Let us demonstrate \cref{thm:mahler-congruence_signed} by a concrete example. This example illustrates a cyclotomic factor that cannot be explained using the ordinary Mahler algebra $R=\Z[\Z^\circ]$, but is explained by the signed Mahler algebra $R^\pm$.

\begin{expl}
Let \(f(x)=\dfrac{x}{1+x^2}\). Then both $[(1,1)]-[(1,-1)]$ and     $[(1,1)]+[(-1,1)]$ belong to \(\Ann_{R^{\pm}}(f)\), since
\[
    f(x)-f(1/x)=0
    \qquad\text{and}\qquad
    f(x)+f(-x)=0.
\]
Consider
\[
    \psi_{35}=[(1,1)]-[(1,5)]-[(1,7)]+[(1,35)],
\]
where we view \(\psi_{35}\) as an element of \(R^\pm\) via the embedding
\([m]\mapsto [(1,m)]\). Let
\[
    \beta=[(1,1)]+[(-1,-1)]-[(1,5)]-[(-1,-5)].
\]
Then \(\beta\in \Ann_{R^\pm}(f)\). Moreover,
\[
    \theta_{24}(\psi_{35})
    =[1]-[5]-[7]+[11]
    =\theta_{24}(\beta).
\]
Consequently, \cref{thm:mahler-congruence_signed} implies that \(\Phi_{24}\) is a factor of $M_{35}\left(\dfrac{x}{x^2+1}\right).$
In fact, if we write
$M_{35}\left(\dfrac{x}{x^2+1}\right)=\frac{P}{Q}$ where $ \gcd(P,Q)=1$,
then \(P\) has the following factorization: \[P=\Phi_1\Phi_2\Phi_3\Phi_6\Phi_8\Phi_{24}G,\]
where \(G\) is an irreducible polynomial of degree \(48\). It is important to remark that if we
only consider the Mahler algebra \(R=\Z[\Z^\circ]\), then \(\Ann_R(f)\) contains
\([1]-[-1]\). Using \cref{pro:equivalence_02}, we can check that
\[
    \phi_{24}(\psi_{35})\notin \phi_{24}(I),
\]
where \(I\) is the ideal in \(R\) generated by \([1]-[-1]\). In other words, the Mahler algebra
\(R\) is not sufficient to explain the factor \(\Phi_{24}\).
\end{expl}
}

\begin{lem}
Let
\( \alpha=\sum\limits_{(\epsilon,m)\in \supp(\alpha)} a_{\epsilon,m}[(\epsilon,m)]\in R^\pm. \)
For each residue class $r\in (\mathbb Z/d\mathbb Z)^\circ$, define
\(
c_r=\sum\limits_{\substack{(\epsilon,m)\in \supp(\alpha)\\
\theta_d(\epsilon,m)=r}}
a_{\epsilon,m}.
\)
Then
\( \theta_d(\alpha)\in \theta_d(\langle [(1,1)]+[(-1,1)]\rangle) \) 
if and only if
\( c_r=c_{r+d_0} \)
for every residue class $r\in \mathbb (Z/d)^\circ$.
\end{lem}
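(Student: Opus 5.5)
The plan is to compute the image $\theta_d(\langle [(1,1)]+[(-1,1)]\rangle)$ explicitly, as in the proof of \cref{lem:[-1]+[1]-first}, and then compare coefficients. Throughout, $\theta_d$ denotes the ring map $R^\pm\to \Z[(\Z/d)^\circ]$ induced by the monoid homomorphism of \cref{prop:0mod-rational_signed}. Expanding $\alpha$ in the basis, we get
\[
\theta_d(\alpha)=\sum_{r\in(\Z/d)^\circ} c_r[r].
\]
Also $\theta_d([(1,1)]+[(-1,1)])=[1]+[1+d_0]$.

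\textbf{Step 1 (the image is an ideal of the image ring).} $\theta_d$ is a ring homomorphism, but it is not surjective onto $\Z[(\Z/d)^\circ]$. For every $(\epsilon,m)$, the residue $\theta_d((\epsilon,m))$ is odd, because $m$ is odd and $d_0$ is even. Hence
\[
\theta_d(\langle [(1,1)]+[(-1,1)]\rangle)=\theta_d(R^\pm)\cdot([1]+[1+d_0]).
\]
Every element of this set is an integral combination of the products $\theta_d((\epsilon,m))\cdot([1]+[1+d_0])$. Writing $s=\theta_d((\epsilon,m))$, each product equals $[s]+[s(1+d_0)]$.

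\textbf{Step 2 (simplify $s(1+d_0)$).} For odd $s$ we have $s(1+d_0)-(s+d_0)=(s-1)d_0$. Since $s-1$ is even, this is divisible by $2d_0=d$, so $s(1+d_0)\equiv s+d_0 \pmod d$. Every odd residue $s$ occurs as some $\theta_d((1,s))$. Therefore
\[
\theta_d(I)=\Bigl\{\sum_{s \text{ odd}} m_s\bigl([s]+[s+d_0]\bigr) : m_s\in\Z\Bigr\}.
\]
This computation is the only step requiring care; the rest is bookkeeping.

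\textbf{Step 3 (compare coefficients).} Since $d_0\not\equiv 0\pmod d$ and $2d_0\equiv 0\pmod d$, translation by $d_0$ is a fixed-point-free involution on $(\Z/d)^\circ$. It preserves parity, so the odd residues split into disjoint pairs $\{s,s+d_0\}$. An element $\sum_r e_r[r]$ therefore lies in $\theta_d(I)$ exactly when two conditions hold: $e_r=0$ for every even $r$, and $e_s=e_{s+d_0}$ for every odd $s$.

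For $\theta_d(\alpha)$, the first condition is automatic, since $c_r=0$ for even $r$ by Step 1. For even $r$ the residue $r+d_0$ is also even, so $c_r=c_{r+d_0}=0$ holds trivially. Hence $\theta_d(\alpha)\in\theta_d(I)$ if and only if $c_r=c_{r+d_0}$ for all $r$.

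For the converse direction, given these equalities, choose one representative $s$ from each pair and set $m_s=c_s$. Then $\sum_s m_s([s]+[s+d_0])=\theta_d(\alpha)$, as required.
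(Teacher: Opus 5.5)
Your proof is correct and follows the same route as the paper: you describe $\theta_d(\langle [(1,1)]+[(-1,1)]\rangle)$ as the $\Z$-span of $[s]+[s+d_0]$ over odd residues $s$, using $s(1+d_0)\equiv s+d_0 \pmod d$ for odd $s$, and then compare coefficients with $\theta_d(\alpha)=\sum_r c_r[r]$. You are slightly more careful than the paper in three places: you state that $\theta_d$ is not surjective, so only odd multipliers $[s]$ occur; you note that the condition at even residues holds trivially; and you spell out the pairing argument for the converse.
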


\begin{proof}
    Clearly, we have $\theta_d(\alpha)=\sum c_r[r]$. Note also that for any $\beta\in R^\pm$, $\phi_d(\beta)= \sum_{r} m_r[r]$ is supported on odd residue classes. (Since $d$ is even, the notion of odd residue classes makes sense.) 
   Since $\phi_d([(1,1)]+[(-1,1)] )=[1]+[d_0+1]$ an arbitrary element in $\phi_d( \langle [(1,1)]+[(-1,1)] \rangle)$ is of the form
       \[
       \left(\sum_{r} m_r [r]\right) ([1] + [d_0+1]) = \sum_{r} m_r ([r] + [rd_0+r]) = \sum_{r} m_r ([r] + [d_0+r]).
       \]
       Here we note that for $r$ is odd then $rd_0+r\equiv d_0+r\pmod d$. 
       From this we see that $\phi_d(\alpha)\in \phi_d(\langle [(1,1)]+[(-1,1)] \rangle)$ if and only if $c_r=c_{r+d_0}$ for every residue class $r$.
\end{proof}

\begin{lem}
Let $\alpha\in R^\pm$. Suppose that $\theta_d(\alpha)\in \theta_d(\langle[(1,1)]+[(-1,1)] \rangle)$. Then there exists $\beta\in \langle [(1,1)]+[(-1,1)] \rangle$ such that $\theta_d(\alpha)=\theta_d(\beta)$ and $\supp(\beta)\subseteq \supp(\alpha)\cup(-\supp(\alpha))$. Here, $-\supp(\alpha)=\{(-\epsilon,a)\mid  (\epsilon,a)\in \supp(\alpha)\}.$
\end{lem}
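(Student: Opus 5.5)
The plan is to mirror the proof of \cref{lem:[-1]+[1]-second}, using the characterization from the previous lemma in place of \cref{lem:[-1]+[1]-first}. Write $\alpha=\sum_{(\epsilon,m)\in\supp(\alpha)} a_{\epsilon,m}[(\epsilon,m)]$ and set $c_r=\sum_{\theta_d(\epsilon,m)=r} a_{\epsilon,m}$. Since $\theta_d(\alpha)$ lies in the image of $\langle [(1,1)]+[(-1,1)]\rangle$, the previous lemma gives $c_r=c_{r+d_0}$ for every residue class $r$. Moreover, $c_r=0$ unless $r$ is odd, because $\theta_d$ only takes odd values when $d$ is even and $m$ is odd. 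Since $d_0\neq 0 \pmod d$ and $2d_0\equiv 0\pmod d$, the odd residues split into pairs $\{r,r+d_0\}$ with $r\neq r+d_0$. Hence no analogue of the self-inverse classes (which forced the parity condition in the unsigned case) occurs here.

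The key observation is that for any $(\epsilon,m)$ we have
\[
\theta_d\bigl([(\epsilon,m)]+[(-\epsilon,m)]\bigr)=[r]+[r+d_0],
\]
where $r=\theta_d(\epsilon,m)$. This holds because switching the sign shifts the residue by $d_0$, and $2d_0\equiv 0\pmod d$. Also $[(\epsilon,m)]+[(-\epsilon,m)]=[(\epsilon,m)]\bigl([(1,1)]+[(-1,1)]\bigr)$, since $(-1,1)(-1,1)=(1,1)$. So this element lies in the ideal.

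Now build $\beta$ pair by pair. Take a pair $\{r,r+d_0\}$ of odd classes with $c_r\neq 0$. Some element of $\supp(\alpha)$ must map to $r$ or to $r+d_0$, since otherwise both sums would vanish. Choose such an $(\epsilon_r,m_r)\in\supp(\alpha)$ and set
\[
\beta_{\{r,r+d_0\}}=c_r\bigl([(\epsilon_r,m_r)]+[(-\epsilon_r,m_r)]\bigr).
\]
Let $\beta$ be the sum of these terms over all such pairs. Then $\beta$ lies in the ideal. Its image under $\theta_d$ is $\sum c_r([r]+[r+d_0])$ over the pairs, and this equals $\sum_r c_r[r]=\theta_d(\alpha)$ because $c_r=c_{r+d_0}$. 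Finally, $\supp(\beta)\subseteq \supp(\alpha)\cup(-\supp(\alpha))$ by construction.

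The only point needing care is the bookkeeping: each unordered pair must be used once, and a chosen representative mapping to $r+d_0$ rather than $r$ must still produce the same contribution. Both are handled by the symmetry of the expression $[r]+[r+d_0]$ under $r\mapsto r+d_0$.
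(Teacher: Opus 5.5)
Your proposal is correct and follows essentially the same route as the paper. Both arguments pair the residue classes as $\{r,r+d_0\}$, use $c_r=c_{r+d_0}$ from the previous lemma, pick a representative $(\epsilon_r,m_r)\in\supp(\alpha)$ for each pair with $c_r\neq 0$, and set $\beta=\sum c_r\bigl([(\epsilon_r,m_r)]+[(-\epsilon_r,m_r)]\bigr)$. Your added remarks are harmless refinements: there are no self-paired classes, and the representative may map to either class of the pair (the paper always picks one mapping to $r$, which exists since $c_r\neq 0$).
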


\begin{proof}
Write
\(
\alpha=\sum\limits_{(\epsilon,m)\in \Supp(\alpha)}
a_{\epsilon,m}[(\epsilon,m)].
\)
For each residue class $r$, set
\(
c_r=\sum\limits_{\theta_d(\epsilon,m)=r}a_{\epsilon,m}.\) 
By the previous lemma,
\(
c_r=c_{r+d_0}.
\)

For each pair of residue classes \(\{r,r+d_0\}\)
with $c_r\neq 0$, we choose a representative \((\epsilon_r,m_r)\in \Supp(\alpha)\)
such that
\( \theta_d(\epsilon_r,m_r)=r.\)
Define \( \beta_r =c_r\left([(\epsilon_r,m_r)]+[(-\epsilon_r,m_r)]\right).\)
Then $\beta_r\in \langle [(1,1)]+[(-1,1)] \rangle$, and
\(
\theta_d(\beta_r) = c_r\big([r]+[r+d_0]\big). \)

Now set
\( \beta=\sum\limits_{\{r,r+d_0\}}\beta_r.\)
Then $\beta\in \langle [(1,1)]+[(-1,1)] \rangle$ and
\(
\theta_d(\beta)=\theta_d(\alpha).
\)
Moreover, by construction,
$\supp(\beta)\subseteq \supp(\alpha)\cup(-\supp(\alpha))$.
\end{proof}

\begin{thm} \label{thm:mahler-congruence_signed_01}
     Suppose $\alpha \in R^\pm$ such that $\theta_d(\alpha) \in \theta_d(\langle [(1,1)]+[(-1,1)] \rangle).$ Let $f \in \Q(x)$ such that $[(1,1)]+[(-1,1)]\in \Ann_{R^\pm}(f)$. If $f$ is defined at $\zeta_d^{(\epsilon,a)}$ for each $(\epsilon,a) \in \supp(\alpha)$, then $\Phi_d$ is a factor of $\alpha f$. 
\end{thm}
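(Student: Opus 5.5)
The argument mirrors the proof of \cref{thm:mahler-congruence}, with the previous lemma replacing \cref{lem:[-1]+[1]-second} and \cref{thm:mahler-congruence_signed} replacing \cref{thm:mahler-congruence_revised}. Throughout, $d=2d_0$ with $d_0$ even, as fixed in this section.

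\textbf{Step 1: extend definedness to $-\supp(\alpha)$.} Since $[(1,1)]+[(-1,1)]\in \Ann_{R^\pm}(f)$, we have $f(x)+f(-x)=0$ in $\Q(x)$, so $f(-x)=-f(x)$. A point $\xi$ is a pole of $f(-x)$ exactly when $-\xi$ is a pole of $f$. Hence $f$ is defined at $\xi$ if and only if it is defined at $-\xi$. For $(\epsilon,a)\in\supp(\alpha)$ the hypothesis says $f$ is defined at $\zeta_d^{(\epsilon,a)}=\epsilon\zeta_d^a$. Therefore $f$ is also defined at $-\epsilon\zeta_d^a=\zeta_d^{(-\epsilon,a)}$. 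So $f$ is defined at $\zeta_d^{(\epsilon,a)}$ for every $(\epsilon,a)\in\supp(\alpha)\cup(-\supp(\alpha))$, where $-\supp(\alpha)$ is as in the previous lemma.

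\textbf{Step 2: choose $\beta$ with controlled support.} By the previous lemma, there is $\beta\in\langle [(1,1)]+[(-1,1)]\rangle$ with $\theta_d(\beta)=\theta_d(\alpha)$ and $\supp(\beta)\subseteq\supp(\alpha)\cup(-\supp(\alpha))$. The ideal $\langle [(1,1)]+[(-1,1)]\rangle$ lies in $\Ann_{R^\pm}(f)$, because $\Ann_{R^\pm}(f)$ is an ideal containing the generator. So $\beta\in\Ann_{R^\pm}(f)$.

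\textbf{Step 3: conclude.} We have $\supp(\alpha)\cup\supp(\beta)\subseteq\supp(\alpha)\cup(-\supp(\alpha))$, and by Step 1 $f$ is defined at $\zeta_d^{(\epsilon,m)}$ for every element of this set. All hypotheses of \cref{thm:mahler-congruence_signed} are now satisfied, so $\Phi_d$ is a factor of $\alpha f$.

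The only step needing care is Step 1, the symmetry of definedness under $\xi\mapsto-\xi$. It follows directly from the functional equation $f(-x)=-f(x)$, since the two sides have the same poles.
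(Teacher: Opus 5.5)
Your proof is correct and follows the paper's argument step for step. You use $f(-x)=-f(x)$ to extend definedness from $\supp(\alpha)$ to $\supp(\alpha)\cup(-\supp(\alpha))$, invoke the preceding lemma to get $\beta\in\langle[(1,1)]+[(-1,1)]\rangle\subseteq\Ann_{R^\pm}(f)$ with $\theta_d(\beta)=\theta_d(\alpha)$ and controlled support, and conclude via \cref{thm:mahler-congruence_signed}; your justification of the pole symmetry under $\xi\mapsto-\xi$ is just slightly more explicit than the paper's.
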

%\begin{rem}
%We remark that  if $f$ is odd; namely $I$ is the ideal generated by $[(1, 1)]+[(-1, 1)]$, then \cref{thm:mahler-congruence_signed} is the content of \cite[Corollary 4.2.2]{hyde2019polynomial}
%\end{rem}
\begin{proof}
 Since $I:=\langle [(1,1)]+[(-1,1)] \rangle \subset \Ann_{R^\pm}(f)$, one has $([(1,1)]+[(-1,1)])f=0$, i.e., $f(x)+f(-x)=0$. From this we see that if $f$ is defined at $\pm\zeta_d^m$  then $f$ is also defined at $\mp\zeta_d^{m}$. Hence $f$ is defined at $\zeta_d^{(\epsilon,a)}$ for each $(\epsilon,a)\in \supp(\alpha)\cup(-\supp(\alpha))$.

    By the previous lemma, there exists $\beta\in I$ such that $\theta_d(\beta)=\theta_d(\alpha)$ and
    \(
    \supp(\beta)\subseteq \supp(\alpha)\cup(-\supp(\alpha)).
    \)
    Clearly, \( \supp(\alpha)\cup\supp(\beta) \subseteq \supp(\alpha)\cup(-\supp(\alpha)).\) Hence $f$ is defined at $\zeta_d^{(\epsilon,a)}$ for each $(\epsilon,a)\in \supp(\alpha)\cup \supp(\beta)$. 
    By Theorem~\ref{thm:mahler-congruence_signed}, $\Phi_d$ is a factor of $\alpha f$. 
\end{proof}
We have a similar result for $[(1,1)-[(-1,1)]]$.

    \begin{thm} \label{thm:mahler-congruence_signed_02}
Suppose $\alpha \in R^\pm$ such that $\theta_d(\alpha) \in \theta_d(\langle [(1,1)]-[(-1,1)] \rangle).$ Let $f \in \Q(x)$ such that $[(1,1)]-[(-1,1)]\in \Ann_{R^\pm}(f)$. If $f$ is defined at $\zeta_d^{(\epsilon,a)}$ for each $(\epsilon,a) \in \supp(\alpha)$, then $\Phi_d$ is a factor of $\alpha f$. 
\end{thm}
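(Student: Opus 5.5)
We follow the proof of \cref{thm:mahler-congruence_signed_01}, replacing the generator $[(1,1)]+[(-1,1)]$ by $\gamma:=[(1,1)]-[(-1,1)]$ throughout. We first need the analogue of the two lemmas preceding \cref{thm:mahler-congruence_signed_01}. Since $\theta_d(\gamma)=[1]-[d_0+1]$, and for odd $r$ we have $r(d_0+1)\equiv r+d_0 \pmod d$, a general element of $\theta_d(\langle\gamma\rangle)$ has the form $\sum_r m_r([r]-[r+d_0])$. Note that $\theta_d(R^\pm)$ is supported on odd classes, and $\theta_d$ is surjective onto the subring spanned by those classes. We will therefore show that $\theta_d(\alpha)=\sum_r c_r[r]$, with $c_r$ defined as before, lies in $\theta_d(\langle\gamma\rangle)$ if and only if $c_r=-c_{r+d_0}$ for every residue class $r$.

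There are no self-paired classes, since $d_0\not\equiv 0 \pmod d$; so no parity condition arises. The classes split into pairs $\{r,r+d_0\}$. For each pair with $c_r\neq 0$, we pick a representative $(\epsilon_r,m_r)\in\supp(\alpha)$ with $\theta_d(\epsilon_r,m_r)$ lying in that pair. Without loss of generality $\theta_d(\epsilon_r,m_r)=r$, after relabeling $r$ (allowed since the condition $c_r=-c_{r+d_0}$ is symmetric up to sign). We then set
\[\beta_r=c_r\left([(\epsilon_r,m_r)]-[(-\epsilon_r,m_r)]\right)=c_r[(\epsilon_r,m_r)]\,\gamma\in\langle\gamma\rangle .\]
Because $\theta_d((-\epsilon,m))=\theta_d((\epsilon,m))+d_0$, we get $\theta_d(\beta_r)=c_r([r]-[r+d_0])$. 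Putting $\beta=\sum\beta_r$ gives $\beta\in\langle\gamma\rangle$ with $\theta_d(\beta)=\theta_d(\alpha)$ and $\supp(\beta)\subseteq\supp(\alpha)\cup(-\supp(\alpha))$, where $-\supp(\alpha)$ is the sign-flip as in the earlier lemma.

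Next we handle definedness. The hypothesis $\gamma f=0$ says exactly that $f(x)=f(-x)$. Hence if $f$ is defined at $\epsilon\zeta_d^m$, it is also defined at $-\epsilon\zeta_d^m$. Indeed, write $f=P/Q$ in lowest terms; then $f(-x)=P(-x)/Q(-x)$ is also in lowest terms, and equality of reduced fractions forces $Q(-x)=cQ(x)$ for a nonzero constant $c$. Therefore $f$ is defined at $\zeta_d^{(\epsilon,a)}$ for all $(\epsilon,a)\in\supp(\alpha)\cup(-\supp(\alpha))\supseteq\supp(\alpha)\cup\supp(\beta)$. Since $\beta\in\langle\gamma\rangle\subseteq\Ann_{R^\pm}(f)$, \cref{thm:mahler-congruence_signed} applies and shows that $\Phi_d$ is a factor of $\alpha f$.

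The main point needing care is the lemma, specifically its ``only if'' direction and the sign bookkeeping. Only the existence of $\beta$ is actually needed, and it can be obtained directly. Given $\theta_d(\alpha)=\theta_d(\delta\gamma)$ for some $\delta$, we have $\theta_d(\delta\gamma)=\sum_r m_r([r]-[r+d_0])$, which yields the relation $c_r=-c_{r+d_0}$ immediately. The construction in the previous paragraph then applies; one only has to check that replacing a representative from class $r+d_0$ by one from class $r$ changes $\beta_r$ by the sign consistent with $c_{r+d_0}=-c_r$.
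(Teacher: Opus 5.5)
Your proof is correct and follows the argument the paper intends. The paper gives no separate proof here and only calls the result the analogue of \cref{thm:mahler-congruence_signed_01}, whose proof consists of the same steps you carry out with the sign changed: the criterion (here $c_r=-c_{r+d_0}$), the construction of $\beta$ with $\supp(\beta)\subseteq\supp(\alpha)\cup(-\supp(\alpha))$, symmetry of the pole set under $x\mapsto -x$, and an appeal to \cref{thm:mahler-congruence_signed}. Your justification via $Q(-x)=cQ(x)$ is slightly more explicit than the paper's corresponding step.
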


\section{Mahler algebra, Galois symmetries, and cyclotomic factors of rational necklace functions} \label{sec:galois}
In this section, we explain an unexpected phenomenon of cyclotomic factors of $M_n(f)$. More precisely, in \cref{sec:mahler} and \cref{sec:signed-mahler}, our primary focus is on the symmetries of $f$ and the arithmetic of $n$. It turns out that there also exists a hidden interaction between these and the Galois group of $\Q(\zeta_d)/\Q$—which, in turn, is another source of cyclotomic factors for $M_n(f)$.

We motivate our discussion with two concrete examples of this new phenomenon where \cite[Theorem 2.19]{chidambaram2023fekete} and \cref{thm:mahler-congruence} and \cref{thm:mahler-congruence_signed} could not explain the appearance of some new cyclotomic factors. The first example was discovered while we were working on \cite{chidambaram2023fekete}. Recall that $\tilde{F}_n(x) = M_n(\frac{x}{1-x}).$  

\begin{prop} \label{prop:example-new-factors}
Let $p$ be a prime number.
\begin{enumerate}
\item  If $p\equiv 1\pmod 4$  then $\Phi_{24}$ is a factor of  $\widetilde{F}_{3\times 7\times p}$.
\item If $p\equiv 1\pmod {5}$  then $\Phi_{20}$ is a factor of $\widetilde{F}_{3\times 5\times p}$.
\end{enumerate}
\end{prop}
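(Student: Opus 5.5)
The plan is to factor off the prime $p$ and reduce both statements to a Galois-invariance property of a fixed algebraic number. Work with the normalized function $f=\frac{x}{1-x}+\frac12=\frac{1+x}{2(1-x)}$. By \cref{ex:modifiedFekete}, $\psi_n f=\widetilde F_n$ for $n>1$, and $([1]+[-1])f=0$.

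In (1) we have $p\neq 3,7$ because $p\equiv 1\pmod 4$. In (2) we have $p\neq 3,5$ because $p\equiv 1\pmod 5$. So in both cases $n$ is squarefree and $\psi_{Np}=\psi_N(1-[p])$ with $N=21$, respectively $N=15$. Put $g=\psi_N f\in\Q(x)$. Then
\[
\widetilde F_{Np}(x)=g(x)-g(x^p).
\]
No exponent in $\supp(\psi_{Np})$ is divisible by $d=24$, respectively $d=20$, and $f$ has its only pole at $x=1$. Hence, by the lemma on definedness, both $g$ and $\widetilde F_{Np}$ are defined at $\zeta_d$, and
\[
\widetilde F_{Np}(\zeta_d)=g(\zeta_d)-g(\zeta_d^p).
\]
Since $g$ has rational coefficients, $g(\zeta_d^p)=\sigma_p(g(\zeta_d))$, where $\sigma_p\in\Gal(\Q(\zeta_d)/\Q)$ is the automorphism $\zeta_d\mapsto\zeta_d^p$. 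So it suffices to show that $g(\zeta_d)$ is fixed by $\sigma_p$ for every admissible residue of $p$ modulo $d$.

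Next, identify the relevant subgroups of $(\Z/d)^\times$.
\begin{enumerate}
\item For $d=24$: $p$ is coprime to $24$ and $p\equiv1\pmod4$, so $p\bmod 24\in H=\{1,5,13,17\}$. This subgroup is generated by $5$ and $13$, so it suffices to check invariance under $\sigma_5$ and $\sigma_{13}$.
\item For $d=20$: $p$ is odd and $p\equiv 1\pmod 5$, so $p\bmod 20\in\{1,11\}$. Only $\sigma_{11}$ needs checking.
\end{enumerate}

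The symmetry of $f$ simplifies $g(\zeta_d)$. From $f(\xi^{-1})=-f(\xi)$ we get, for $d=24$, that $\zeta^{21}=\zeta^{-3}$, and therefore
\[
g(\zeta_{24})=f(\zeta)-f(\zeta^7)-2f(\zeta^3).
\]
Similarly, for $d=20$ the reduction $15\equiv -5$ gives
\[
g(\zeta_{20})=f(\zeta)-f(\zeta^3)-2f(\zeta^5).
\]
Applying $\sigma_k$ to these expressions just multiplies every exponent by $k$, and we reduce the results modulo $d$ and up to sign using the same symmetry. For example,
\[
\sigma_5\, g(\zeta_{24})=f(\zeta^5)-f(\zeta^{11})+2f(\zeta^{9}).
\]
To compare the two sides, use $f(e^{i\theta})=\frac{i}{2}\cot(\theta/2)$. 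Each required identity then becomes an explicit identity among cotangents of multiples of $\pi/24$, respectively $\pi/20$. For (1), one checks that
\[
\cot\tfrac{\pi}{24}-\cot\tfrac{7\pi}{24}-2\cot\tfrac{3\pi}{24}
=\cot\tfrac{5\pi}{24}-\cot\tfrac{11\pi}{24}+2\cot\tfrac{9\pi}{24},
\]
together with the analogous identity for $\sigma_{13}$. For (2) there is a single identity, for $\sigma_{11}$. All of these can be verified with the known closed forms, for instance $\cot\frac{\pi}{24}=2+\sqrt2+\sqrt3+\sqrt6$, $\cot\frac{\pi}{8}=1+\sqrt2$, and the corresponding radicals in $\Q(\sqrt5,\sqrt{5\pm2\sqrt5})$ for $d=20$.

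An equivalent and more conceptual check is to show that $g(\zeta_d)$ lies in the fixed field of $H$. For $d=24$ this fixed field is the quadratic field $\Q(\zeta_{24})^H=\Q(\sqrt{-1})$.

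The main obstacle is this last verification, namely that $g(\zeta_d)$ really is $H$-invariant. I would first carry it out by brute-force expansion in the basis of $\Q(\zeta_d)$ over $\Q$, clearing denominators of the form $1-\zeta^k$. Failing that, I would look for a Mahler-algebra witness: an element $\gamma\in\psi_N\cdot\Ann_R(f)$ with $\phi_d\bigl(\psi_N([1]-[p])\bigr)$ close to $\phi_d(\gamma)$, where the discrepancy is absorbed by the Galois action. This is the ``lowering the field of definition'' mechanism that the section goes on to develop.
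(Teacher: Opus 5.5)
\textbf{Verdict: the reduction is correct, but the one substantive step is asserted rather than proved.}

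Your reduction matches the paper's closing step. Since $Np$ is squarefree, $\widetilde F_{Np}(\zeta_d)=g(\zeta_d)-\sigma_p\bigl(g(\zeta_d)\bigr)$ with $g=\psi_N f$. Your residue sets, your simplifications of $g(\zeta_d)$ via $f(\xi^{-1})=-f(\xi)$, and your formula for $\sigma_5 g(\zeta_{24})$ are all right.

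The gap is the claim that $g(\zeta_d)$ is fixed by $\sigma_5,\sigma_{13}$ (resp.\ $\sigma_{11}$). You assert it with ``one checks'' and then flag it yourself as the main obstacle, with a contingency plan. As written, nothing beyond the one-line reduction is proved.

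The step does go through, and your displayed identity is true. Use $\cot\frac{3\pi}{24}=\sqrt2+1$, $\cot\frac{5\pi}{24}=\sqrt6-\sqrt3-\sqrt2+2$, $\cot\frac{7\pi}{24}=\sqrt6+\sqrt3-\sqrt2-2$, $\cot\frac{9\pi}{24}=\sqrt2-1$, $\cot\frac{11\pi}{24}=\sqrt6-\sqrt3+\sqrt2-2$, together with your value of $\cot\frac{\pi}{24}$. Both sides then equal $2$, so $g(\zeta_{24})=i$. No separate identity is needed for $\sigma_{13}$: $\sigma_{13}g(\zeta_{24})$ reduces to the same expression $f(\zeta^5)-f(\zeta^{11})+2f(\zeta^9)$.

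For (2), both sides of the $\sigma_{11}$-identity equal $\sqrt{5+2\sqrt5}-\sqrt{5-2\sqrt5}=4\sin\frac{\pi}{5}$. Hence $g(\zeta_{20})=2i\sin\frac{\pi}{5}=\zeta_{20}^2-\zeta_{20}^{-2}$.

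The paper gets these values directly by algebra in $\Q(\zeta_d)$, combining terms over a common denominator and using $\Phi_{24}(\zeta_{24})=0$, resp.\ $\zeta_{20}^{10}=-1$. It finds $\widetilde F_{21}(\zeta_{24})=\zeta_{24}^6$ and $\widetilde F_{15}(\zeta_{20})=\zeta_{20}^2-\zeta_{20}^{-2}$. These are visibly fixed by $\sigma_p$ for $p\equiv 1\pmod 4$, resp.\ $p\equiv 1\pmod{10}$. Computing the value outright is cleaner than testing invariance generator by generator.

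\textbf{Your fallback cannot replace this computation.} By the appendix, $\Ann_R(f)=\langle[1]+[-1]\rangle$. So both \cref{thm:mahler-congruence} applied to $\psi_{21p}$ and the Galois mechanism of \cref{thm:Galois} come down to congruences $\phi_{24}\bigl(([h]-[1])\psi_{21}\bigr)\in\phi_{24}(\langle[1]+[-1]\rangle)$.
\begin{itemize}
\item For $d=24$, this holds for $h=17$ but fails for $h=5$ and $h=13$: the coefficient of $[\overline{1}]$ is $-1$ while that of $[\overline{-1}]$ is $0$. So that route only places $g(\zeta_{24})$ in $\Q(\zeta_8)$. This gives (1) for $p\equiv 1\pmod 8$, as in \cref{prop:Galois-toy-example}(a).
\item For $d=20$, the analogous condition fails for $([11]-[1])\psi_{15}$. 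So for (2) the route gives only the trivial case $p\equiv 1\pmod{20}$, where already $\phi_{20}(\psi_p)=0$.
\end{itemize}
The missing invariance is under $\zeta_d\mapsto-\zeta_d$, which is $\sigma_{13}$ for $d=24$ and $\sigma_{11}$ for $d=20$. This is exactly the extra hypothesis $M_N\bigl(f(x)-f(-x)\bigr)(\zeta_d)=0$ in \cref{prop:Galois-toy-example}(b) and \cref{thm:Galois}(b). It has to be verified by an explicit evaluation like the one above.
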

\begin{proof} (1) Let $\zeta=\zeta_{24}$. We have
\[
\begin{aligned}
\tilde{F}_{21}(\zeta)&=\frac{\zeta}{1-\zeta} -\frac{\zeta^3}{1-\zeta^3}-\frac{\zeta^7}{1-\zeta^7}+\frac{\zeta^{21}}{1-\zeta^{21}}
=\frac{\zeta(1+\zeta+\zeta^2)-\zeta^3}{1-\zeta^3} -\frac{\zeta^7(1+\zeta^7+\zeta^{14})-\zeta^{21}}{1-\zeta^{21}}\\
&=\frac{\zeta+\zeta^2}{1-\zeta^3} - \frac{\zeta^3(\zeta^7+\zeta^{14})}{\zeta^3(1-\zeta^{21})}
=\frac{\zeta+\zeta^2+\zeta^{10}+\zeta^{17}}{1-\zeta^3} 
=\frac{\zeta^{-2}(\zeta^3+\zeta^{12})+\zeta^2(1+\zeta^{15})}{1-\zeta^3}\\
&=\frac{\zeta^{-2}(\zeta^3-1)+\zeta^2(1-\zeta^{3})}{1-\zeta^3}=\zeta^2-\zeta^{-2}=\zeta^6.
\end{aligned}
\]
(Since $\Phi_{24}(x)=x^8-x^4+1$, one has $\zeta^2-\zeta^{-2}=\zeta^6$.)

Hence $\tilde{F}_{21p}(\zeta)=\tilde{F}_{21}(\zeta)-\tilde{F}_{21}(\zeta^p)=\zeta^6-\zeta^{6p}=\zeta^6-\zeta^6=0$.

(2) Let $\zeta=\zeta_{20}$. Note that $\zeta^{10}=-1$. We have
\[
\begin{aligned}
\tilde{F}_{15}(\zeta)&=\frac{\zeta}{1-\zeta} -\frac{\zeta^3}{1-\zeta^3}-\frac{\zeta^5}{1-\zeta^5}+\frac{\zeta^{15}}{1-\zeta^{15}}\\
&=\frac{\zeta(1+\zeta+\zeta^2+\zeta^3+\zeta^4)-\zeta^5}{1-\zeta^5} -\frac{\zeta^3(1+\zeta^3+\zeta^{6}+\zeta^9+\zeta^{12})-\zeta^{15}}{1-\zeta^{15}}\\
&=\frac{\zeta+\zeta^2+\zeta^3+\zeta^4}{1-\zeta^3} - \frac{\zeta^5(\zeta^3+\zeta^6+\zeta^9+\zeta^{12})}{\zeta^5(1-\zeta^{15})}\\
&=\frac{\zeta+\zeta^2+\zeta^3+\zeta^4+\zeta^8+\zeta^{11}+\zeta^{14}+\zeta^{17}}{1-\zeta^5}\\
&=(\frac{\zeta+\zeta^{11})+(\zeta^4+\zeta^{14})+( \zeta^3+ \zeta^8)+(\zeta^2+ \zeta^{17})}{1-\zeta^5}\\
&=\frac{\zeta^{-2}(\zeta^5+\zeta^{10})+\zeta^2(1+\zeta^{15})}{1-\zeta^{5}}=\zeta^2-\zeta^{-2} = \zeta_5^2 - \zeta_5^3.
\end{aligned}
 \]

Hence $\tilde{F}_{15p}(\zeta)=\tilde{F}_{15}(\zeta)-\tilde{F}_{15}(\zeta^p)=\zeta^2-\zeta^{2p}-(\zeta^{-2}-\zeta^{-2p})=0$.
\end{proof}

Our goal is to generalize \cref{prop:example-new-factors} to a broader class of rational functions. While the explicit calculations provided in \cref{prop:example-new-factors} can, in principle, be applied to any given $f \in \mathbb{Q}(x)$, it is unclear how to extend this approach to a \textit{generic} $f$. Fortunately, we can overcome this issue using a new tool: Galois theory. We discuss a second example that aims to streamline the argument described in \cref{prop:example-new-factors}. We begin with the following.% which is very important in our arguments.
\begin{lem}\label{lem-defined}
Let $f(x) \in \mathbb{Q}(x)$, $d>1$ a positive integer and $a\in \Z$. 
Suppose that $f$ is defined at $\zeta_d^a$. 
Then $f$ is also defined at $\zeta_d^{at}$ and at $\zeta_d^{a/q}$, 
for any integer $t$ with $\gcd(t,d)=1$ and any integer $q \mid a$ with $\gcd(q,d)=1$.
\end{lem}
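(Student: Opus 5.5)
The plan is to reduce both claims to one fact: since $f$ has rational coefficients, the set of roots of unity of a given order at which $f$ has a pole is stable under $\Gal(\Q(\zeta_d)/\Q)$. First write $f=P/Q$ with $P,Q\in\Q[x]$ and $\gcd(P,Q)=1$. Then $f$ is defined at $\xi$ if and only if $Q(\xi)\neq 0$. One direction is the definition. For the other, if some representation $P_1/Q_1$ has $Q_1(\xi)\neq0$, then $PQ_1=P_1Q$ and coprimality give $Q\mid Q_1$, so $Q(\xi)\neq 0$. So we must show $Q(\zeta_d^{at})\neq 0$ and $Q(\zeta_d^{a/q})\neq0$ whenever $Q(\zeta_d^a)\neq 0$.

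For the first claim, take $t$ with $\gcd(t,d)=1$. There is an automorphism $\sigma_t\in\Gal(\Q(\zeta_d)/\Q)$ with $\sigma_t(\zeta_d)=\zeta_d^t$. Since $Q$ has rational coefficients, $\sigma_t(Q(\zeta_d^a))=Q(\zeta_d^{at})$. Because $\sigma_t$ is a field automorphism, it maps nonzero elements to nonzero elements, so $Q(\zeta_d^{at})\neq0$. An equivalent argument: $\zeta_d^a$ and $\zeta_d^{at}$ have the same order $e=d/\gcd(a,d)$, hence the same minimal polynomial $\Phi_e$. Then $Q(\zeta_d^{at})=0$ iff $\Phi_e\mid Q$ iff $Q(\zeta_d^a)=0$.

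For the second claim, let $q\mid a$ with $\gcd(q,d)=1$, and write $b=a/q\in\Z$. Since $q$ is invertible mod $d$, choose $t$ with $tq\equiv 1\pmod d$; then $\gcd(t,d)=1$. We get $\zeta_d^{at}=\zeta_d^{bqt}=\zeta_d^{b}$, so $\zeta_d^{a/q}=\zeta_d^{at}$, and the first claim applies. Equivalently, $\gcd(b,d)=\gcd(bq,d)$ because $\gcd(q,d)=1$, so $\zeta_d^b$ and $\zeta_d^a$ have the same order and the minimal-polynomial argument applies directly.

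No step is a real obstacle. The only point needing care is the reduction at the start: definedness refers to some representation of $f$, so we should pass to the reduced form $P/Q$ before applying the Galois action. After that the lemma follows from $\Q$-rationality of $Q$ and the transitivity of the Galois action on primitive roots of unity of a given order.
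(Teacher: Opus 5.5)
Your proposal is correct and follows essentially the same route as the paper. Both write $f=P/Q$ in reduced form, apply the automorphism $\sigma_t$ to $Q(\zeta_d^a)\neq 0$, and handle the $a/q$ case by choosing an inverse $t$ of $q$ modulo $d$ so that $\zeta_d^{a/q}=\zeta_d^{at}$. Your check that definedness can be read off from the reduced denominator ($Q\mid Q_1$ by coprimality) fills in a step the paper takes for granted, and the minimal-polynomial variant via $\Phi_e$ is an equivalent reformulation.
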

\begin{proof}
    Write $f(x) = \frac{g(x)}{h(x)}$ with $g(x), h(x) \in \mathbb{Q}[x]$ and $\gcd(g,h)=1$. 
By assumption, $h(\zeta_d^a) \neq 0$. 

Let $t$ be an integer with $\gcd(t,d)=1.$ Then the map $\zeta_d \mapsto \zeta_d^t$ 
extends to a field automorphism $\sigma_t$ of $\mathbb{Q}(\zeta_d).$
Hence,  we have
\[
h(\zeta_d^{at}) = \sigma_t\big(h(\zeta_d^a)\big).
\]
 Since $h(\zeta_d^a) \neq 0$, 
it follows that $h(\zeta_d^{at}) \neq 0$, and thus $f$ is defined at $\zeta_d^{at}$.

Now let $q \mid a$ with $\gcd(q,d)=1$. Choose an integer $l$ such that 
$lq \equiv 1 \pmod{d}$. Then $a/q\equiv al\pmod d$ and hence $\zeta_d^{a/q} = \zeta_d^{al}$. Since $\gcd(l,d)=1$, 
the previous argument shows that $h(\zeta_d^{a/q}) \neq 0$. Therefore, 
$f$ is defined at $\zeta_d^{a/q}$ as desired.
\end{proof}

\begin{prop} \label{prop:Galois-toy-example}
     Let $f(x)\in \Q(x)$ be  such that $f(x)+f(1/x)=C$ is constant and $f(x)$ is defined at $\zeta_{24}^a$, for $a\in \{1,3,7,21\}$. Then, the following properties hold.
     \begin{enumerate}
  \item[(a)] If $p\equiv 1\pmod 8$ is a prime then $\Phi_{24}\mid M_{3\times 7\times p}(f)$.
  \item[(b)] Let $g(x)=f(x)-f(-x)$. Assume further that $f(x)$ is defined at $-\zeta_{24}^a$, for $a\in \{1,3,7,21\}$ and that $M_{3\times 7}(g)(\zeta_{24})=0$.  If $p\equiv 1\pmod 4$  then $\Phi_{24}\mid M_{3\times 7\times p}(f)$.
  \end{enumerate}
\end{prop}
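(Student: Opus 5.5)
The plan is to use the Galois group of $\Q(\zeta_{24})/\Q$ rather than a congruence in the Mahler algebra. Write $\zeta=\zeta_{24}$, $n=3\times 7\times p$ with $p\nmid 42$, and $\sigma_t$ for the automorphism $\zeta\mapsto\zeta^t$ when $\gcd(t,24)=1$.

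\textbf{Step 1: reduce to a Galois-invariance statement.} Since $n$ is squarefree, $\psi_{n}=\psi_{21}(1-[p])$. Hence
\[
M_{n}(f)=\psi_{21}f-\psi_{21}\bigl(f(x^p)\bigr).
\]
By hypothesis $f$ is defined at $\zeta^a$ for $a\in\{1,3,7,21\}$. By \cref{lem-defined} with $t=p$, it is then also defined at $\zeta^{ap}$. So $M_n(f)$ is defined at $\zeta$, and
\[
M_n(f)(\zeta)=A-\sum_{a\mid 21}\mu(a)f(\zeta^{ap}), \qquad\text{where } A:=M_{21}(f)(\zeta)\in\Q(\zeta).
\]
Because $f$ has rational coefficients, $f(\zeta^{ap})=\sigma_p(f(\zeta^a))$, and therefore $M_n(f)(\zeta)=A-\sigma_p(A)$. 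Thus it suffices to show that $\sigma_p$ fixes $A$, i.e.\ that the residue of $p$ modulo $24$ lies in the stabilizer $H\le(\Z/24)^\times$ of $A$.

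\textbf{Step 2: part (a).} If $p\equiv 1\pmod 8$ and $p\neq 3$, then $p\equiv 1$ or $17\pmod{24}$, so we only need $17\in H$. Since $17\equiv-7$, applying $\sigma_{17}$ to $A=f(\zeta)-f(\zeta^3)-f(\zeta^7)+f(\zeta^{21})$ gives
\[
\sigma_{17}(A)=f(\zeta^{-7})-f(\zeta^{3})-f(\zeta^{-1})+f(\zeta^{21}),
\]
using $51\equiv 3$, $119\equiv -1$ and $357\equiv 21 \pmod{24}$. The functional equation $f(1/x)=C-f(x)$ holds at these points, since $f$ is defined at $\zeta^{1},\zeta^{7}$ and hence at their inverses. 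It gives $f(\zeta^{-7})-f(\zeta^{-1})=f(\zeta)-f(\zeta^7)$, so $\sigma_{17}(A)=A$.

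\textbf{Step 3: part (b).} Now $p\equiv 1,5,13,17\pmod{24}$. Since $5\equiv 13\cdot 17 \pmod{24}$, it is enough to add $13\in H$. Because $\zeta^{12}=-1$ and every $a\mid 21$ is odd, $\zeta^{13a}=-\zeta^a$. Hence
\[
\sigma_{13}(A)=\sum_{a\mid 21}\mu(a)f(-\zeta^a)=A-M_{21}(g)(\zeta)=A,
\]
using the extra definedness of $f$ at $-\zeta^a$ and the hypothesis $M_{21}(g)(\zeta)=0$. So $H\supseteq\{1,5,13,17\}$ and $\sigma_p(A)=A$.

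\textbf{Main obstacle.} The only delicate point is the bookkeeping of definedness: applying the functional equation and the Galois action legitimately at each $\zeta^{\pm a}$, $\zeta^{ap}$ and $-\zeta^a$. I would handle it uniformly via \cref{lem-defined} together with the symmetry $x\mapsto 1/x$. The identification of the invariance subgroup itself is a routine residue computation modulo $24$.
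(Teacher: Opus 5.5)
Your proposal is correct and is essentially the paper's argument: the paper likewise shows that $A=M_{21}(f)(\zeta_{24})$ is fixed by $\overline{17}$ (via $f(x)+f(1/x)=C$) and, in (b), also by $\overline{13}$ (via $M_{21}(g)(\zeta_{24})=0$), and then concludes $M_{21p}(f)(\zeta_{24})=A-\sigma_p(A)=0$. The only cosmetic difference is that you check directly that $p \bmod 24$ lies in the stabilizer of $A$, whereas the paper first identifies $A$ as an element of the fixed field $\Q(\zeta_8)$, resp.\ $\Q(\zeta_4)$, and then uses that $\sigma_p$ fixes that subfield.
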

\begin{proof} Let $\zeta=\zeta_{24}$. By our assumption, $M_{3\times7}(f)$ is defined at $\zeta$. We will show that $M_{3\times 7}(f)(\zeta)\in \Q(\zeta_8)$. 

We know that $G=\Gal(\Q(\zeta_{24})/\Q)=(\Z/24\Z)^\times=\{\overline{1},\overline{5},\overline{7},\overline{11},\overline{13},\overline{17},\overline{19},\overline{23}\}$. We denote the Galois action of $\overline{a}\in G$ on an element $u\in \Q(\zeta)$ by $\overline{a}\cdot u$. Recall that this action is given by $\overline{a}\cdot \zeta=\zeta^a$. Let $H=\{\overline{1},\overline{17}\}$ be a subgroup of $G$. 
We have
\[
\begin{aligned}
\overline{17}\cdot M_{21}(f)(\zeta)&=M_{21}(f)(\zeta^{17})=f(\zeta^{17})-f(\zeta^{3\cdot 17})-f(\zeta^{7\cdot 17})+f(\zeta^{21\cdot 17})\\
&=f(\zeta^{17})-f(\zeta^{3})-f(\zeta^{23})+f(\zeta^{21})\\
&=f(\zeta^{1})-f(\zeta^{3})-f(\zeta^{7})+f(\zeta^{21}).
\end{aligned}
\]
The last equality follows from $f(\zeta^{17})+f(\zeta^{7})=f(\zeta^{23})+f(\zeta^{1})$.  (Note also that by Lemma~\ref{lem-defined}$, f$ is defined at $\zeta^{17a}$, for $a=1,3,7,21$.) Thus $M_{21}(f)(\zeta)\in \Q(\zeta)^H= \Q(\zeta_{8})$.

(a) We can write $M_{21}(f)(\zeta)= P(\zeta_8)$, for some $P(x)\in \Q[x]$. Suppose that $p\equiv 1 \pmod 8$ is a prime. Then by Lemma~\ref{lem-defined}, $M_{21}(f)$ is defined at $\zeta^p$, and we have 
\[M_{21}f(\zeta^p)=\overline{p} \cdot M_{21}(f)(\zeta) = \overline{p}\cdot P(\zeta_8)=P(\zeta_8^p)=P(\zeta_8).\]
Hence $M_{21p}(f)(\zeta)=M_{21}(f)(\zeta)-M_{21}(f)(\zeta^p)=0$.

(b) Since $M_{3\times 7}(g)(\zeta_{24})=0$, we see that
    \[
    M_{21}(f)(\zeta)=M_{21}(f)(-\zeta)=M_{21}(f)(\zeta^{13})= \overline{13} \cdot M_{21}(f)(\zeta).
    \]
    Hence $M_{21}(f)(\zeta)$ is also fixed by $\overline{13}\in G=\Gal(\Q(\zeta_{24})/\Q)$. Thus $M_{21}(f)(\zeta)$ is fixed by the subgroup $H'=\langle \overline{13},\overline{17}\rangle =\{\overline{1},\overline{5},\overline{13},\overline{17}\}$. This implies that $M_{21}(f)(\zeta)\in Q(\zeta)^{H'}=\Q(\zeta_4)$.  We can write $M_{21}(f)(\zeta)= P(\zeta_4)$, for some $P(x)\in \Q[x]$. Suppose that $p\equiv 1 \pmod 4$ is a prime. Then by Lemma~\ref{lem-defined}, $M_{21}(f)$ is defined at $\zeta^p$, and we have 
\[M_{21}f(\zeta^p)=\overline{p} \cdot M_{21}(f)(\zeta) = \overline{p}\cdot P(\zeta_4)=P(\zeta_4^p)=P(\zeta_4).\]
Hence $M_{21p}(f)(\zeta)=M_{21}(f)(\zeta)-M_{21}(f)(\zeta^p)=0$.
\end{proof}

\begin{rem}
    We return to Example \ref{ex:3x5x7}. Let $f(x)=\dfrac{1-x}{1+x}$ and $\zeta=\zeta_{24}$.  We have
\[
g(x)=f(x)-f(-x)
=
\frac{1-x}{1+x}-\frac{1+x}{1-x}
=
\dfrac{-4x}{1-x^2},
\]
and
\[
M_{21}(g)(\zeta)
=
g(\zeta)-g(\zeta^3)-g(\zeta^7)+g(\zeta^{21}).
\]

We first compute
\[
g(x)-g(y)
=
\frac{-4x}{1-x^2}-\frac{-4y}{1-y^2}
=
\frac{4(y-x)(1+xy)}
{(1-x^2)(1-y^2)}.
\]
Hence
\[
g(\zeta)-g(\zeta^3)
=
\frac{4(\zeta^3-\zeta)(1+\zeta^4)}
{(1-\zeta^2)(1-\zeta^6)}= -\frac{4\zeta(1+\zeta^4)}
{(1-\zeta^6)}.
\]
Similarly,
\[
g(\zeta^7)-g(\zeta^{21})
=
\frac{4(\zeta^{21}-\zeta^7)(1+\zeta^{28})}
{(1-\zeta^{14})(1-\zeta^{42})}=-\frac{4\zeta^7(1+\zeta^{28})}
{(1-\zeta^{42})}.
\]

Using \(\zeta^{24}=1\), 
we have
\[
g(\zeta^7)-g(\zeta^{21})
=- \frac{4\zeta^7(1+\zeta^{28})}
{(1-\zeta^{42})} = -4\frac{\zeta^6}{1-\zeta^{18}} \zeta(1+\zeta^4)
=-4\frac{1}{1-\zeta^{6}} \zeta(1+\zeta^4)=
g(\zeta)-g(\zeta^3).
\]
Here we use $\zeta^{12}=-1$ to obtain $\frac{\zeta^6}{1-\zeta^{18}} =\frac{\zeta^{12}}{\zeta^6-\zeta^{24}}=\frac{-1}{\zeta^6-1}=\frac{1}{1-\zeta^6}$.
Consequently,
\[
M_{21}(g)(\zeta)
=
\bigl(g(\zeta)-g(\zeta^3)\bigr)
-
\bigl(g(\zeta^7)-g(\zeta^{21})\bigr)
=
0.
\]

Proposition~\ref{prop:Galois-toy-example} implies that $\Phi_{24}$ is a factor of $M_{105}(f)$.
\end{rem}

We now formalize the argument for \cref{prop:Galois-toy-example} in a more general setting. 
\begin{thm} 
\label{thm:Galois}
Let \(d>1\)  and let \(d_1\) be a divisor of \(d\). 
Let
\[
H=\ker( (\Z/d)^{\times} \to (\Z/d_1)^{\times} ).
\]
Let $f(x)\in \Q(x)$ be such that $[1]+[-1] \in \Ann_{\Z[\Z^{\circ}]}(f)$. 
  Let \(\alpha=\alpha_1\alpha_2\in \mathbb Z[\mathbb Z^\circ]\). Suppose that:

\begin{enumerate}
    \item $f$ is defined at each $\zeta_{d}^{a}$ for each $a \in \supp(\alpha_1)$;
    \item For all $h \in \Z$ with $\overline{h}\in H$, we have $\phi_{d}(([h]-[1])\alpha_{1}) \in \phi_d(\langle[1]+[-1]\rangle)$;
    \item For each $a \in \supp(\alpha_2)$, $\gcd(a,d)=1$;
 \end{enumerate}
The following statements hold.
\begin{enumerate} 
    \item[(a)]  If $\alpha_2 \equiv 0 \mod{[d_1]}$ then $\Phi_d$ is a factor of $\alpha f.$
    \item[(b)]  Let \( g(x)=f(x)-f(-x).\) Suppose further that $d_1=4d_0$ is a multiple of 4 and $d/d_1$ is odd and $\alpha_1g(\zeta_d)=0$. If  $\alpha_2 \equiv 0 \mod{[d_1/2]}$ then $\Phi_d$ is a factor of $\alpha f.$
\end{enumerate}
 
\end{thm}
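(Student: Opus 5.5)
Let $\zeta=\zeta_d$ and $u:=\alpha_1 f(\zeta)$; by hypothesis (1) and the earlier lemma on definedness of $\alpha f$, $u$ is defined and lies in $\Q(\zeta)$. The plan follows the proof of \cref{prop:Galois-toy-example}. First, show that $u$ is fixed by every $\sigma_h\in H\subset(\Z/d)^\times=\Gal(\Q(\zeta)/\Q)$, so that $u\in\Q(\zeta)^H=\Q(\zeta_{d_1})$. Second, expand $\alpha f(\zeta)=\sum_{a\in\supp(\alpha_2)} n_a\,(\alpha_1 f)(\zeta^a)$, using that $\gcd(a,d)=1$ for such $a$ (hypothesis (3)) and \cref{lem-defined} to see that $\alpha_1 f$ is defined at $\zeta^a$, with $(\alpha_1 f)(\zeta^a)=\sigma_a(u)$. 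Since $u\in\Q(\zeta_{d_1})$, $\sigma_a(u)$ depends only on $a\bmod d_1$. When $\alpha_2\equiv0\bmod[d_1]$, the coefficients $n_a$ sum to zero in each residue class mod $d_1$ (\cref{prop:0modm}), so $\alpha f(\zeta)=0$. This gives (a).

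For the Galois invariance, fix $h$ with $\overline h\in H$, chosen coprime to $d$. Then $\sigma_h(u)-u=(([h]-[1])\alpha_1)f(\zeta)$. By (2) and \cref{lem:[-1]+[1]-second}, there is $\beta\in\langle[1]+[-1]\rangle\subset\Ann_R(f)$ with the same $\phi_d$-image and support in $\pm\supp(([h]-[1])\alpha_1)$. The symmetry $f(x)+f(1/x)=0$, together with \cref{lem-defined} applied with $t=h$, makes $f$ defined at $\zeta^{\pm a}$ and $\zeta^{\pm ha}$ for all $a\in\supp(\alpha_1)$. \cref{thm:mahler-congruence_revised} then gives $\sigma_h(u)-u=0$.

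For (b), I would additionally show that $u$ is fixed by $\sigma_{c}$, where $c\equiv1\bmod d/4$-type condition is replaced by $\zeta^{c}=-\zeta$, i.e.\ $c=1+d/2$. This $c$ is a unit because $d/2$ is even: $d_1=4d_0$ divides $d$. Indeed, $\alpha_1 g(\zeta)=0$ says $\alpha_1 f(\zeta)=\alpha_1 f(-\zeta)$. Definedness of $f$ at $-\zeta^a=\zeta^{ca}$ comes from \cref{lem-defined}, and since each $a$ in the support satisfies $(-\zeta)^a=\zeta^{ca}$, we get $\alpha_1f(-\zeta)=\sigma_c(u)$.

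The main obstacle is identifying the fixed field of $\langle H,c\rangle$. Since $d/d_1$ is odd, the $2$-part of $d$ equals that of $d_1$, so $c\equiv1+d_1/2\pmod{d_1}$ and $c\equiv 1$ modulo the odd part of $d_1$. Thus the image of $c$ in $(\Z/d_1)^\times$ is the nontrivial element of the kernel of reduction to $(\Z/(d_1/2))^\times$. Hence $\langle H,c\rangle$ is the kernel of $(\Z/d)^\times\to(\Z/(d_1/2))^\times$, and $u\in\Q(\zeta_{d_1/2})$. The argument of (a) then runs with $d_1/2$ in place of $d_1$, using $\alpha_2\equiv0\bmod[d_1/2]$.

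One caveat I would check carefully: whether the ordinary-definedness hypothesis (1) suffices for the $-\zeta^a$ evaluations in (b). If it does not, I would add that assumption, as in \cref{prop:Galois-toy-example}(b).
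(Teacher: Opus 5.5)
Your proposal is correct and follows the paper's proof essentially step for step. The shared steps are:
\begin{enumerate}
\item $H$-invariance of $u=\alpha_1 f(\zeta_d)$ from the $[1]+[-1]$ congruence. The paper cites \cref{thm:mahler-congruence}, which is exactly your combination of \cref{lem:[-1]+[1]-second} with \cref{thm:mahler-congruence_revised}.
\item Descent to $\Q(\zeta_{d_1})$, then cancellation from $\alpha_2\equiv 0 \bmod [d_1]$.
\item For (b), invariance under $\zeta_d\mapsto -\zeta_d=\zeta_d^{1+d/2}$. The paper identifies the fixed field by restricting to $\zeta_{4d_0}\mapsto-\zeta_{4d_0}$ on $\Q(\zeta_{4d_0})$, which is the same fact as your description of $\langle H,1+d/2\rangle$.
\end{enumerate}

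One point you share with the paper concerns the identification of $(\alpha_1 g)(\zeta_d)$ with $(\alpha_1 f)(\zeta_d)-(\alpha_1 f)(-\zeta_d)$. This uses $-\zeta_d^a=(-\zeta_d)^a=\zeta_d^{(1+d/2)a}$, which holds exactly for odd $a$. For odd $a$ it also disposes of your definedness caveat via \cref{lem-defined}, so (1) does suffice there. The step is therefore automatic when $\supp(\alpha_1)$ consists of odd integers, as for $\alpha_1=\psi_{21}$. If $\supp(\alpha_1)$ contains even integers, the conclusion can still be recovered as follows. Write $\alpha_1=\sum_a m_a[a]$ and read the hypothesis termwise, which also presupposes that $f$ is defined at $-\zeta_d^a$ for even $a$, since (1) does not give this. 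The hypothesis then becomes $u-\sigma_{1+d/2}(u)+E=0$, where $E=\sum_{a\ \mathrm{even}} m_a\bigl(f(\zeta_d^a)-f(-\zeta_d^a)\bigr)$ is fixed by $\sigma_{1+d/2}$. Applying $\sigma_{1+d/2}$ and adding the two equations gives $E=0$, hence $\sigma_{1+d/2}(u)=u$.
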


\begin{proof}
    % First, we may write $\alpha_2=\sum_{k \in S} c_k([a_k]-[b_k]),$ where $S$ is the set such that $a_k , b_k \in \Supp(\alpha_2),$ $a_k\equiv b_k\pmod{d_1}$ and $\gcd(a_k,d)=\gcd(b_k,d)=1$for all $k \in S$. 
Let $\zeta=\zeta_d$. By our assumption, $\alpha_1 f$ is defined at $\zeta$. We will show that $\alpha_1 f(\zeta)\in \Q(\zeta_{d_1})$. 

We know that $G=\Gal(\Q(\zeta)/\Q)=(\Z/d\Z)^\times = \{\overline{a} \mid 1\leq a < d , \gcd(a,d) =1\}.$ We denote the Galois action of $\overline{a}\in G$ on an element $u\in \Q(\zeta)$ by $\overline{a}\cdot u$. Recall that this action is given by $\overline{a}\cdot \zeta =\zeta^a$.  Here, we make an important remark about the relationship between the Galois action and the Mahler action. Namely; if $\gcd(a, m)=1$, and $p \in \Q(x)$  which is defined at $\zeta_m$, then these actions are compatible. More precisely, we have $\bar{a} p(\zeta_m) =([a] \cdot p )(\zeta_m)$ since they are both equal to $p(\zeta_m^a).$

By the assumption that $f$ is defined at $\zeta^a$ for each $a\in \supp(\alpha_1)$ and by \cref{lem-defined}, we see that $f$ is defined at each $\zeta^{b}$ for each $ b \in \Supp(([h]-[1])\alpha_1), \overline{h} \in H.$ For each $\overline{h}\in H$,  we have
\[
\overline{h}\cdot \alpha_1 (f)(\zeta)=\alpha_1 (f)(\zeta^{h})
=([h]\alpha_1) (f)(\zeta)
=\alpha_1 (f)(\zeta).
\]
 The last equality follows from $([h]-[1])\alpha_1 f(\zeta) = 0,$ by \cref{thm:mahler-congruence}. Thus $\alpha_1(f)(\zeta)\in \Q(\zeta)^{H}= \Q(\zeta_{d_1})$.
 Indeed, $H$ is precisely the subgroup fixing $\zeta_{d_1}=\zeta^{d/d_1}$.

 (a)  We can write $\alpha_1(f)(\zeta)= P(\zeta_{d_1})$, for some $P(x)\in \Q[x]$. 
  Write $\alpha_2=\sum\limits_{a\in S} n_a[a]$, where $S$ is the support of $\alpha_2$. By the assumption that $\gcd(a,d)=1$ for each $a\in S$ and  by \cref{lem-defined}, we see that $f$ is defined at each $\zeta^{b}$ for each $b\in \supp(\alpha_2\alpha_1)$. Then 
\[\begin{aligned}
    \alpha f(\zeta) &= \alpha_2 \alpha_1f(\zeta)= \sum_{a \in S} n_a[a]\alpha_1f(\zeta) = \sum_{a \in S} n_a\alpha_1 f(\zeta^{a})=\sum_{a \in S} n_a \overline{a}\cdot \alpha_1 f(\zeta)\\
    &=\sum_{a \in S} n_a \overline{a} \cdot P(\zeta_{d_1})
    = \sum_{a \in S} n_a [a]P(\zeta_{d_1}) =\alpha_2 P(\zeta_{d_1}).
 \end{aligned}\]
 Since $\alpha_2\equiv 0\mod {[d_1]}$, by Proposition~\ref{prop:0mod-rational} we have $\alpha_2 P(\zeta_{d_1})=0$. Hence $\alpha f(\zeta)=0$ and we are done.

 (b) Now we use the additional hypothesis involving \(     g(x)=f(x)-f(-x).
\)
Since \(d\) is divisible by \(4\), the integer \(d/2+1\) is coprime to \(d\), and
\(
    -\zeta=\zeta^{d/2+1}.
\) 
By assumption,
\(
    \alpha_1 g(\zeta)=0.
\)
Thus
\[
    0     =     \alpha_1 f(\zeta)-\alpha_1 f(-\zeta)     =     \alpha_1 f(\zeta)-\alpha_1 f(\zeta^{d/2+1}) = \alpha_1 f(\zeta) -\overline{(\frac{d}{2}+1)} \cdot \alpha_1 f(\zeta).
\]
Therefore \(\alpha_1 f(\zeta)\) is fixed by the Galois automorphism \(
    \zeta\longmapsto \zeta^{d/2+1}.\)
Consequently,
\[
    \alpha_1 f(\zeta)
    \in
    \mathbb Q(\zeta_{d_1})\cap
    \mathbb Q(\zeta_d)^{\langle d/2+1\rangle}.
\]
Since the automorphism \(\zeta_d\mapsto -\zeta_d\) restricts on
\(\mathbb Q(\zeta_{4d_0})\) to the automorphism
\[
    \zeta_{4d_0}\longmapsto -\zeta_{4d_0},
\]
its fixed field inside \(\mathbb Q(\zeta_{4d_0})\) is \(     \mathbb Q(\zeta_{2d_0}).\)
Hence
\[
    \alpha_1 f(\zeta)\in \mathbb Q(\zeta_{2d_0}).
\]
Thus there exists a polynomial \(P(x)\in \mathbb Q[x]\) such that
\(
    \alpha_1 f(\zeta)=P(\zeta_{2d_0}).
\)

Now write
\(
    \alpha_2=\sum\limits_{a\in \operatorname{Supp}(\alpha_2)} n_a[a].
\)
By the assumption that $\gcd(a,d)=1$ for each $a\in S$ and  by \cref{lem-defined}, we see that $f$ is defined at each $\zeta^{b}$ for each $b\in \supp(\alpha_2\alpha_1)$. Then 
\[\begin{aligned}
    \alpha f(\zeta) &= \alpha_2 \alpha_1f(\zeta)= \sum_{a \in S} n_a[a]\alpha_1f(\zeta) = \sum_{a \in S} n_a\alpha_1 f(\zeta^{a})=\sum_{a \in S} n_a \overline{a}\cdot \alpha_1 f(\zeta)\\
    &=\sum_{a \in S} n_a \overline{a} \cdot P(\zeta_{2d_0})
    = \sum_{a \in S} n_a [a]P(\zeta_{2d_0}) =\alpha_2 P(\zeta_{2d_0}).
 \end{aligned}\]
 Since $\alpha_2\equiv 0\mod {[2d_0]}$, by Proposition~\ref{prop:0mod-rational} we have $\alpha_2 P(\zeta_{2d_0})=0$. Hence $\alpha f(\zeta)=0$ and we are done.
\end{proof}

\begin{expl}
    We consider the case in \cref{prop:Galois-toy-example}: \[
d = 24, \quad d_1 = 8, \quad \alpha_1 = \psi_{21}, \quad \alpha_2 = \psi_p,
\]
where $p \equiv 1 \pmod{8}$. Let $f \in \mathbb{Q}(x)$ be a rational function satisfying $
f(x) + f(1/x) = 0,
$
and assume that $f$ is defined at each $\zeta_{24}^a$ for $a \in \Supp(\psi_{21})$. %Let \[I = \langle [1] + [-1] \rangle_{\mathbb{Z}[\mathbb{Z}^\circ]}.\]
In this case, one computes that
\[
H = \{\bar{1}, \overline{17}\}.
\]
Moreover,
\begin{align*}
\phi_{24}\bigl(([17]-[1])\psi_{21}\bigr)
&= \phi_{24}\bigl(([17]-[1])([1] - [3] - [7] + [21])\bigr) \\
&= [\overline{17}] + [\overline{7}] - \bigl([\overline{23}] + [\overline{1}]\bigr) =([\overline{7}]-[\overline{1}])([\overline{1}]+[\overline{-1}]) \in \phi_{24}(\langle [1] + [-1] \rangle).
\end{align*}
On the other hand, since $p \equiv 1 \pmod{8}$, we have
\[
\psi_p = [1] - [p] \equiv 0 \mod{[8]}.
\]
Then, we have $\Phi_{24}$ is a factor of $\alpha f = \psi_{3 \times 7 \times p} f,$ as proved in \cref{prop:Galois-toy-example}.
\end{expl}

\appendix
\section{Some ring-theoretic properties of the Mahler algebra $\Z[\Z^{\circ}]$}
In this section, we study some classes of rational functions in $\Q(x)$ with extra symmetries. Here, we make a rather simple observation that if $\alpha, \beta \in R:=\Z[\Z^{\circ}]$ such that $\alpha \beta=0$, then $\alpha \in \Ann_{R}(\beta f)$ for all $f \in \Q(x).$ In other words, the existence of zero divisors in $\Z[\Z^{\circ}]$ provides a good source of rational functions with extra symmetries. Examples of such $\alpha, \beta$ include $[1]+[-1]$ and $[1]-[-1]$ since 
\[ ([1]+[-1])([1]-[-1])=[1]-[1]=0.\]
It is natural to ask whether it is possible to classify all zero divisors in $\Z[\Z^{\circ}]$. The following theorem gives a complete answer to this question. 

\begin{thm}\label{thm:zero_divisors} 
Let $\alpha$ be a zero-divisor in $\Z[\Z^{\circ}]$. Then, $\alpha$ is either a multiple of $[1]+[-1]$ or $[1]-[-1].$
\end{thm}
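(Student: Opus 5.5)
The plan is to exploit the decomposition of the monoid $\Z^{\circ}$ as a direct product $\{\pm 1\}\times \N^{\circ}$, via $m\mapsto (\operatorname{sign}(m),|m|)$. This identifies the ring $\Z[\Z^{\circ}]$ with $A[t]/(t^2-1)$, where $A:=\Z[\N^{\circ}]=\Psi$ and $t$ corresponds to $[-1]$. Concretely, every $\alpha\in\Z[\Z^{\circ}]$ is uniquely of the form
\[
\alpha = a + b[-1], \qquad a,b\in A,
\]
where $a$ collects the terms with positive index and $b$ the terms with negative index, re-indexed by absolute value. The first step is to check that this is a ring isomorphism, using $[-1]^2=[1]$ and $[-1][m]=[-m]$.

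The second step is to show that $A$ is an integral domain. Since $\N^{\circ}$ is the free commutative monoid on the set of primes (unique factorization), the map $[p]\mapsto x_p$ identifies $A$ with the polynomial ring $\Z[x_p : p \text{ prime}]$ in countably many variables. Any two given elements involve only finitely many variables, so $A$ has no zero divisors.

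Next I would introduce the ring homomorphism
\[
\Theta\colon \Z[\Z^{\circ}]\to A\times A,\qquad a+b[-1]\mapsto (a+b,\;a-b),
\]
induced by $t\mapsto 1$ and $t\mapsto -1$. It is injective: if $a+b=0=a-b$, then $2a=0$, hence $a=0$ because $A$ is torsion-free, and then $b=0$.

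Now suppose $\alpha=a+b[-1]$ is a zero divisor, so $\alpha\beta=0$ for some nonzero $\beta=c+e[-1]$. Applying $\Theta$ gives
\[
(a+b)(c+e)=0 \quad\text{and}\quad (a-b)(c-e)=0 \quad\text{in } A.
\]
Since $\beta\neq 0$ and $\Theta$ is injective, at least one of $c+e$ and $c-e$ is nonzero. We then split into two cases, using that $A$ is a domain.
\begin{enumerate}
\item If $c+e\neq 0$, then $a+b=0$, so $\alpha=a-a[-1]=a\,([1]-[-1])$.
\item If $c-e\neq 0$, then $a=b$, so $\alpha=a\,([1]+[-1])$.
\end{enumerate}
In either case $\alpha$ is a multiple of $[1]-[-1]$ or of $[1]+[-1]$, as claimed.

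The only step with real content is the second one, namely that $\Z[\N^{\circ}]$ is a domain. I would verify it by writing out the identification with the polynomial ring, checking that $[mn]\mapsto$ the product of the corresponding monomials. The rest is bookkeeping with the sign component.
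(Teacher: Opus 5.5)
Your proof is correct and follows the paper's route. Like the paper, you split $\Z[\Z^{\circ}]\cong \Z[\N^{\circ}][\{\pm 1\}]$, use that $\Z[\N^{\circ}]$ is a domain, and deduce $a=\pm b$ from $\alpha\beta=0$. The differences are minor: you prove the domain property via $\Z[x_p : p \text{ prime}]$ instead of citing Gilmer, and your map $\Theta$, which amounts to adding and subtracting the equations $ac+be=0$, $ae+bc=0$, replaces the paper's determinant condition $A^2-B^2=0$. That substitution also makes explicit the step the paper leaves implicit, namely why a nonzero solution forces the determinant to vanish.
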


\begin{proof}
        First, we remark that $\Z^\circ$. $\Z^\circ = \Z ~\backslash~\{0\}$ can be identified as a direct product of two simpler monoids:
        \begin{itemize}
            \item The group $G = \{1, -1\}$;
            \item The monoid of positive integers $\N = \Z^+ = \{1, 2, 3, \dots\}$.
        \end{itemize}
        Every $m \in \Z^\circ$ has a unique representation as $m = s \times p$ with $s \in G$ and $p \in N$. Thus, we have a monoid isomorphism $\Z^\circ = G \times N$. Hence, we can rewrite \[
            R = \Z[\Z^\circ] \cong \Z[G \times N] \cong (\Z[N])[G].
        \]
        % From the book Robert Gilmer - Commutative Semigroup Ring, Theorem 8.1
        Now we define $S = \Z[N] = \Z[\Z^+]$, then our ring $R$ is isomorphic to the group ring $S[G] = S[\{-1, 1\}]$. Note that since the monoid $N = \Z^+$ is cancellative (that is if $ab = ac$ then $b = c$) and torsion-free (that is the only element $n$ with $n^k = 1$ is $n = 1$). A result by Gilmer \cite{gilmer1984commutative} stated that if a ring $U$ is an integral domain and a monoid $V$ is cancellative and torsion-free, then the monoid ring $U[V]$ is also an integral domain. Since our $\Z$ is an integral domain and $N$ is cancellative and torsion-free, $\Z[N]$ is an integral domain, thus $\Z[N]$ has no zero divisors.
        \\
        For the group ring $R$, let $g$ be the element of $S[G]$ corresponding to $[-1]$, thus $g^2 = [1]$. Any $\alpha \in R$ can be uniquely written as $\alpha = A + Bg$ with $A, B \in S$ and $A$ represents the part of $\alpha$ with positive indices while $B$ represents the part with negative indices. Specifically, for $\alpha = \sum_{m \in Z^\circ}a_m[m]$, then:
        \begin{itemize}
            \item $A = \sum_{p \in \Z^+}a_p[p]$;
            \item $B = \sum_{p \in \Z^+}a_{-p}[p]$.
        \end{itemize}
        An element $\alpha = A + Bg$ is a zero divisor if there exists a non-zero element $\beta = C + Dg$, with $C, D \in S$ such that $\alpha\beta = 0$. This is equivalent to \[
            (A + Bg)(C + Dg) = (AC + BD) + (AD + BC)g = 0.
        \]
        Since we are calculating in the ring $S[G]$, then it gives us the system of equations
        \[
        \begin{cases}
            AC + BD = 0\\
            AD + BC = 0.\\
        \end{cases}
        \]
        For this system to have non-zero solution $C, D$, the determinant of the coefficient matrix must be zero: \[
            \det\left(\begin{matrix}
                    A & B \\
                    B & A
                \end{matrix}\right) = A^2 - B^2 = 0.
        \]
        $S$ is an integral domain, then $A^2 - B^2 = 0$ implies $(A - B)(A + B) = 0$, which implies that either $A - B = 0$ or $A + B = 0$. We now consider cases:
        \begin{enumerate}
            \item $A = B$, then the element $\alpha = A(1 + g)$, with $A$ is non-zero. This is a zero divisor because it is annihilated by the non-zero element $1 - g$: $\alpha(1 - g) = A(1 + g)(1 - g) = A(1 - g^2) = A(1 - 1) = 0$. Since $A = B$, it means $\sum_{p \in \Z^+}a_p[p] = \sum_{p \in \Z^+}a_{-p}[p]$. Since the elements $[p]$ form a basis for $S$, the coefficients must be equal for each $p$, that is $a_p = a_{-p}$ for all $p \in \Z^+$. We call these ``even'' elements.
            \item $A = -B$, then the element $\alpha = A(1 - g)$, with $A$ is non-zero. This is a zero divisor because it is annihilated by the non-zero element $1 + g$: $\alpha(1 + g) = A(1 - g)(1 + g) = A(1 - g^2) = A(1 - 1) = 0$. Since $A = -B$, it means $\sum_{p \in \Z^+}a_p[p] = -\sum_{p \in \Z^+}a_{-p}[p]$. Since the elements $[p]$ form a basis for $S$, the coefficients must be equal for each $p$, that is $a_p = -a_{-p}$ for all $p \in \Z^+$. We call these ``odd'' elements
        \end{enumerate}
        Thus, the set of zero divisors in $\Z[\Z^\circ]$ is the union of two sets of non-zero elements: those whose coefficients have even symmetry ($a_p = a_{-p}$) and those whose coefficients have odd symmetry ($a_p = -a_{-p}$). Furthermore, this also means the set of all zero divisors is the union of the two principal ideals $([1] + [-1])$ and $([1] - [-1])$, excluding the zero element.
    \end{proof}

We discuss a related problem about zero divisors. As we mentioned earlier, if $\alpha \beta = 0$, then $\alpha \in \Ann_{R}(\beta f)$ for all $f \in \mathbb{Q}(x)$. We might wonder whether the converse is true: specifically, if $\alpha g = 0$, does it follow that $g = \beta f$ for some $f \in \mathbb{Q}(x)$? This question can be neatly framed within module theory using the idea of an exact sequence.

 \begin{definition}
A sequence of \( R \)-modules and homomorphisms
\[
\cdots \to M_{i-1} \xrightarrow{f_{i-1}} M_i \xrightarrow{f_i} M_{i+1} \to \cdots
\]
is called exact if, for every integer \( i \), the following condition holds:
\[
\operatorname{Im}(f_{i-1}) = \ker(f_i).
\]
\end{definition}
%We can not state our proposition. 
\begin{prop}
Let $\alpha = [1]-[-1]$ and $\beta = [1]+[-1].$ Then, the following sequence is exact 
    \[ M \xrightarrow[]{\alpha} M  \xrightarrow[]{\beta} M \xrightarrow[]{\alpha} M. \]

    Here $\alpha$ (respectively $\beta$) is the multiplication by $\alpha$ (respectively $\beta$). 
\end{prop}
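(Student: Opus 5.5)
We need to prove exactness at the two middle copies of $M$. Concretely, $\ker(\beta)=\operatorname{Im}(\alpha)$ at the second $M$, and $\ker(\alpha)=\operatorname{Im}(\beta)$ at the third $M$. Here $\alpha f(x)=f(x)-f(1/x)$ and $\beta f(x)=f(x)+f(1/x)$.

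The inclusions $\operatorname{Im}\subseteq\ker$ are immediate from the computation in the appendix that $\beta\alpha=\alpha\beta=[1]-[-1]\cdot[-1]\ldots=0$ in $R$. Explicitly, $([1]+[-1])([1]-[-1])=[1]-[1]+[-1]-[-1]=0$ in $\Z[\Z^\circ]$, so $\beta(\alpha f)=(\beta\alpha)f=0$, and symmetrically $\alpha(\beta f)=0$.

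For the reverse inclusions the key tool is that $2$ is invertible in $\Q(x)$ as a module over $\Q$. The involution $\sigma=[-1]$ acts on $M$ by $\sigma f(x)=f(1/x)$, and $\sigma^2=\mathrm{id}$. Let $g\in\ker\beta$, i.e.\ $g(1/x)=-g(x)$. Take $f=\tfrac12 g$; then
\[
\alpha f = \tfrac12\bigl(g(x)-g(1/x)\bigr)=\tfrac12(g+g)=g,
\]
so $g\in\operatorname{Im}\alpha$. Similarly, if $g\in\ker\alpha$, i.e.\ $g(1/x)=g(x)$, then $f=\tfrac12 g$ gives $\beta f=g$. Here one uses that the $R$-action is $\Z$-linear, hence commutes with multiplication by the rational scalar $\tfrac12$. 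Indeed $[m]\cdot(cf)(x)=cf(x^m)$ for $c\in\Q$.

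There is no real obstacle. The only point worth stating is that this argument is the decomposition of $M$ into $\pm1$-eigenspaces of the involution $\sigma$. We have $M=\ker\alpha\oplus\ker\beta$ via $f=\tfrac12\beta f+\tfrac12\alpha f$. The argument would fail over $\Z$-coefficients, but $M=\Q(x)$ is a $\Q$-vector space, so it goes through. I would present the two trivial inclusions first, then the two halving constructions.
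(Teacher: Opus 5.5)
Your proof is correct and follows essentially the same route as the paper: the inclusions $\operatorname{Im}\subseteq\ker$ come from $\beta\alpha=\alpha\beta=0$ (the paper checks this directly on $h(x)-h(1/x)$), and the reverse inclusions come from taking $f=\tfrac12 g$. The paper writes out only $\operatorname{Im}(\alpha)=\ker(\beta)$ and calls the other case similar, whereas you treat both. Before the explicit product computation, clean up the garbled expression ``$[1]-[-1]\cdot[-1]\ldots$''.
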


\begin{proof}
        The given sequence is exact if and only if $\operatorname{Im}(\alpha) = \ker(\beta)$ and $\operatorname{Im}(\beta) = \ker(\alpha)$. We only  prove that $\operatorname{Im}(\alpha) = \ker(\beta)$. The other set equality can be proved in a similar way.  First, we will identify $\operatorname{Im}(\alpha)$ and $\ker(\beta)$.
        \begin{enumerate}
            \item $\operatorname{Im}(\alpha)$: The image of $\alpha$ is the set of all $g(x) \in M$ such that $g(x) = \alpha \cdot h(x)$ for some $h(x) \in M$:
                  \begin{itemize}
                      \item $\alpha \cdot h(x) = ([1] - [-1])\cdot h(x) = h(x^1) - h(x^{-1}) = h(x) - h(1/x)$.
                      \item Then, $\operatorname{Im}(\alpha) = \{h(x) - h(1/x): h(x) \in \Q(x)\}$
                  \end{itemize}
            \item $\ker(\beta)$: The kernel of $\beta$ is the set of all $f(x)$ such that $\beta \cdot f(x) = 0$.
                  \begin{itemize}
                      \item $\beta \cdot f(x) = ([1] + [-1])\cdot f(x) = f(x^1) + f(x^{-1}) = f(x) + f(1/x)$.
                      \item Then, $\ker(\beta) = \{f(x) \in \Q(x): f(x) + f(1/x) = 0\}$
                  \end{itemize}
        \end{enumerate}
        We will show that $\operatorname{Im}(\alpha) = \ker(\beta)$:
        \begin{enumerate}
            \item $\operatorname{Im}(\alpha) \subseteq \ker(\beta)$: Consider $g(x) \in \operatorname{Im}(\alpha)$, then $g(x) = h(x) - h(1/x)$ for $h(x) \in \Q(x)$. We will check if $\beta \cdot g(x) = 0$. $\beta \cdot g(x) = g(x) + g(1/x)$. Note that we also have $g(1/x) = h(1/x) - h(x)$, then $\beta \cdot g(x) = (h(x) - h(1/x)) + (h(1/x) - h(x)) = 0$, or $g(x) \in \ker(\beta)$.
            \item $\ker(\beta) \subseteq \operatorname{Im}(\alpha)$: Consider $g(x) \in \ker(\beta)$, then $g(x) + g(1/x) = 0$. Take $h(x) = \frac{g(x)}{2}$, then $h(x) - h(1/x) = \frac{g(x)}{2} - \frac{g(1/x)}{2}$. Since $g(x) + g(1/x) = 0$, it means $g(x) = -g(1/x)$, thus $h(x) - h(1/x) = \frac{g(x)}{2} + \frac{g(x)}{2} = g(x)$, thus $g(x) \in \operatorname{Im}(\alpha)$, then, $g(x) \in \operatorname{Im}(\alpha)$.
        \end{enumerate}
        These implies $\operatorname{Im}(\alpha) \subseteq \ker(\beta)$ and $\ker(\beta) \subseteq \operatorname{Im}(\alpha)$, that is $\operatorname{Im}(\alpha) = \ker(\beta)$. %, hence the sequence is exact.
    \end{proof}

\begin{lem} Let $\alpha\in \Z[\N^\circ]$ and $f\in \Q(x)\setminus \Q$. If $\alpha f=0$ then  $\alpha=0$.
    
\end{lem}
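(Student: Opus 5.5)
Suppose $\alpha\neq 0$ and write $\alpha=\sum_{m\in S}c_m[m]$ with $S=\supp(\alpha)\subset\N$ finite and all $c_m\neq 0$. The plan is to look at a point where $f$ is not regular and find one term $f(x^m)$ whose singular part there cannot be cancelled by the other terms.

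Since $f\notin\Q$, it has a zero or a pole somewhere in $\mathbb{P}^1(\C)$. Replacing $f$ by $f-f(\xi)$ does not change the equation $\alpha f=0$ in a useful way unless $\alpha$ kills constants, so it is cleaner to use poles directly. A nonconstant rational function always has at least one pole in $\mathbb{P}^1(\C)$, possibly at $\infty$. I treat the two cases separately.

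\textbf{Pole at infinity.} Suppose $f$ has a pole of order $k\ge 1$ at $\infty$, so $f(x)\sim a x^{k}$ as $x\to\infty$ with $a\neq 0$. Then $f(x^m)\sim a x^{mk}$. Let $M=\max S$. The term $c_M f(x^M)$ has order $Mk$ at infinity, strictly larger than the order of every other term. Hence $\alpha f$ has a pole of order exactly $Mk$ at infinity, which contradicts $\alpha f=0$.

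\textbf{Pole at some $\xi\in\C$.} Otherwise $f$ has a pole at some finite $\xi\in\C$. I would choose such a pole of maximal modulus $|\xi|=r$, since a nonconstant $f$ regular at $\infty$ must have a finite pole.
\begin{enumerate}
\item If $r>1$, the poles of $f(x^m)$ are the $m$-th roots of poles of $f$, so they have modulus at most $r^{1/m}$. This bound is strictly decreasing in $m$, so the term with $m=\min S$ has a pole of largest modulus, namely some root $\eta$ with $\eta^{m}=\xi$.
\item If $r<1$, the same reasoning applies with the modulus minimized over nonzero poles, or with the roles of the extremes swapped.
\end{enumerate}
In either case one must check that no other term $f(x^{m'})$ has a pole at $\eta$. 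That would require $\eta^{m'}$ to be a pole of $f$, and $|\eta^{m'}|=r^{m'/m}$ lies outside the allowed range, since it exceeds $r$ when $r>1$ and $m'>m$.

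\textbf{The remaining case, all finite poles on the unit circle.} This is the main obstacle. When every finite pole $\xi$ has $|\xi|=1$ (or $\xi=0$), moduli give no separation. I would then order the poles of $f$ by their multiplicity. Take a pole $\xi$ of maximal order $k$ and look at $\eta$ with $\eta^M=\xi$, where $M=\max S$. Since $x^M$ is unramified at $\eta\neq 0$, the function $f(x^M)$ has a pole of order exactly $k$ at $\eta$.

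The danger is cancellation with a term $f(x^{m'})$ for which $\eta^{m'}$ is also a pole of order $k$. To prevent this, I would use the freedom in choosing $\eta$ among the $M$ roots of $\xi$, counting the roots that collide. A term with $m'<M$ can make $\eta^{m'}$ a pole for at most $m'\cdot(\#\text{poles})$ choices of $\eta$. The total is $\sum_{m'<M} m'\cdot P$, which can exceed $M$. So I would instead fix $\eta$ generic via an argument on arguments: choose $\xi=e^{2\pi i\theta}$ and pick $\eta=e^{2\pi i(\theta+j)/M}$.

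The condition that $\eta^{m'}$ is a pole involves finitely many $j$ modulo $M$, but this count is still problematic. If all the poles are roots of unity, then $f=\psi$-like cancellations are genuinely conceivable. In that case I expect to need the Laurent/residue structure more carefully, for example by comparing leading coefficients in $(x-\eta)^{-k}$. The coefficient of $(x-\eta)^{-k}$ in $f(x^{m})$ is $a_\xi/(m\eta^{m-1})^k$ summed over the $m$ with $\eta^m$ a pole of order $k$. Showing that this sum is nonzero for some choice of $\eta$ is the step I would work hardest on, likely by summing over all $M$-th roots $\eta$ weighted by suitable powers to isolate the $m=M$ term.

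A cleaner fallback is to evaluate $f$ at $0$ or $\infty$. If $f$ is regular at $0$ and at $\infty$, I would instead reduce to $f(0)$-expansions: write $f$ as a power series $\sum b_j x^j$ at $0$. Then $\alpha f=\sum_j b_j\sum_m c_m x^{mj}$, and the lowest $j\ge1$ with $b_j\neq0$ combined with $m=\min S$ gives the uniquely lowest exponent. This is a contradiction unless $f$ is constant near $0$. This power-series argument at $0$ (or at $\infty$ via $x\mapsto 1/x$, which commutes with $\Z[\N^\circ]$) is actually my primary route. A pole at $0$ is handled by a Laurent expansion in the same way, taking the most negative exponent together with $M=\max S$. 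The pole analysis above is only needed if one insists on avoiding expansions.
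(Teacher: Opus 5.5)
Your argument is correct. The part that actually proves the lemma is your Laurent-expansion route at $0$. Write $f=\sum_j b_jx^j\in\Q((x))$; the substitution $x\mapsto x^m$ is compatible with the embedding $\Q(x)\hookrightarrow\Q((x))$. If $f$ has a pole of order $k$ at $0$, the coefficient of $x^{-Mk}$ in $\alpha f$ is $c_Mb_{-k}\neq 0$, where $M=\max S$. If $f$ is regular at $0$, the coefficient of $x^{m_0j_0}$ is $c_{m_0}b_{j_0}\neq 0$, where $m_0=\min S$ and $j_0\geq 1$ is the least positive index with $b_{j_0}\neq 0$.

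This is the paper's dominant-term idea, moved from $\infty$ to $0$. The paper clears denominators and compares the degrees $d_m=\sum_n n\deg Q+m(\deg P-\deg Q)$. That settles $\deg P\neq\deg Q$ at once. When $\deg P=\deg Q$, however, the paper must subtract $c=\operatorname{lc}(P)/\operatorname{lc}(Q)$, run a second degree count to show the resulting constant $-c\,\alpha(1)$ vanishes, and then fall back on the first case. In your version the constant term $b_0\sum_m c_m$ sits at exponent $0<m_0j_0$ and never interferes, so that detour disappears. In exchange, the paper stays within polynomial degree bookkeeping and needs no series.

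Since the expansion at $0$ already covers every nonconstant $f$, you should drop the rest of the proposal. The assumed regularity at $\infty$ is never used, and the pole-at-$\infty$ case is redundant. The modulus-of-poles analysis is unfinished when the finite poles lie on the unit circle. There local cancellation genuinely occurs: for instance $(2[2]-[1])\frac{1}{1-x}=\frac{1}{1+x}$ has no pole at $x=1$. That part should not appear in a final write-up.
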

\begin{proof}
 Write
\( 
\alpha=\sum_{m\in S} a_m[m],
\)
where \(S=\supp(\alpha)\subset \mathbb N^\circ\) is finite and \(a_m\neq 0\) for all \(m\in S\).
We will show that \(\alpha f=0\) implies \(S=\varnothing\), hence \(\alpha=0\).

Write \(f=P/Q\), where \(P,Q\in \mathbb Q[x]\) are coprime and \(Q\neq 0\). Since \(f\notin \mathbb Q\), at least one of \(P,Q\) has positive degree. 

Suppose
\[
0=\alpha f(x)=\sum_{m\in S} a_m f(x^m)=\sum_{m\in S} a_m \frac{P(x^m)}{Q(x^m)}.
\]
Multiplying by
\( 
\prod\limits_{m\in S} Q(x^m),
\)
we get
\[
\sum_{m\in S}
a_m P(x^m)\prod_{\substack{n\in S\\ n\neq m}} Q(x^n)=0.
\]

 The summand corresponding to  \(m\) has a degree
\[
d_m:=m\deg P+\sum_{\substack{n\in S\\ n\neq m}} n\deg Q =\sum_{n\in S} n\deg Q + m(\deg P-\deg Q).
\]
Hence for $m,m'\in S$, we have
\[
d_m-d_{m'}=(m-m')(\deg P-\deg Q). 
\]
Suppose $\deg P\not=\deg Q$ then all  degrees $d_m$ are distinct. This forces $S$ is empty and $\alpha=0$.

 Assume now that \(\deg P=\deg Q\). Let
\(
c=\dfrac{\operatorname{lc}(P)}{\operatorname{lc}(Q)}
\)
and set
\(
R=P-cQ.
\) 
Then
\(
f=c+\frac{R}{Q}=c+g,
\)
and
\(
\deg R<\deg Q.
\).

Since \(\alpha f=0\), we have
\( 
\alpha\!\left(\frac{R}{Q}\right)
=
-c\,\alpha(1).
\)
The right-hand side is a constant, denoted $C\in \Q$. We obtain
\[
\sum_{m\in S} a_m \frac{R(x^m)}{Q(x^m)}=C.
\]

Multiplying by \(\prod_{m\in S}Q(x^m)\) yields
\[
\sum_{m\in S}
a_m R(x^m)
\prod_{\substack{n\in S\\ n\neq m}}
Q(x^n)
=
C\prod_{m\in S}Q(x^m).
\]

If $C\not=0$ then the degree of the right-hand side is
\(
\sum\limits_{m\in S} m\deg Q.
\)
On the other hand, each summand on the left has degree
\[
m\deg R+
\sum_{\substack{n\in S\\ n\neq m}}
n\deg Q
=
\sum_{n\in S} n\deg Q
-
m(\deg Q-\deg R).
\]
Since \(\deg R<\deg Q\), every summand on the left has degree strictly less than
\(\sum_{n\in S} n\deg Q\). Therefore \(C=0\), and hence
\(
\alpha g=0
\), where $g=\dfrac{R}{Q}$ with \(\deg R<\deg Q\). Note that  since $f$ is not a constant $g$ is also not a constant.
Thus we reduce to the previous case and hence  $\alpha=0$.
\end{proof}

\begin{cor} 
Let $f\in \Q(x)\setminus\Q $ and $\alpha\in \Z[\Z^\circ]$. If $\alpha f=0$ then  $\alpha$ is either a multiple of $[1]+[-1]$ or $[1]-[-1].$
    \end{cor}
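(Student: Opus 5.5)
Write $\alpha=A+Bg$ with $A,B\in S=\Z[\N^\circ]$, where $g=[-1]$, exactly as in the proof of \cref{thm:zero_divisors}. Since $g$ acts on $\Q(x)$ by $x\mapsto 1/x$, we have
\[
\alpha f = A f + B\,\tilde f, \qquad \tilde f(x):=f(1/x).
\]
Applying $g$ to the relation $\alpha f=0$ gives a second relation, $A\tilde f + B f = 0$, because $g$ commutes with $S$ (indeed $[-1][p]=[-p]=[p][-1]$). The plan is to eliminate $\tilde f$ and reduce to the preceding lemma, which says that a nonzero element of $\Z[\N^\circ]$ cannot annihilate a nonconstant rational function.

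Multiplying the first relation by $A$ and the second by $B$ and subtracting, commutativity of $S$ gives
\[
(A^2-B^2)f = 0 .
\]
Since $A^2-B^2\in \Z[\N^\circ]$ and $f\notin\Q$, the preceding lemma forces $A^2-B^2=0$. Because $S$ is an integral domain (Gilmer's result, as used in \cref{thm:zero_divisors}), it follows that $(A-B)(A+B)=0$, so $A=B$ or $A=-B$.

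If $A=B$, then $\alpha=A([1]+[-1])$. If $A=-B$, then $\alpha=A([1]-[-1])$. In either case $\alpha$ is a multiple of $[1]+[-1]$ or of $[1]-[-1]$, which is the claim. The case $\alpha=0$ is covered trivially, since $0$ is a multiple of both elements.

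The only delicate point is making sure the elimination stays inside $S$. We need $A,B$ to act on $\tilde f$ compatibly and to commute with each other; both hold because $\Z[\Z^\circ]$ is commutative and $S$ is a subring. With that checked, the preceding lemma applies directly to $A^2-B^2$, and no separate treatment of the degrees of $f$ and $\tilde f$ is needed.
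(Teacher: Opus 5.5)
Your argument is correct, but it follows a different route from the paper.

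You view $Af+B\tilde f=0$ and $A\tilde f+Bf=0$ as a $2\times 2$ linear system over the commutative ring $\Z[\N^\circ]$ and multiply by the adjugate to get $(A^2-B^2)f=0$. This is the determinant argument of \cref{thm:zero_divisors}, moved from the ring to the module $\Q(x)$.

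The paper instead diagonalizes the involution $[-1]$. It writes $f=f_++f_-$ with $[-1]f_\pm=\pm f_\pm$, so that $\alpha f=(A+B)f_++(A-B)f_-$. Applying $[-1]$ separates this into $(A+B)f_+=0$ and $(A-B)f_-=0$, and the lemma is then applied to whichever of $f_\pm$ is nonconstant.

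Your version is shorter and never needs $f_\pm$. It does use that $\Z[\N^\circ]$ is an integral domain to pass from $A^2=B^2$ to $A=\pm B$. That holds even without citing Gilmer, since $\Z[\N^\circ]$ is a polynomial ring in variables indexed by the primes. However, it yields only the dichotomy.

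The paper's route avoids the domain property and records which case occurs: $A=-B$ if $f_+$ is nonconstant, $A=B$ if $f_-$ is nonconstant, hence $\alpha=0$ if both are. This is exactly the information reused in the subsequent theorem describing $\Ann_{\Z[\Z^\circ]}(f)$ case by case.
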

\begin{proof}
Write
\(
\alpha=A+[-1]B,
\)
where \(A,B\in \mathbb Z[\mathbb N^\circ]\).

Define
\[
f_{+}(x)=\dfrac{f(x)+f(1/x)}{2}, \qquad
f_{-}(x)=\dfrac{f(x)-f(1/x)}{2}.
\]
Note that $f_{+}(1/x)=f_+(x)$ and $f_{-}(1/x)=-f_{-}(x)$, i.e, $[-1]f_{+}=f_{+}$, and  $[-1]f_{-}=-f_{-}$.

 We have
\[
0=\alpha f=\alpha f_{+}+\alpha f_{-}=(A+[-1]B)f_{+} +(A+[-1]B)f_{-}=(A+B)f_{+}+(A-B)f_{-}.
\]

Applying \([-1]\) to this equation gives
\[
(A+B)f_{+}-(A-B)f_{-}=0.
\]
Adding and subtracting the two equations yields
\[
(A+B)f_{+}=0,
\qquad
(A-B)f_{-}=0.
\]

Since \(f\notin \mathbb Q\), at least one of \(f_{+}\) and \(f_{-}\) is {non-constant}. By Lemma~A.4, if
\(f_{+}\) is non-constant, then \(A+B=0\), and therefore
\[
\alpha=A+[-1]B=A-[-1]A=A([1]-[-1]).
\]
Similarly, if \(f_{-}\) is non-constant, then \(A-B=0\), and therefore
\[
\alpha=A+[-1]A=A([1]+[-1]).
\]

Thus \(\alpha\) is a multiple of either \([1]-[-1]\) or \([1]+[-1]\).
\end{proof}

Let $\epsilon$ be the homomorphism $\epsilon\colon \Z[\N^\circ]\to \Z$, which sends $\alpha=\sum_{s\in {\supp(\alpha)}} n_a[a]$ to $\epsilon(\alpha)= \sum_{s\in \supp(\alpha)} n_a.$

\begin{thm}
Let $f\in \Q(x)\setminus \Q$. Write
\[
f=f_+ + f_-,
\qquad
f_+(x)=\frac{f(x)+f(1/x)}2,\quad
f_-(x)=\frac{f(x)-f(1/x)}2.
\]
 Then one of the following cases occurs:
\begin{enumerate}
    \item If $f_+=0$, then     \(    \Ann_{\Z[\Z^\circ]}(f)=\langle [1]+[-1]\rangle.    \)

    \item If $f_+$ is a nonzero constant, then     \(     \Ann_{\Z[\Z^\circ]}(f)=\ker(\epsilon)\langle [1]+[-1]\rangle.    \)

    \item If $f_-=0$, then     \(     \Ann_{\mathbb Z[\mathbb Z^\circ]}(f)=\langle [1]-[-1]\rangle.    \)

    \item If $f_-$ is a nonzero constant, then     \(    \Ann_{\Z[\Z^\circ]}(f)=\ker(\epsilon)\langle [1]-[-1]\rangle.     \)

    \item If $f_+$ and $f_-$ are both nonconstant, then     \(    \Ann_{\Z[\Z^\circ]}(f)=0.    \)
\end{enumerate}
\end{thm}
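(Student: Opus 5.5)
The plan is to reuse the decomposition from the proof of the preceding Corollary and then apply Lemma~A.4, together with a short computation for constant functions.

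Write $\alpha\in\Z[\Z^\circ]$ uniquely as $\alpha=A+[-1]B$ with $A,B\in\Z[\N^\circ]$. Since $[-1]f_\pm=\pm f_\pm$ and $R$ is commutative, the argument of the Corollary gives
\[
\alpha f=(A+B)f_+ +(A-B)f_- .
\]
Applying $[-1]$ gives $\alpha f=0$ if and only if $(A+B)f_+=0$ and $(A-B)f_-=0$; the converse direction is immediate.

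So everything reduces to the following claim: for $C\in\Z[\N^\circ]$ and $h\in\Q(x)$, when is $Ch=0$? There are three regimes.
\begin{enumerate}
\item If $h$ is nonconstant, Lemma~A.4 forces $C=0$.
\item If $h=c$ is a nonzero constant, then $Ch=\epsilon(C)c$, so $Ch=0$ if and only if $C\in\ker(\epsilon)$.
\item If $h=0$, every $C$ works.
\end{enumerate}
Since $f\notin\Q$, the functions $f_+$ and $f_-$ cannot both be constant. Hence the five listed cases exhaust all possibilities, and in cases (1)--(4) the other component is nonconstant.

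\emph{Case (1).} Here $f_+=0$ and $f_-$ is nonconstant, so the condition is exactly $A=B$. Then $\alpha=A([1]+[-1])$ with $A\in\Z[\N^\circ]$ arbitrary, which lies in $\langle[1]+[-1]\rangle$. Conversely, $([1]+[-1])f=([1]+[-1])f_-=0$, so the ideal is contained in the annihilator.

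\emph{Case (2).} Here $f_+=c\neq0$, so we need $A=B$ and $\epsilon(2A)=0$, i.e.\ $A\in\ker(\epsilon)$. This gives $\Ann(f)=\{A([1]+[-1]) : A\in\ker\epsilon\}$. It remains to identify this set with the ideal $\ker(\epsilon)\langle[1]+[-1]\rangle$. The key observation is $[-1]([1]+[-1])=[1]+[-1]$, which gives
\[
R([1]+[-1])=\Z[\N^\circ]([1]+[-1]).
\]
Since $\ker\epsilon$ is an ideal of $\Z[\N^\circ]$, the product ideal is then exactly $\{A([1]+[-1]):A\in\ker\epsilon\}$.

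\emph{Cases (3) and (4).} These are symmetric to (1) and (2), with $A=-B$ and $[1]-[-1]$ in place of $[1]+[-1]$. The relevant identity is $[-1]([1]-[-1])=-([1]-[-1])$.

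\emph{Case (5).} Both components are nonconstant, so $A+B=0$ and $A-B=0$. Since $\Z[\N^\circ]$ is torsion-free, $A=B=0$ and hence $\Ann(f)=0$.

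The main obstacle is bookkeeping rather than substance. One must interpret $\ker(\epsilon)\langle[1]\pm[-1]\rangle$ correctly, given that $\epsilon$ is only defined on $\Z[\N^\circ]$, and verify that this product ideal really coincides with the explicit set above. The absorption identities $[-1]([1]\pm[-1])=\pm([1]\pm[-1])$ handle exactly this point.
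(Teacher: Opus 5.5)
Your proposal is correct and follows the paper's proof: write $\alpha=A+[-1]B$, reduce $\alpha f=0$ to $(A+B)f_+=(A-B)f_-=0$, and settle each case using Lemma A.4 for the nonconstant part and $\epsilon$ for the constant part. Your absorption identities $[-1]([1]\pm[-1])=\pm([1]\pm[-1])$ make explicit the identification of $\ker(\epsilon)\langle[1]\pm[-1]\rangle$ with $\{A([1]\pm[-1]) : A\in\ker\epsilon\}$, which the paper leaves implicit; also, case (4) is vacuous, since $f_-(1/x)=-f_-(x)$ forces any constant $f_-$ to be $0$.
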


\begin{proof}
    Let $\alpha\in \Z [\Z^\circ]$ and write \(\alpha=A+[-1]B,\) where \(A,B\in \mathbb Z[\mathbb N^\circ]\). As in the proof of Corollary A.5, we see that 
    $\alpha \in \Ann_{\Z[\Z^\circ]}(f) $ if and only if
    \[
    (A+B)f_+=(A-B)f_- =0.
    \]

    (1) Suppose $f_+=0$. This implies that $f_-$ is nonconstant. By Lemma A.4,  the condition $(A-B)f_-=0$ is equivalent to $A-B=0$, i.e, $\alpha=A([1]+[-1])$. Hence \( \Ann_{\Z[\Z^\circ]}(f)=\langle [1]+[-1]\rangle.  \)

    (2) Suppose $f_+=c$, where $c$ is a nonzero constant. This implies that $f_-$ is nonconstant. By Lemma A.4,  the condition $(A-B)f_-=0$ is equivalent to $A-B=0$. And the condition $0=(A+B)f_+=2A(c)=2c\epsilon(A)$ is equivalent to $A\in \ker\epsilon$. Hence \( \Ann_{\Z[\Z^\circ]}(f)=\ker(\epsilon)\langle [1]+[-1]\rangle.  \)
    
    Case (3) and Case (4) can be proved by similar arguments as in case (1) and case (2).

    (5) Suppose $f_+$ and $f_-$ are both nonconstant.  By Lemma A.4,  the condition $(A+B)f+=(A-B)f_- = 0$ is equivalent to $A+B=A-B=0$. This latter condition is equivalent to $A=B=0$. Hence   \(    \Ann_{\Z[\Z^\circ]}(f)=0.    \)
    
\end{proof}

%\section{Abstract version of Section 4}

%\bibliographystyle{amsplain}

%\bibliography{references.bib}

\end{document}